\documentclass[11pt]{amsart}

\usepackage{amscd,latexsym,amsthm,amsfonts,amssymb,amsmath,amsxtra}
\usepackage[mathscr]{eucal}
\usepackage[pagebackref,colorlinks=true,linkcolor=blue,citecolor=blue]{hyperref}
\usepackage{xcolor}

\renewcommand\theequation{\thesection.\arabic{equation}}

\newcommand{\BC}{{\mathbb {C}}}

\newcommand{\BQ}{{\mathbb {Q}}}

\newcommand{\CH}{{\mathcal {H}}}

\newcommand{\CN}{{\mathcal {N}}}
\newcommand{\CO}{{\mathcal {O}}}

\newcommand{\CS}{{\mathcal {S}}}

\newcommand{\RG}{{\mathrm {G}}}

\newcommand{\RO}{{\mathrm {O}}}

\newcommand{\cod}{{\mathrm{cod}}}

\newcommand{\disc}{{\mathrm{disc}}}

\newcommand{\Gal}{{\mathrm{Gal}}}
\newcommand{\GL}{{\mathrm{GL}}}

\newcommand{\SL}{{\mathrm{SL}}}

\newcommand{\SO}{{\mathrm{SO}}}

\newcommand{\Sp}{{\mathrm{Sp}}}

\newcommand{\tr}{{\mathrm{tr}}}

\newcommand{\wt}{\widetilde}

\newcommand{\ul}{\underline}

\newtheorem{thm}{Theorem}[section]

\newtheorem{lem}[thm]{Lemma}
\newtheorem{prop}[thm]{Proposition}
\newtheorem {conj}[thm]{Conjecture}

\newtheorem {ques/conj}[thm]{Question/Conjecture}

\newtheorem{defn}[thm]{Definition}
\newtheorem{rmk}[thm]{Remark}

\makeatletter

\newcommand{\Rmnum}[1]{\expandafter\@slowromancap\romannumeral #1@}
\makeatother
\RequirePackage{xcolor} 
\providecommand{\DIFdeltex}[1]{} 
\providecommand{begin}{} 
\providecommand{end}{} 
\providecommand{FL}[1]{{#1}} 
\providecommand{\DIFdelFL}[1]{} 
\makeatletter 
\def\MATHBLOCKDOLLARDOLLAR[#1]]#2{$$#1]#2$$} 
\makeatother 
\RequirePackage{listings} 
\lstdefinelanguage{DIFcode}{ 
  moredelim=[il][\color{white}\tiny]{\%DIF\ <\ }, 
  moredelim=[il][\sffamily\bfseries]{\%DIF\ >\ } 
} 
\lstdefinestyle{DIFverbatimstyle}{ 
	language=DIFcode, 
	basicstyle=\ttfamily, 
	columns=fullflexible, 
	keepspaces=true 
} 
\lstnewenvironment{DIFverbatim}[1][]{\lstset{style=DIFverbatimstyle,#1}}{} 
\lstnewenvironment{DIFverbatim*}[1][]{\lstset{style=DIFverbatimstyle,showspaces=true,#1}}{} 
\begin{document}
\renewcommand{\theequation}{\arabic{equation}}
\numberwithin{equation}{section}

\title[Jiang's wavefront sets conjecture v3]{On Jiang's wavefront sets conjecture for representations in local Arthur packets}

\author[Baiying Liu]{Baiying Liu}
\address{Department of Mathematics\\
Purdue University\\
West Lafayette, IN, 47907, USA}
\email{liu2053@purdue.edu}

\author[Freydoon Shahidi]{Freydoon Shahidi}
\address{Department of Mathematics\\
Purdue University\\
West Lafayette, IN, 47907, USA}
\email{freydoon.shahidi.1@purdue.edu}

\subjclass[2000]{Primary 11F70, 22E50; Secondary 11F85, 22E55}



\keywords{Admissible Representations, Local Arthur Packets, Local Arthur Parameters, Nilpotent Orbits, Wavefront Sets}

\thanks{The research of the first named author is partially supported by the NSF Grants DMS-1702218, DMS-1848058, and by the Simons Foundation: Travel Support for Mathematicians. The research of the second named author is partially supported by the NSF Grants DMS-1801273, DMS-2135021}

\begin{abstract}
This paper serves as an attempt towards the Jiang conjecture on the upper bound nilpotent orbits in the wavefront sets of representations in local Arthur packets of quasi-split classical groups, which is a natural generalization of the well-known Shahidi conjecture, reflecting the relation between the structure of wavefront sets and the local Arthur parameters.
Applying the character identities of local Arthur packets and a matching method, we reduce the study of the upper bound to certain properties of the wavefront sets of the corresponding bi-torsor representations of general linear groups, which is implied by a recent result of Atobe and Ciubotaru for split classical groups when the residue characteristic is large.
\end{abstract}

\maketitle


\section{Introduction}

Let $F$ be a non-Archimedean local field. Let $\mathrm{G}_n=\Sp_{2n}, \SO_{2n+1}, \SO_{2n}^{\alpha}$ be quasi-split classical groups, where $\alpha$ is a square class in $F$, and let $G_n=\mathrm{G}_n(F)$. 
Here, we identify a square class with the corresponding quadratic character of the Weil group $W_F$ via the local class field theory.
Then the Langlands dual groups are 
$$\widehat{\mathrm{G}}_n(\BC) = \SO_{2n+1}(\BC), \Sp_{2n}(\BC), \SO_{2n}(\BC).$$
Let ${}^L\mathrm{G}_n$ be the $L$-group of $G_n$,
$${}^L\mathrm{G}_n= 
\begin{cases}
\widehat{\mathrm{G}}_n(\BC) & \text{ when } \mathrm{G}_n=\Sp_{2n}, \SO_{2n+1},\\
\SO_{2n}(\BC) \rtimes W_F & \text{ when } \mathrm{G}_n=\SO_{2n}^{\alpha}.
\end{cases}
$$ 
In his fundamental work \cite{Art13}, Arthur introduced the local Arthur packets which are finite sets of representations of $G_n$, parameterized by local Arthur parameters. Local Arthur parameters are defined as
a direct sum of irreducible representations
$$\psi: W_F \times \SL_2(\mathbb{C}) \times \SL_2(\mathbb{C}) \rightarrow {}^L\mathrm{G}_n$$
\begin{equation}\label{lap}
  \psi = \bigoplus_{i=1}^r \phi_i \otimes S_{m_i} \otimes S_{n_i},  
\end{equation}
satisfying the following conditions:

(1) $\phi_i(W_F)$ is bounded and consists of semi-simple elements, and $\dim(\phi_i)=k_i$;

(2) the restrictions of $\psi$ to the two copies of $\SL_2(\mathbb{C})$ are analytic, $S_k$ is the $k$-dimensional irreducible representation of $\SL_2(\mathbb{C})$, and 
$$\sum_{i=1}^r k_im_in_i = N=N_n:= 
\begin{cases}
2n+1 & \text{ when } \mathrm{G}_n=\Sp_{2n},\\
2n & \text{ when } \mathrm{G}_n=\SO_{2n+1}, \SO_{2n}^{\alpha}.
\end{cases}
$$ 
Assuming the Ramanujan conjecture, Arthur (\cite{Art13}) showed that these local Arthur packets characterize the local components of square-integrable automorphic representations.
For $1 \leq i \leq r$, let $a_i=k_im_i$, $b_i=n_i$. 
Let 
$$\ul{p}(\psi) = [b_1^{a_1} b_2^{a_2} \cdots b_r^{a_r}]$$
be a partition of $N$, where without loss of generality, we assume that $b_1 \geq b_2 \geq \cdots \geq b_r$.
$\psi$ is called tempered or generic if for all $1 \leq i \leq r$, $b_i=1$. 

For each local Arthur parameter $\psi$, Arthur associated a local $L$-parameter $\phi_{\psi}$ as follows
\begin{equation}\label{apequ1}
\phi_{\psi}(w, x) = \psi\left(w, x, \begin{pmatrix}
        |w|^{\frac{1}{2}} & 0 \\
        0 & |w|^{-\frac{1}{2}}\\
\end{pmatrix}\right).
\end{equation}
Note that for any local Arthur parameter $\phi \otimes S_m \otimes S_n$,
$$\phi(w) \otimes S_m (x) \otimes S_n \left(\begin{pmatrix}
        |w|^{\frac{1}{2}} & 0 \\
        0 & |w|^{-\frac{1}{2}}\\
\end{pmatrix}
\right) = \bigoplus_{j=-\frac{n-1}{2}}^{\frac{n-1}{2}} |w|^j\phi(w) \otimes S_m (x)
.$$
Arthur also showed that $\psi \mapsto \phi_{\psi}$ is injective.
Let $\pi_{\psi}$ be the representation of $\GL_{N}(F)$ corresponding to $\phi_{\psi}$ via local Langlands correspondence, which is unitary and self-dual, and let $\wt{\pi}_{\psi}$ be its canonical extension to the bitorsor $\wt{\GL}_{N}(F) = {\GL}_{N}(F) \rtimes \wt{\theta}(N)$. See Section \ref{characlap} for the definition of  $\wt{\theta}(N)$, and for the description of $\wt{\pi}_{\psi}$ and the local Arthur packet $\wt{\Pi}_{\psi}$. 

Given a local Arthur parameter $\psi$ as in \eqref{lap}, in a series of papers (\cite{Moe06a, Moe06b, Moe09, Moe10, Moe11}), M{\oe}glin explicitly constructed each local Arthur packet $\wt{\Pi}_{\psi}$ and showed that it is of multiplicity free.
In particular, M{\oe}glin (\cite[Corollaire 4.2]{Moe09}) showed $\wt{\Pi}_{\psi_1} \cap \wt{\Pi}_{\psi_2} \neq \emptyset$ only if $\psi_1^{\Delta}=\psi_2^{\Delta}$, where
$\psi_i^{\Delta}$ is a tempered $L$-parameter, called the diagonal restriction of $\psi_i$, which is defined as 
\begin{align*}\label{def diag rest}
\begin{split}
    \psi_i^{\Delta}:  W_F \times \SL_2(\BC) & \rightarrow W_F \times \SL_2(\BC) \times \SL_2(\BC)\\
    (w,x) & \mapsto (w,x,x).
\end{split}
\end{align*}
Then, Xu (\cite{Xu17}) gave an algorithm to determine whether the representations in M{\oe}glin's construction are nonzero, and 
Atobe (\cite{Ato20}) gave a refinement on the M{\oe}glin's construction, using the new derivatives introduced by himself and M\'inguez (\cite{AM20}), which makes it relatively easier to compute the $L$-data. Recently, Atobe (\cite{Ato23}), Hazeltine, the first named author and Lo (\cite{HLL22}) independently studied the intersection problem of local Arthur packets for symplectic and split special odd  orthogonal groups, which is considered as a key step towards the local  non-tempered Gan-Gross-Prasad problem (see \cite[Conjecture 7.1, Remark 7.3]{GGP20}).

Given an irreducible representation $\pi$ of $G_n$, one important invariant is a set $\frak{n}(\pi)$ which is defined to be all the $F$-rational nilpotent orbits $\CO$ in the Lie algebra $\frak{g}_n$ of $G_n$ such that the coefficient $c_{\CO}(\pi)$ in the Harish-Chandra-Howe local expansion of the character $\Theta(\pi)$ of $\pi$ is nonzero (see \cite{HC78} and \cite{MW87}). Since nilpotent orbits $\CO$ of $G_n$ are parametrized by data $(\ul{p}, \ul{q})$, where $\ul{p}$ is partition of $2n$ (or $2n+1$ when $\mathrm{G}_n=\SO_{2n+1}$) and $\ul{q}$ is certain non-degenerate quadratic form (\cite[Section I.6]{Wal01}), we can define another important invariant $\frak{p}(\pi)$ for $\pi$ which is the set of partitions corresponding to $\frak{n}(\pi)$. Then, one can define $\frak{n}^m(\pi)$ and $\frak{p}^m(\pi)$ to be the maximal elements in $\frak{n}(\pi)$ and $\frak{p}(\pi)$, with respect to the Zariski closure ordering and the dominance ordering, respectively. We call $\frak{n}^m(\pi)$ the wavefront set and $\frak{p}^m(\pi)$ the wavefront partitions of $\pi$, respectively. 

It is a very interesting question to characterize the sets $\frak{n}(\pi)$, $\frak{p}(\pi)$, $\frak{n}^m(\pi)$, and $\frak{p}^m(\pi)$. For reductive groups, Moeglin (\cite{Moe96}) shows that nilpotent orbits in $\frak{n}^m(\pi)$ are  admissible, and Gomez-Gourevitch-Sahi (\cite{GGS21}) proves that nilpotent orbits in $\frak{n}^m(\pi)$ are  quasi-admissible; for definitions of admissibility and quasi-admissibility, see (\cite{GGS21}). In general, it is expected that nilpotent orbits in $\frak{n}^m(\pi)$ are special which are corresponding to special representations of Weyl groups via Springer correspondence. It is known that admissible nilpotent orbits of classical groups are exactly the special ones. Hence, Moeglin's result (\cite{Moe96}) implies that for classical groups, nilpotent orbits in $\frak{n}^m(\pi)$ are special. In \cite{JLS16}, the authors checked the
non-special orbits for exceptional groups; it turns out that all but 9 of them can not be in $\frak{n}^m(\pi)$ for any $\pi$, while the 9 left are still expected not to be in $\frak{n}^m(\pi)$ for any $\pi$. 


In general, characterization of the set $\frak{n}^m(\pi)$ is still widely open, though some cases are known. Examples are representations of $\GL_n(F)$, irreducible subquotients for regular principal series of $G_n$ (\cite{MW87}), tempered and anti-tempered unipotent representations of pure inner twists of $\SO_{2n+1}(F)$ (\cite{Wal18, Wal20}), certain unramified representations of split connected reductive $p$-adic groups (\cite{Oka21}), and irreducible Iwahori-spherical representations
of split connected reductive $p$-adic groups
with ``real infinitesimal characters" (\cite{CMBO21, CMBO24, CMBO25}).
It is worths to remark that recently, Tsai (\cite{Tsa24}) constructed an example of representations of $U_7(\mathbb{Q}_3)$ showing that the wavefront set $\frak{p}^m(\pi)$ may not be a singleton.

Given any local Arthur parameter $\psi$ as in \eqref{lap}, the set $\frak{p}^m({\pi}_{\psi})$ can be described as follows. See Section \ref{characlap} for details. 

\begin{thm}\label{wfslinear}
$$\frak{p}^m({\pi}_{\psi})=\{\ul{p}(\psi)^t\}.$$
\end{thm}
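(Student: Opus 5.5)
We compute the wavefront set of $\pi_\psi$ by combining its explicit description as a Langlands quotient of a standard module with the Mœglin--Waldspurger theory for $\GL_N(F)$.

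\emph{Step 1: identify $\pi_\psi$.} By \eqref{apequ1}, $\phi_\psi$ is the direct sum over $i$ of the blocks $|\cdot|^{j}\phi_i\otimes S_{m_i}$, with $j=-\frac{n_i-1}{2},\dots,\frac{n_i-1}{2}$. Under the local Langlands correspondence for $\GL$, each summand $\phi_i\otimes S_{m_i}$ corresponds to a discrete series $\delta_i=\delta(\rho_i,m_i)$ of $\GL_{a_i}(F)$, where $\rho_i$ is the supercuspidal attached to $\phi_i$ and $a_i=k_im_i$. Grouping the twists $|\cdot|^j\delta_i$ for a fixed $i$ gives a Speh-type segment. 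Thus $\pi_{\psi,i}$, the representation attached to $\phi_i\otimes S_{m_i}\otimes S_{n_i}$, is the Speh representation $u(\delta_i,n_i)$: the unique irreducible quotient of
$$|\cdot|^{\frac{n_i-1}{2}}\delta_i\times\cdots\times|\cdot|^{-\frac{n_i-1}{2}}\delta_i,$$
which is unitary. Since $\pi_\psi$ is unitary, $\pi_\psi\cong u(\delta_1,n_1)\times\cdots\times u(\delta_r,n_r)$, and this product is irreducible by Bernstein's irreducibility of unitary parabolic induction for $\GL_N$ (Tadić's classification).

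\emph{Step 2: wavefront sets are additive under induction.} For induced representations of $\GL$, the Harish-Chandra--Howe character expansion of an induced representation is the induced expansion. This is the van Dijk formula, used in \cite{MW87}. Induction of nilpotent orbits from a Levi $\GL_{a_1n_1}\times\cdots\times\GL_{a_rn_r}$ sends a tuple of orbits with partitions $\ul{q}_i$ to the orbit with partition the columnwise sum, i.e.\ $\left(\sum_i \ul{q}_i^{t}\right)^t$, where the sum is the union of the transposes. Also, every nilpotent orbit of $\GL_N(F)$ is stable, determined by its partition. Hence
$$\frak{p}^m(\pi_\psi)=\Big\{\big(\textstyle\bigsqcup_i \ul{q}_i^t\big)^t\Big\},\qquad \frak{p}^m(u(\delta_i,n_i))=\{\ul{q}_i\}.$$
It therefore suffices to show $\frak{p}^m(u(\delta_i,n_i))=\{[n_i^{a_i}]^t\}=\{[a_i^{n_i}]\}$. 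Then $\bigsqcup_i\ul{q}_i^t=[n_1^{a_1}\cdots n_r^{a_r}]=\ul{p}(\psi)$, and the result is $\ul{p}(\psi)^t$, as claimed.

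\emph{Step 3: the Speh case.} This step is the main input. By \cite{MW87}, for an irreducible representation of $\GL_N(F)$ given by its Zelevinsky multisegment, the maximal orbit in the wavefront set is unique, and its partition is read off from the multisegment. Concretely, it is the transpose of the partition formed by the segment lengths, with segment lengths weighted by $\deg\rho$. Equivalently, it is the maximal degenerate Whittaker model, which equals the partition given by the Mœglin--Waldspurger/Zelevinsky derivative data. For $u(\delta_i,n_i)$, the Zelevinsky multisegment consists of $k_im_i/k_i=m_i$ segments, each of length $n_i$ in units of $\rho_i$ (columns versus rows of the Speh rectangle). The highest derivative is iterated $n_i$ times, each time of order $a_i$. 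This gives the partition $[a_i^{n_i}]$.

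The obstacle is bookkeeping in this last step: getting the Zelevinsky-versus-Langlands conventions right, since it is exactly transposition that distinguishes $[a_i^{n_i}]$ from $[n_i^{a_i}]$. I would check it on two extreme cases. For $n_i=1$ the representation is generic, with partition $[a_i]$. For $m_i=k_i=1$ it is a character of $\GL_{n_i}$, with partition $[1^{n_i}]$. Both agree with $[a_i^{n_i}]$.
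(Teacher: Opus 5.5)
Your proposal is correct and follows essentially the same route as the paper. Both arguments write $\pi_\psi=\times_{i}\zeta(\Delta_i,b_i)$ as a product of Speh representations, use $\frak{p}^m(\zeta(\Delta_i,b_i))=\{[a_i^{b_i}]\}$, and apply \cite{MW87} with \cite[Lemma 7.2.5]{CM93} to get the induced orbit $[a_1^{b_1}]+\cdots+[a_r^{b_r}]=\ul{p}(\psi)^t$. Your version also spells out two steps the paper asserts without comment: the irreducibility of the product (via Bernstein), and the Speh partition via iterated highest derivatives, which the paper later attributes to \cite[Theorem 5]{CFK18}.
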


Taking the character expansion for the representation $\wt{\pi}_{\psi}$ of the bitorsor $\widetilde{\mathrm{GL}}_N(F)$
at the element 
\begin{equation}\label{thetaGn}
    \theta_{\widehat{\mathrm{G}}_n}= {1 } \rtimes \wt{\theta}(N) \in \wt{\GL}_{N}(F), 
\end{equation}
(see \cite{Clo87}, also see \cite[Theorem 3.2]{Kon02} and \cite[Theorems 4.20, 4.23]{Var17}), 
 we can define the sets $\frak{n}^m(\widetilde{\pi}_{\psi})$ and $\frak{p}^m(\widetilde{\pi}_{\psi})$ 
similarly. 
 {Let
}\begin{equation}{\label{eq:G-theta}
\RG_n^{\theta}:=
\begin{cases}
\SO_{2n+1}, & \text{ when } \mathrm{G}_n=\Sp_{2n},\\
\Sp_{2n}, & \text{ when } \mathrm{G}_n=\SO_{2n+1}, \SO_{2n}^{\alpha}.
\end{cases}
}\end{equation}
{Then } the connected component of the stabilizer of $\theta_{\widehat{\mathrm{G}}_n}$ in $\wt{\mathrm{GL}}_N(F)$ is  {$\RG_n^{\theta}(F)$, } and  $\frak{n}^{m}(\widetilde{\pi}_{\psi})$ consists of $F$-rational nilpotent orbits in the Lie algebra of  {$\RG_n^{\theta}(F)$} .
Then we have the following conjecture with respect to the set 
$\frak{p}^m(\widetilde{\pi}_{\psi})$.

\begin{conj}\label{wfsbitorsor}
For any $\ul{p} \in \frak{p}^m(\wt{\pi}_{\psi})$, 
 $$\dim_{\frak{g}_n^{\theta}}(\ul{p})\leq  \dim_{\frak{g}_n^{\theta}}((\ul{p}(\psi)^t)_{\RG_n^{\theta}}), $$
where $\frak{g}_n^{\theta}$ is the Lie algebra of $\RG_n^{\theta}$, 
and $(\ul{p}(\psi)^t)_{\RG_n^{\theta}}$ is the $\RG_n^{\theta}$ -collapse of the partition 
$\ul{p}(\psi)^t$,  namely, the largest $\RG_n^{\theta}$-partition smaller than or equal to $\ul{p}(\psi)^t$. 
\end{conj}

We also believe that the following stronger conjecture holds. 

\begin{conj}\label{wfsbitorsor2}
 $$\frak{p}^m(\wt{\pi}_{\psi})= {\{(}\ul{p}{(\psi)^t)_{\RG_n^{\theta}}\}.} $$
\end{conj}

\begin{rmk}\label{rmktoconjecture1.3}

(1).  {When $(\ul{p}(\psi)^t)_{\RG_n^{\theta}}=\ul{p}(\psi)^t$} , i.e., $\ul{p}(\psi)^t$ is already a  {$\RG_n^{\theta}$} -partition, by
Theorem \ref{wfslinear},  
\cite[Theorem 4.1 (1)]{Kon02}, and \cite[Lemma 5.29]{Var17}, for any $\ul{p} \in \frak{p}^m(\wt{\pi}_{\psi})$, 
 $\ul{p}\leq  \ul{p}(\psi)^t_{\RG_n^{\theta}}$, hence, Conjecture \ref{wfsbitorsor} holds. 
This covers a large family of local Arthur parameters. Note that 
$\ul{p}(\psi)$ is automatically a  {partition of the appropriate type for the Arthur parameter, and } by \cite[Proposition 6.3.7, Theorem 6.3.11]{CM93},  {the condition that } $\ul{p}(\psi)^t$ is  {already a $\RG_n^{\theta}$} -partition  {can be checked explicitly from } $\ul{p}(\psi)$ .

When  {$(\ul{p}(\psi)^t)_{\RG_n^{\theta}}\neq \ul{p}(\psi)^t$} , i.e., $\ul{p}(\psi)^t$ is not a  {$\RG_n^{\theta}$} -partition, Conjecture \ref{wfsbitorsor} and \ref{wfsbitorsor2} are expected to be more complicated.

(2). Assume that $\psi$ is tempered, that is for all $1 \leq i \leq r$, $b_i=1$. Then $\ul{p}_{\psi}^t=[N]$.
When $\RG_n=\SO_{2n+1}, \Sp_{2n}$, 
Conjecture \ref{wfsbitorsor2}, hence Conjecture \ref{wfsbitorsor}, has been proved by \cite[Theorem 4.1 (1)]{Kon02} and by \cite[Corollary 6.16]{Var17}. 
When, $\mathrm{G}_n= \SO_{2n}^{\alpha}$, Conjecture \ref{wfsbitorsor2}, hence Conjecture \ref{wfsbitorsor}, has been proved  by \cite[Corollary 6.16]{Var17}.
\end{rmk}



In this paper, we focus on studying the structure of $\frak{p}^m(\pi)$ for irreducible representations $\pi$ of $G_n$ in a local Arthur packet $\widetilde{\Pi}_{\psi}$. It is known that for tempered local Arthur parameters, namely, $b_i=1$ for all $i$, the local Arthur packet is exactly the $L$-packet $\Pi_{\phi_{\psi}}$ corresponding the $L$-parameter $\phi_{\psi}$. For tempered $L$-packets, the second named author has the following conjecture in general.

\begin{conj}[Shahidi Conjecture]\label{shaconj}
For any quasi-split reductive group $G$, tempered $L$-packets have generic members. 
\end{conj}

This conjecture has been proved for quasi-split classical groups in \cite[Proposition 8.3.2]{Art13}, \cite[Corollary 9.2.4]{Mok15}, based on the global Langlands functoriality \cite{CKPSS04, CPSS11}, \cite{KK04, KK05},  and the automorphic descent \cite{GRS11}; see also the work of \cite{JNQ10}, \cite{JS12}, and \cite{ST15}, via the method of local descent.  Motivated by \cite[Theorem 6.2 and Conjecture 6.5]{Sha11}, Conjecture \ref{shaconj} can be enhanced as follows.

\begin{conj}[Enhanced Shahidi Conjecture]\label{shaconj2}
For any quasi-split reductive group $G$, local Arthur packets are tempered if and only if they have generic members.  
\end{conj}

We prove the enhanced Shahidi conjecture assuming Conjecture \ref{wfsbitorsor} (see Theorem \ref{mainintro}(3-a) below); and unconditionally when $p$ is large and $G$ is split (see Theorem \ref{mainintro}(2) below). We remark that for symplectic and split special odd  orthogonal groups, Hazeltine, the first named author and Lo (\cite{HLL22}) have proved Conjecture \ref{shaconj2} without any assumption, using Atobe's refinement on M{\oe}glin's construction of local Arthur packets, which is a different method from that of this paper. Hazeltine, the first named author, Lo, and Zhang (\cite{HLLZ22}) provided a framework towards Conjecture \ref{shaconj2} for general connected reductive groups, assuming the closure ordering conjecture of local Arthur packets. 

The main goal of this paper is to consider the following conjecture of Jiang which is a natural generalization of the Shahidi conjecture above from tempered local Arthur packets to non-tempered ones, on the characterization of the set $\frak{p}^m(\pi)$ for $\pi$ in local Arthur packets. Note that for a generic representation $\pi$, $\frak{p}^m(\pi)$ contains only regular nilpotent orbits. The global version of this conjecture can be found in \cite[Conjecture 4.2]{Jia14}. We now state Jiang's conjecture as follows. 

\begin{conj}[Jiang Conjecture]\label{cubmfclocal}
Let $\psi$ be a local Arthur parameter of $G_n$ as in \eqref{lap}, and let $\wt{\Pi}_{\psi}$ be the local Arthur packet attached to $\psi$. Then the following hold.
\begin{enumerate}
\item[(1)] For any partition 
$\ul{p}$ which is not related to $\eta_{{\hat{\frak{g}}_n,\frak{g}_n}}(\ul{p}(\psi))$ and any 
$\pi\in\wt{\Pi}_{\psi}$,
$\ul{p}\notin\frak{p}^m(\pi)$.
\item[(2)] For any partition
$\ul{p}> \eta_{{\hat{\frak{g}}_n,\frak{g}_n}}(\ul{p}(\psi))$ and any 
$\pi\in\wt{\Pi}_{\psi}$,
$\ul{p}\notin\frak{p}^m(\pi)$.
\item[(3)] There exists at least one member $\pi\in\wt{\Pi}_{\psi}$ having the property that
$\eta_{{\hat{\frak{g}}_n,\frak{g}_n}}(\ul{p}(\psi))\in \frak{p}^m(\pi)$.
\end{enumerate}
Here $\eta_{{\hat{\frak{g}}_n,\frak{g}_n}}$ denotes the Barbasch-Vogan duality map from the partitions for the dual group $\widehat{\mathrm{G}}_n(\BC)$ to
the partitions for $G_n$ $($see \cite{BV85} and \cite{Ach03}$)$.
\end{conj}

Recall that given a partition $\ul{p}$ of $M$, where
$$M:= 
\begin{cases}
2n & \text{ when } \mathrm{G}_n=\Sp_{2n}, \SO_{2n}^{\alpha}, \\
2n+1 & \text{ when } \mathrm{G}_n=\SO_{2n+1},
\end{cases}
$$ 
$\ul{p}_{\RG_n}$ denotes the $\RG_n$-collapse of $\ul{p}$, that is, the largest $\RG_n$-partition smaller than or equal to $\ul{p}$, and $\ul{p}^{\RG_n}$ denotes the $\RG_n$-expansion of $\ul{p}$, that is, the smallest special $\RG_n$-partition bigger than or equal to $\ul{p}$. Given any partition $\underline{p}=[p_1p_2\cdots p_r]$ with $p_1 \geq p_2 \geq \cdots \geq p_r$, let 
$\underline{p}^{-}=[p_1p_2\cdots (p_r-1)]$ and 
$\underline{p}^{+}=[(p_1+1)p_2\cdots p_r]$. 
For the definition of special partitions, see \cite[Section 6.3]{CM93}. 
We recall the definition of the Barbasch-Vogan duality map from \cite{BV85}  and \cite{Ach03} as follows.

\begin{defn}\label{BV duality map}
\begin{enumerate}
    \item [(i)]For any partition $\underline{p}=[p_1p_2\cdots p_r]$ of $2n+1$ of orthogonal type (i.e., even parts with even multiplicities), $p_1 \geq p_2 \geq \cdots \geq p_r$, we define 
    $$\eta_{{\hat{\frak{g}}_n,\frak{g}_n}}(\underline{p}):= ((\underline{p}^{-})_{\Sp_{2n}})^{t}=((\underline{p}^{t})^{-})_{\Sp_{2n}},$$
    where $\frak{g}_n=\frak{sp}_{2n}$. 

    \item [(ii)]For any partition $\underline{p}=[p_1p_2\cdots p_r]$ of $2n$ of symplectic type (i.e., odd parts with even multiplicities), $p_1 \geq p_2 \geq \cdots \geq p_r$, we define $$\eta_{{\hat{\frak{g}}_n,\frak{g}_n}}(\underline{p}):= ((\underline{p}^{+})_{\SO_{2n+1}})^t=((\underline{p}^{t})^{+})_{\SO_{2n+1}},$$
    where $\frak{g}_n=\frak{so}_{2n+1}$.

    \item [(iii)] For any partition $\underline{p}=[p_1p_2\cdots p_r]$ of $2n$ of orthogonal type (i.e., even parts with even multiplicities), $p_1 \geq p_2 \geq \cdots \geq p_r$, we define $$\eta_{{\hat{\frak{g}}_n,\frak{g}_n}}(\underline{p}):= (\underline{p}^t)_{\SO_{2n}},$$
    where $\frak{g}_n=\frak{so}_{2n}$.
\end{enumerate}
\end{defn}

Our main results are summarized in the following theorem.

\begin{thm}\label{mainintro}
Let $\psi$ be a local Arthur parameter as in \eqref{lap}, with $\ul{p}(\psi) = [b_1^{a_1} b_2^{a_2} \cdots b_r^{a_r}]$ and $b_1 \geq b_2 \geq \cdots \geq b_r$. Then, we have the following. 

\begin{enumerate}
    \item (Theorem \ref{mainintropart1}).
    Conjecture \ref{wfsbitorsor} is true if and only if for any 
$\pi\in\wt{\Pi}_{\psi}$,
$\ul{p}\in\frak{p}^m(\pi)$, $$\dim_{{\frak{g}}_n} (\ul{p}) \leq \dim_{{\frak{g}}_n} (\eta_{{\hat{\frak{g}}_n,\frak{g}_n}}(\ul{p}(\psi))).$$ In particular, the latter implies that 
Conjecture \ref{cubmfclocal}(2) is valid.
\item For split $\RG_n$, Conjecture \ref{wfsbitorsor} holds when $p$ is large as follows:
    \begin{enumerate}
        \item $p > 6n+3$, when $\RG_n=\SO_{2n+1}$;
        \item $p > 6n$, when $\RG_n=\Sp_{2n}$;
        \item $p > 6n$, when $\RG_n=\SO_{2n}$.
    \end{enumerate}
\item Assume that Conjecture \ref{wfsbitorsor} is true. Then we have the following. 
\begin{enumerate}
\item (Theorem \ref{mainintropart2}).
Conjecture \ref{shaconj2} is true.
\item (Theorem \ref{mainintropart3}). 
Let $$\underline{p}_1=\left[\big\lfloor \frac{b_1}{2} \big\rfloor^{a_1}
\big\lfloor \frac{b_2}{2} \big\rfloor^{a_2} \cdots \big\lfloor \frac{b_r}{2} \big\rfloor^{a_r}\right]^t,$$
and $n^{*}=\big\lfloor\frac{\sum_{b_i \text{ odd }} a_i}{2}\big\rfloor$. 
    Then Conjecture \ref{cubmfclocal}(3) holds for the following cases.
    \begin{enumerate}
        \item When $\mathrm{G}_n=\Sp_{2n}$, and
        \begin{equation}\label{criterion_intro1}
    ([\underline{p}_1\underline{p}_1(2n^*)]^t)_{\Sp_{2n}}=([b_1^{a_1} \cdots b_r^{a_r}]^-)_{\Sp_{2n}}.
\end{equation}
In particular, if 
\begin{enumerate}
    \item $a_r=b_r=1$ and $b_i$ are all even for $1 \leq i \leq r-1$,
    \item or, $b_i$ are all odd,
\end{enumerate}
 then \eqref{criterion_intro1} holds and thus Conjecture \ref{cubmfclocal}(3) is valid. 
        \item When $\mathrm{G}_n=\SO_{2n+1}$, and
        \begin{equation}\label{criterion_intro2}
    ([\underline{p}_1\underline{p}_1(2n^*+1)]^t)_{\SO_{2n+1}}=([b_1^{a_1} \cdots b_r^{a_r}]^+)_{\SO_{2n+1}}.
\end{equation}
        In particular, if \begin{enumerate}
            \item $b_1$ is even and $a_1=1$, and $b_i$ are all odd for $2 \leq i \leq r$,
            \item or, $b_i$ are all even, 
        \end{enumerate}
        then \eqref{criterion_intro2} holds  and thus Conjecture \ref{cubmfclocal}(3) is valid.
        \item When    $\mathrm{G}_n=\SO_{2n}^{\alpha}$, and
         \begin{equation}\label{criterion_intro3}
    [\underline{p}_1\underline{p}_1(2n^*-1)1]^{\SO_{2n}}=([b_1^{a_1} \cdots b_r^{a_r}]^t)_{\SO_{2n}}.
 \end{equation}
        If all $b_i$ are of the same parity, then \eqref{criterion_intro3} holds  and thus Conjecture \ref{cubmfclocal}(3) is valid. 
    \end{enumerate}

\end{enumerate}
\end{enumerate}
\end{thm}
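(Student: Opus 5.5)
The overall route is to compare two character expansions: that of $\pi\in\wt{\Pi}_\psi$ on $G_n$, and that of $\wt{\pi}_\psi$ on the bitorsor $\wt{\GL}_N(F)$ at $\theta_{\widehat{\mathrm{G}}_n}$. Near the identity, Arthur's endoscopic character identity reads
$$\Theta_{\wt{\pi}_\psi}(\tilde g)=\sum_{\pi\in\wt{\Pi}_\psi}\langle s_\psi,\pi\rangle\,\Theta_\pi(g)\qquad(\tilde g\leftrightarrow g \text{ stably conjugate, norm-matched}).$$
It matches the stable combination of Harish-Chandra--Howe expansions on $\frak g_n$ with the twisted expansion on $\frak g_n^\theta$. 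Via the twisted transfer of nilpotent orbital integrals (Kottwitz, Varma), nilpotent orbits of $\frak g_n$ correspond to nilpotent orbits of $\frak g_n^\theta$. This matching is between $\Sp_{2n}$ and $\SO_{2n+1}$ in types B/C, and from $\SO_{2n}$ to $\Sp_{2n}$ in type D. Under it, dimensions of stable orbits are preserved, and the Barbasch--Vogan duality is compatible with collapse: the orbit of $\frak g_n^\theta$ attached to $(\ul p(\psi)^t)_{\RG_n^\theta}$ has the same dimension as $\eta_{\hat{\frak g}_n,\frak g_n}(\ul p(\psi))$. I would verify this purely combinatorially from Definition \ref{BV duality map}, case by case, using the Collingwood--McGovern formulas for orbit dimensions in terms of partitions.

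For (1), I would use the leading-term principle. The terms of maximal orbit dimension in the expansion of $\sum \langle s_\psi,\pi\rangle\Theta_\pi$ cannot cancel across different $\pi$. The reason is multiplicity freeness together with linear independence of characters: restricting to a suitable family of test functions (Mœglin--Waldspurger degenerate Whittaker models) isolates each $\pi$. Hence the maximal dimension over $\pi\in\wt\Pi_\psi$ and $\ul p\in\frak p^m(\pi)$ equals the maximal dimension appearing in $\frak n(\wt\pi_\psi)$, after transfer. Combined with the dimension identity above, this gives the equivalence in (1). Part (2) of Conjecture \ref{cubmfclocal} follows because $\ul p>\eta$ forces $\dim\ul p>\dim\eta$.

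For (2), I would invoke Atobe--Ciubotaru's computation of wavefront sets for the relevant representations when $p$ is large. Their result gives the geometric wavefront set of the members of $\wt\Pi_\psi$ as bounded by the dual of $\ul p(\psi)$. Feeding this into the equivalence of (1) yields Conjecture \ref{wfsbitorsor}. The bounds $p>6n+3$ and $p>6n$ are what make their hypotheses, including the validity of the character expansion and the transfer, hold for $\SO_{2n+1}$, $\Sp_{2n}$ and $\SO_{2n}$.

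For (3a), suppose the packet $\wt\Pi_\psi$ has a generic member. Then some $\pi\in\wt\Pi_\psi$ has the regular orbit in $\frak p^m(\pi)$, so by (1) the partition $\eta(\ul p(\psi))$ is regular. That forces $\ul p(\psi)=[1^N]$, i.e.\ $\psi$ is tempered. The converse is Conjecture \ref{shaconj}, which is known. For (3b), I would construct an explicit member of the packet, namely a Mœglin-type induced or Aubert-dual representation of the form $\mathrm{Ind}(\tau\otimes\sigma)$. Here $\tau$ is built from Speh-type pieces of $\GL$ giving $\ul p_1$ twice, and $\sigma$ is a generic piece of rank $n^*$. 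By the induction formula for wavefront sets (Mœglin--Waldspurger), its wavefront contains the induced orbit $[\ul p_1\ul p_1(2n^*)]^t$, collapsed. Equality with $\eta$, via \eqref{criterion_intro1}--\eqref{criterion_intro3}, combined with the upper bound from (1), gives (3). The special cases listed follow by direct partition calculation.

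The main obstacle is the non-cancellation step in (1). Here I would try to argue via positivity of degenerate Whittaker model dimensions and the sign structure $\langle s_\psi,\pi\rangle$; I expect this to need the most care.
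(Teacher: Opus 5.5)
\textbf{The gap is in the $\Rightarrow$ direction of (1).} Your treatment of the $\Leftarrow$ direction of (1), and hence of (2), is essentially the paper's. With the stable identity ($x=1$) you isolate one orbit on the twisted side. The scaling of Lemmas \ref{transfer} and \ref{transfer2} then gives some $\CO_1\in\CN_{\frak g_n}$ of matching codimension with $\sum_\pi\langle s_\psi,\pi\rangle c_{\CO_1}(\pi)\neq 0$, so some individual $c_{\CO_1}(\pi)$ is nonzero; no non-cancellation is needed.

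The $\Rightarrow$ direction is exactly where you anticipated trouble, and your proposed mechanism does not close it. The stable identity only constrains the signed sum $\sum_\pi\langle s_\psi,\pi\rangle\Theta_\pi$. Its germ at the identity lies in the finite-dimensional span of the $\hat\mu_{\CO}$, so linear independence of characters on $G_n$ gives no independence of the local coefficients. The test functions of Lemma \ref{liorbits} separate orbits, not representations, since both sides are evaluated at the same $f$. Multiplicity freeness concerns how often $\pi$ occurs in the packet, not the values $c_{\CO}(\pi)$. The maximal coefficients are non-negative integers, but the signs $\langle s_\psi,\pi\rangle$ can be $\pm1$ in non-tempered packets, so they may cancel, and then the stable identity gives no contradiction.

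The missing idea is to apply Arthur's identity with $x=s_\psi$, as in Section \ref{dimidentities}. Then $\langle s_\psi x,\pi\rangle=1$ for every $\pi$ and the left side is $\sum_\pi\Theta_\pi$. For $\ul p$ maximal in $\bigcup_\pi\frak p^m(\pi)$, the positivity in \cite{MW87} gives $\sum_\pi c_{\CO_0}(\pi)>0$. The price is that the right side becomes the transfer to the endoscopic group $G_1'\times G_2'$ cut out by $s_\psi$. This is the product of the twisted characters of $\wt\pi_{\psi^1}$ and $\wt\pi_{\psi^2}$ (the parts with $b_i$ odd, resp.\ even). So you must invoke Conjecture \ref{wfsbitorsor} for $\psi^1$ and $\psi^2$ separately, and use the split codimension identity of Proposition \ref{prop:keylemma-v3-all} rather than a single-factor identity. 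Your (3a) and (3b) are deduced from the $\Rightarrow$ direction and inherit the gap. In the paper, (3a) runs the same sign-free endoscopic argument directly at the regular orbit, which forces $\CO_1$ and $\CO_2$ to be regular and contradicts the conjecture for $\psi^1,\psi^2$.

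\textbf{Two further corrections.} First, in type D, $\dim\frak{so}_{2n}\neq\dim\frak{sp}_{2n}$, so the scaling matches codimensions, not dimensions. Your claim that $(\ul p(\psi)^t)_{\RG_n^\theta}$ and $\eta_{\hat{\frak g}_n,\frak g_n}(\ul p(\psi))$ have the same dimension fails there: for tempered $\psi$ the two regular orbits have dimensions $2n^2$ and $2n^2-2n$. The input actually needed is the codimension identity of Lemma \ref{lem:onefactor-soeven-codim}.

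Second, in (3b) the packet member is the Langlands quotient $\sigma$ of $\rho\rtimes\sigma^{(t)}$, which is in general a proper quotient. For $\psi=1\otimes S_1\otimes S_3$ on $\Sp_2$, $\sigma$ is the trivial representation, a quotient of $v\rtimes 1$. The M{\oe}glin--Waldspurger induction formula describes the induced representation, so it only bounds $\frak n(\sigma)$ from above. Indeed, your partition ``$[\ul p_1\ul p_1(2n^*)]^t$, collapsed'' is the regular orbit in this example. The paper instead obtains the lower bound $[\ul p_1\ul p_1(2n^*)]\in\frak p(\sigma)$ (and its analogues) as follows:
\begin{enumerate}
\item It takes a Whittaker pair whose twisted Jacquet module factors through the Jacquet module to a Levi $\GL\times G_{n^*}$.
\item It uses $\frak p^m(\rho)=\{\ul p_1\}$, the genericity of $\sigma^{(t)}$, and \cite[Theorem C]{GGS17}.
\item It then raises to the special expansion $[\ul p_1\ul p_1(2n^*)]^{\RG_n}$ via \cite{JLS16}.
\end{enumerate}
The criteria in (3b) are exactly what identify this expansion with $\eta_{\hat{\frak g}_n,\frak g_n}(\ul p(\psi))$.
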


\begin{rmk} \
\begin{enumerate}
\item Theorem \ref{mainintro}(1) is proved by
applying a 
generalization of the matching method used in \cite[Section 9]{Sha90} to the endoscopic character identity in \cite{Art13}. 

    \item For split $\RG_n$, applying the equivalence in Theorem \ref{mainintro}(1), Theorem \ref{mainintro}(2) is directly implied by \cite[Theorem 1.4(2)]{AC26}. Indeed, \cite{AC26} claims a proof of Conjecture \ref{cubmfclocal} for split $G_n$ under certain hypothesis, applying similar matching methods.
    \item For Theorem \ref{mainintro}(3-b), we  explicitly construct a member $\sigma$ in $\wt{\Pi}_{\psi}$ such that $\eta_{{\hat{\frak{g}}_n,\frak{g}_n}}(\underline{p}(\psi)) \in \ul{p}(\sigma)$, however, to show that $\eta_{{\hat{\frak{g}}_n,\frak{g}_n}}(\underline{p}(\psi)) \in \ul{p}^m(\sigma)$, we need Conjecture \ref{wfsbitorsor}.
The equalities \eqref{criterion_intro1} -  \eqref{criterion_intro3} guarantee that $\eta_{{\hat{\frak{g}}_n,\frak{g}_n}}(\underline{p}(\psi)) \in \ul{p}(\sigma)$. 
When $b_i$ are of mixed parities, then the equalities \eqref{criterion_intro1} -- \eqref{criterion_intro3} may not always hold, see Remark \ref{mixedpartities} for examples.
\end{enumerate}
\end{rmk}

This paper serves as an initial step towards the deep conjecture of Jiang (Conjecture \ref{cubmfclocal}) on the wavefront sets of representations in general local Arthur packets, through the endoscopic character identities. Jiang's conjecture (Conjecture \ref{cubmfclocal}) has analogs for quasi-split unitary groups and non-quasi-split classical groups. 
The method used in this paper is expected to extend to these cases (as long as the local Arthur classification is carried out).

We remark that Okada (\cite{Oka21}) has computed the wavefront set of irreducible unramified representations
of split connected reductive $p$-adic groups with local Arthur parameter $\psi$ being trivial on the Weil-Deligne group. Ciubotaru, Mason-Brown, and Okada (\cite{CMBO24, CMBO25}) have computed the wavefront set of irreducible Iwahori-spherical representations
of split connected reductive $p$-adic groups
with ``real infinitesimal characters".
Waldspurger (\cite{Wal18, Wal20}) has computed the wavefront set of 
tempered and anti-tempered unipotent representations of pure inner twists of $\SO_{2n+1}(F)$.  
Combining these results on unipotent representations with the closure ordering result in \cite{HLLZ22}, jointly with Hazeltine and Lo (\cite{HLLS24}), we proved the following theorem towards Conjecture \ref{cubmfclocal}. 

\begin{thm}{\cite[Theorem 11.4]{HLLS24}}\label{unipotent jiang conjecture}
    Let $\RG_n=\Sp_{2n}, \SO_{2n+1}$. Then Conjecturre \ref{cubmfclocal} is true if $\psi$ is trivial on $W_F$.
\end{thm}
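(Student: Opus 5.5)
The plan is to work with Aubert–Zelevinsky duality and reduce everything to the known computations of wavefront sets of unipotent representations with real infinitesimal character. When $\psi$ is trivial on $W_F$, every $\pi\in\wt{\Pi}_{\psi}$ is a unipotent representation of $G_n$, and its infinitesimal character is real. Let $\hat\psi$ denote $\psi$ with the two $\SL_2(\BC)$-factors swapped. By M{\oe}glin's and Xu's results, the Aubert–Zelevinsky involution $\pi\mapsto\hat\pi$ carries $\wt{\Pi}_{\psi}$ bijectively onto $\wt{\Pi}_{\hat\psi}$. The Deligne $\SL_2$ of the $L$-parameter $\phi_{\hat\psi}$ is the Arthur $\SL_2$ of $\psi$. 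Hence the nilpotent orbit $\CO_{\phi_{\hat\psi}}$ in $\hat{\frak g}_n$ attached to $\phi_{\hat\psi}$ has partition exactly $\ul{p}(\psi)$.

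\textbf{Step 1: upper bound, parts (1) and (2).} Take $\pi\in\wt{\Pi}_{\psi}$. By the closure ordering result of \cite{HLLZ22} applied to $\hat\pi\in\wt{\Pi}_{\hat\psi}$, the $L$-parameter $\phi_{\hat\pi}$ satisfies
$$\CO_{\phi_{\hat\pi}}\geq \CO_{\phi_{\hat\psi}},$$
i.e.\ its orbit has partition dominating $\ul{p}(\psi)$. Next I would invoke the Ciubotaru–Mason-Brown–Okada formula (\cite{CMBO24, CMBO25}) for real infinitesimal character. It expresses the geometric wavefront set of $\pi$ as the (Achar refinement of the) Barbasch–Vogan/Sommers dual of $\CO_{\phi_{\hat\pi}}$, the orbit attached to the Aubert dual. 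Its underlying partition is therefore $\eta_{\hat{\frak g}_n,\frak g_n}$ applied to the partition of $\CO_{\phi_{\hat\pi}}$. Since $\eta_{\hat{\frak g}_n,\frak g_n}$ is order reversing, every $\ul{p}\in\frak p^m(\pi)$ is bounded by $\eta_{\hat{\frak g}_n,\frak g_n}(\ul{p}(\psi))$. This rules out both partitions strictly larger than it and partitions unrelated to it, giving (1) and (2).

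\textbf{Step 2: existence, part (3).} Choose $\pi_0\in\wt{\Pi}_{\psi}$ whose Aubert dual is the (generic, or unramified-compatible) member of $\wt{\Pi}_{\hat\psi}$ with $L$-parameter exactly $\phi_{\hat\psi}$. Such a member lies in the packet, since $\phi_{\hat\psi}$ belongs to $\wt{\Pi}_{\hat\psi}$ as its basic $L$-packet member. Then the same formula gives
$$\frak p^m(\pi_0)=\{\eta_{\hat{\frak g}_n,\frak g_n}(\ul{p}(\psi))\},$$
which is part (3). When $\pi_0$ is spherical, this is also Okada's computation in \cite{Oka21}.

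\textbf{Main obstacle.} The hard step is that \cite{CMBO24, CMBO25} cover only Iwahori-spherical representations, while members of $\wt{\Pi}_{\psi}$ can be unipotent but supported on non-Iwahori parahorics (cuspidal unipotent data). For these, my first attempt would be to combine Waldspurger's results \cite{Wal18, Wal20} for $\SO_{2n+1}$ and its inner twists with the general Lusztig/CMBO framework for unipotent representations. That framework reduces wavefront computations for any unipotent representation to the same dual-orbit formula in terms of the Aubert-dual $L$-parameter. A further point to check is that the geometric (algebraic closure) wavefront set computed there controls the $F$-rational partitions in $\frak p^m(\pi)$. This holds since the partitions of $F$-rational orbits in $\frak n^m(\pi)$ lie below the geometric wavefront set and attain it, which suffices for the partition-level statements in Conjecture \ref{cubmfclocal}.
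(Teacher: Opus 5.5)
Your proposal follows essentially the same route as the proof the paper cites from \cite[Theorem 11.4]{HLLS24}. That proof applies the closure ordering of \cite{HLLZ22} to the Aubert dual packet $\wt{\Pi}_{\hat\psi}$ and combines it with the wavefront set computations for unipotent representations with real infinitesimal character (\cite{Oka21, CMBO24, CMBO25, Wal18, Wal20}), which give the Barbasch--Vogan dual of the orbit of the Aubert dual's $L$-parameter, with part (3) realized by a member whose Aubert dual lies in the $L$-packet of $\phi_{\hat\psi}$. What you need to replace by direct citations are your hedged ``first attempt'' for the non-Iwahori-spherical members (Waldspurger only treats $\SO_{2n+1}$, so $\Sp_{2n}$ needs the Ciubotaru--Mason-Brown--Okada formula in the full unipotent category) and the compatibility of Lusztig's and Arthur's parametrizations, which is what lets you plug the closure ordering into that formula.
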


We remark that, together with Hazeltine and Lo (\cite{HLLS24}), we also proved the following very interesting reduction in Conjecture \ref{cubmfclocal}(1-2). 

\begin{thm}{\cite[Theorem 1.5]{HLLS24}}\label{thm reduction intro}
    The following statements are equivalent.
\begin{enumerate}
    \item  Conjecture \ref{cubmfclocal}(1-2) holds for any local Arthur parameter.
    \item Conjecture \ref{cubmfclocal}(1-2) holds for any anti-tempered local Arthur parameter.
    \item Conjecture \ref{cubmfclocal}(1-2) holds for any anti-discrete  local Arthur parameter.
\end{enumerate}
\end{thm}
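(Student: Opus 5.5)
The plan is to prove the equivalences in the cycle $(1)\Rightarrow(2)\Rightarrow(3)\Rightarrow(1)$. The first two implications are formal. The anti-discrete parameters form a subclass of the anti-tempered ones, which in turn form a subclass of all local Arthur parameters. Here $\psi$ is anti-tempered (resp. anti-discrete) if $\hat\psi$, obtained by swapping the two $\SL_2(\BC)$ factors, is tempered (resp. discrete). The real content is $(3)\Rightarrow(1)$, which I would do in two reductions: general $\Rightarrow$ anti-tempered, and anti-tempered $\Rightarrow$ anti-discrete.

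For general $\Rightarrow$ anti-tempered, I would use M{\oe}glin's construction (in Atobe's refined form) to write each $\pi\in\wt{\Pi}_{\psi}$ as a subquotient of a parabolic induction. More precisely, I would write $\psi=\rho\oplus\psi_0\oplus\rho^\vee$, where $\rho$ collects summands whose parity or non-self-duality allows them to be split off. Then every $\pi\in\wt{\Pi}_\psi$ is a subquotient of $\tau\rtimes\pi_0$, with $\tau$ a representation of a general linear group in the packet attached to $\rho$ (a Speh-type representation) and $\pi_0\in\wt{\Pi}_{\psi_0}$. By the theorem of M{\oe}glin--Waldspurger on wavefront sets of induced representations, $\frak{p}^m(\pi)$ is bounded above by the induced orbit. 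This orbit is the $\RG_n$-collapse of $\frak{p}^m(\tau)+\frak{p}^m(\tau)+\frak{p}^m(\pi_0)$, where partitions are added columnwise (sum of parts). Theorem \ref{wfslinear} computes $\frak{p}^m(\tau)$ as a transpose.

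The remaining input is a combinatorial identity. Induction of orbits is compatible with Barbasch--Vogan duality in the form
$$\eta(\ul{p}(\rho)\sqcup\ul{p}(\psi_0)\sqcup\ul{p}(\rho))=\big(\ul{p}(\rho)^t+\ul{p}(\rho)^t+\eta(\ul{p}(\psi_0))\big)_{\RG_n}.$$
This follows from the Lusztig--Spaltenstein description of induction as saturation, combined with Definition \ref{BV duality map}. The delicate point is the collapse: the sum need not be an $\RG_n$-partition, so I would check that collapsing commutes appropriately, using \cite[Section 6.3]{CM93}. For (1), I would use the fact that the bound is by a unique maximal orbit, so "not related" cannot occur.

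For anti-tempered $\Rightarrow$ anti-discrete, an anti-tempered $\psi=\bigoplus\phi_i\otimes S_1\otimes S_{n_i}$ with multiplicities would be reduced by splitting off pairs $\phi\otimes S_1\otimes S_n\oplus(\phi\otimes S_1\otimes S_n)$, or non-self-dual pairs. This puts it in the induced situation above, via Aubert duality applied to the tempered case: the anti-tempered packet is the Aubert dual of the tempered packet, and tempered non-discrete packets come from induction. So the same induction bound and duality identity apply, with $\rho=\phi\otimes S_1\otimes S_n$.

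The main obstacle is the compatibility of $\eta$ with induction in the presence of collapses, together with handling $\SO_{2n}^\alpha$ and rational (not just geometric) orbits. I would first verify the identity for adding a single rectangle $[n^{2k}]$, and then iterate.
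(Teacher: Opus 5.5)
Your implications $(1)\Rightarrow(2)\Rightarrow(3)$ are fine. Your second reduction (anti-tempered to anti-discrete) is also fine, for these reasons:
\begin{itemize}
\item anti-tempered packets are Aubert duals of tempered $L$-packets;
\item tempered packets are built from discrete ones by parabolic induction;
\item Aubert duality commutes with induction in the Grothendieck group;
\item the M{\oe}glin--Waldspurger induction bound, combined with the compatibility of $\eta$ with saturation and induction, gives the bound. That compatibility is a known property of Barbasch--Vogan duality, so you need not reprove it rectangle by rectangle.
\end{itemize}

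The gap is in the first reduction, ``general $\Rightarrow$ anti-tempered''. M{\oe}glin's construction only lets you split off summands of bad parity or summands occurring with multiplicity. What remains, $\psi_0$, is a multiplicity-free parameter of good parity, and such a parameter is in general \emph{not} anti-tempered. For example, $\psi=1\otimes S_3\otimes S_3$ for $\Sp_8$, or $\psi=1\otimes S_3\otimes S_1\oplus 1\otimes S_1\otimes S_3\oplus 1\otimes S_1\otimes S_1$ for $\Sp_6$, admit no decomposition $\rho\oplus\psi_0\oplus\rho^\vee$ at all. Their members are not obtained by parabolic induction from an anti-tempered packet of a smaller group matching a decomposition of $\psi$. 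In the M{\oe}glin--Atobe construction they come from packets of larger groups via Jacquet-module derivatives, which give no wavefront bound. So your plan never handles good-parity summands with $m_i>1$, or mixtures of tempered-type and anti-tempered-type summands, and $(3)\Rightarrow(1)$ is not established.

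The missing idea is to argue representation by representation, and then compare $L$-parameters.
\begin{enumerate}
\item Given $\pi\in\wt{\Pi}_{\psi}$, take the Langlands data of its Aubert dual $\hat\pi$. Then $\hat\pi$ is the Langlands quotient of a standard module whose tempered part embeds in an induction from a discrete series $\pi_d$.
\item Dualizing, $\pi$ is a subquotient of $\hat\tau\rtimes\hat\pi_d$. Here $\hat\tau$ lives on a product of general linear groups, and $\hat\pi_d$ lies in the anti-discrete packet attached to $\phi_{\pi_d}$.
\item Hypothesis (3), the induction bound and your identity then give $\frak{p}^m(\pi)\le\eta(\ul{p}(\phi_{\hat\pi}))$. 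This bound depends on $\pi$, not on $\psi$.
\item To get back to $\psi$ you need that $\hat\pi\in\wt{\Pi}_{\hat\psi}$ (M{\oe}glin). Crucially, you also need the closure ordering theorem of \cite{HLLZ22}: $\phi_{\hat\pi}\ge\phi_{\hat\psi}$. This gives $\ul{p}(\phi_{\hat\pi})\ge\ul{p}(\phi_{\hat\psi})=\ul{p}(\psi)$.
\item Since $\eta$ is order-reversing, $\eta(\ul{p}(\phi_{\hat\pi}))\le\eta(\ul{p}(\psi))$.
\end{enumerate}

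This comparison between the $L$-parameters of members of an Arthur packet and $\phi_{\hat\psi}$ is a substantial representation-theoretic input that orbit combinatorics cannot replace, and it is absent from your plan. In the anti-tempered case it is vacuous, because there $\phi_{\hat\pi}=\phi_{\hat\psi}$; that is exactly why your second step goes through and your first does not.
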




Therefore, combining with Theorem \ref{unipotent jiang conjecture}, we obtain the following result towards Conjecture \ref{wfsbitorsor}.

\begin{thm}\label{unipotent bitorsor conjecture}
    Let $\RG_n=\Sp_{2n}, \SO_{2n+1}$. 
    Assume that $\psi$ is trivial on $W_F$. 
    Then Conjecture \ref{wfsbitorsor} holds. 
\end{thm}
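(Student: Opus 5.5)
The plan is to combine the equivalence in Theorem \ref{mainintro}(1) with Theorem \ref{unipotent jiang conjecture}. Theorem \ref{mainintro}(1) (proved later as Theorem \ref{mainintropart1}) says that, for a fixed $\psi$, Conjecture \ref{wfsbitorsor} for $\wt{\pi}_{\psi}$ holds if and only if
\[
\dim_{\frak{g}_n}(\ul{p})\leq \dim_{\frak{g}_n}\bigl(\eta_{\hat{\frak{g}}_n,\frak{g}_n}(\ul{p}(\psi))\bigr)
\quad\text{for all } \pi\in\wt{\Pi}_{\psi},\ \ul{p}\in\frak{p}^m(\pi).
\]
It therefore suffices to verify this dimension bound for every $\psi$ trivial on $W_F$, with $\RG_n=\Sp_{2n}$ or $\SO_{2n+1}$. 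We will check that the equivalence is stated parameter by parameter, so that it can be applied to a single unipotent $\psi$; the matching argument of Section 9 type is indeed done for a fixed $\psi$.

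\textbf{Step 1.} Take such a $\psi$. By Theorem \ref{unipotent jiang conjecture} (that is, \cite[Theorem 11.4]{HLLS24}), Conjecture \ref{cubmfclocal} holds for $\psi$. In particular parts (1) and (2) hold: for every $\pi\in\wt{\Pi}_{\psi}$, every $\ul{p}\in\frak{p}^m(\pi)$ is related to $\eta:=\eta_{\hat{\frak{g}}_n,\frak{g}_n}(\ul{p}(\psi))$ and is not strictly larger than it. Since every partition related to $\eta$ is either $\leq\eta$ or $>\eta$, this forces $\ul{p}\leq\eta$ in the dominance order.

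\textbf{Step 2.} Use the standard fact that the dimension of a nilpotent orbit is monotone with respect to the dominance (closure) order on partitions of a fixed classical type. This follows from the Gerstenhaber--Hesselink theorem together with the formula for $\dim_{\frak{g}_n}$ in terms of $\sum (p_i^t)^2$ and the number of odd parts \cite[Section 6.1]{CM93}. Hence $\ul{p}\leq\eta$ gives $\dim_{\frak{g}_n}(\ul{p})\leq\dim_{\frak{g}_n}(\eta)$. Here $\ul{p}$ is a $\RG_n$-partition because $\ul{p}\in\frak{p}^m(\pi)$, and $\eta$ is a $\RG_n$-partition by Definition \ref{BV duality map}.

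\textbf{Step 3.} The bound from Step 2 is exactly the right-hand condition in Theorem \ref{mainintro}(1). Applying that equivalence gives Conjecture \ref{wfsbitorsor} for $\wt{\pi}_{\psi}$.

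The main obstacle is making sure the equivalence in Theorem \ref{mainintro}(1) really holds for each individual $\psi$, and in particular that the direction ``dimension bound for the packet $\Rightarrow$ bound for $\wt{\pi}_{\psi}$'' is available. That direction comes from the character identity, which expresses $\wt{\pi}_{\psi}$'s twisted character through the stable combination over $\wt{\Pi}_{\psi}$, together with matching of nilpotent orbital integrals between $\RG_n^{\theta}$ and $\RG_n$. So no other parameters enter, and the class of unipotent parameters is not required to be closed under any operation.
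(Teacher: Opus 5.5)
Your proposal is correct and is essentially the paper's argument: Theorem \ref{unipotent jiang conjecture} gives parts (1)--(2) of Conjecture \ref{cubmfclocal} for $\psi$ trivial on $W_F$, which forces $\ul{p}\leq\eta_{\hat{\frak{g}}_n,\frak{g}_n}(\ul{p}(\psi))$ and hence the dimension bound for every $\ul{p}\in\frak{p}^m(\pi)$, and the ``if'' direction of Theorem \ref{mainintropart1} then yields Conjecture \ref{wfsbitorsor}. You are also right that this direction uses only the stable identity (the case $x=s=1$) for the single parameter $\psi$, so it applies to each such $\psi$ individually.
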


The following is the structure of this paper. In Section \ref{characlap}, we recall the characterization of the local Arthur packets in \cite[Section 2.2]{Art13} and prove Theorem \ref{wfslinear}. In Section
\ref{def of FC's}, we recall certain twisted Jacquet modules associated to nilpotent orbits, following the formulation in \cite{GGS17}. In Section \ref{dimidentities}, we prove certain dimension identities for nilpotent orbits, which are important ingredients for our main results. In Section \ref{nonvan},
we construct a particular element in each local Arthur packet and study its wavefront set. In Section \ref{vanandmain}, we prove our main result Theorems \ref{mainintro}.
In Appendix \ref{appendix}, together with Alexander Hazeltine and Chi-Heng Lo, we show that all representations in any local Arthur packet share the same central character, which has its own interests. 

\subsection*{Acknowledgements} 
The authors would like to thank Dihua Jiang for his interests and helpful discussions. The authors would like to thank Wee-Teck Gan, Tasho Kaletha, Colette M{\oe}glin, David Vogan, and Bin Xu for helpful communications. The authors also would like to thank the Fields Institute and the Erwin Schrödinger International Institute
for Mathematics and Physics (ESI) for the support on workshops and conferences where the results of this paper were presented (2021 and 2025).

\section{Characterization of local Arthur packets}\label{characlap}

In this section, we review the characterization of local Arthur packets as in \cite[Section 2.2]{Art13} and prove Theorem \ref{wfslinear}.

Let $\theta$ be the standard outer automorphism of $\mathrm{G}(N)=\GL(N)$: $g \mapsto {}^t g^{-1}$ and let 
$\tilde{\theta}(N)=Int(\tilde{J})\circ \theta: g \mapsto \tilde{J} \theta(g) \tilde{J}^{-1}$, where 
$$
\tilde{J}=\tilde{J}(N)=
\begin{pmatrix}
0&&&1\\
&&-1&\\
&\dots&&\\
(-1)^{N-1}&&&0
\end{pmatrix}.
$$
Recall that 
$$N= N_n=
\begin{cases}
2n+1 & \text{ when } \mathrm{G}_n=\Sp_{2n},\\
2n & \text{ when } \mathrm{G}_n=\SO_{2n+1}, \SO_{2n}^{\alpha}.
\end{cases}
$$ 
Let $\widetilde{\mathrm{G}}^+(N)=\mathrm{G}(N) \rtimes \langle \tilde{\theta}(N) \rangle$. Let $\widetilde{\mathrm{G}}^0(N)=\mathrm{G}(N) \rtimes 1$ be the identity component of $\widetilde{\mathrm{G}}^+(N)$, and $\widetilde{\mathrm{G}}(N)=\mathrm{G}(N) \rtimes \tilde{\theta}(N)$ be the other connected component. 
Fix a $\wt{\theta}(N)$-stable Whittaker datum $(B(N),\chi(N))$ for $\widetilde{\mathrm{G}}^0(N)(F)$ by taking $B(N)$ to be the standard Borel subgroup of $\widetilde{\mathrm{G}}^0(N)(F)=\mathrm{GL}(N)(F)$, and $\chi(N)$ to be the nondegenerate character on the unipotent radical of $B(N)$ as follows:
$$\chi(N)(u)=\psi_F(u_{1,2}+\cdots+u_{N-1,N}),$$
where $\psi_F$ is a fixed nontrivial additive character of $F$. 
Given an irreducible admissible self-dual representation $(\pi,V)$ of $G(N)=\mathrm{G}(N)(F)$, as in \cite[Section 2.2]{Art13}, we extend it to a representation $\tilde{\pi}$ of $\widetilde{G}^+(N)=\widetilde{\mathrm{G}}^+(N)(F)$ as follows. First assume that $\pi$ is tempered, hence it has nontrivial Whittaker functionals. Since $\pi$ is self-dual, $\pi$ is isomorphic to $\pi \circ \tilde{\theta}(N)$, 
choose a nontrivial intertwining operator $\tilde{I}$ from $\pi$ to $\pi \circ \tilde{\theta}(N)$. Fix a nontrivial $(B(N),\chi(N)$-Whittaker functional $\omega$ for $\pi$, then $\omega \circ \tilde{I}$ is also a nontrivial $(B(N),\chi(N)$-Whittaker functional for $\pi$, 
by the uniqueness of Whittaker functionals, $\omega \circ \tilde{I}= c \omega$, for some nonzero constant $c$. Set $\tilde{\pi}(N)=\pi(\tilde{\theta}(N))=c^{-1} \circ \tilde{I}$. Then $\tilde{\pi}(N)$ is the unique intertwining operator from $\pi$ to $\pi \circ \tilde{\theta}(N)$ such that $\omega = \omega \circ \tilde{\pi}(N)$. Therefore, we obtain a unitary extension $\tilde{\pi}$ of $\pi$ to $\widetilde{G}^+(N)$ and hence to $\widetilde{G}(N)=\widetilde{\mathrm{G}}(N)(F)$. 
In general, if $\pi$ is the Langlands quotient of a standard  representation which is induced from a twist of a tempered representation, then one can first extend the tempered representation  as above, then continue extending to a representation $\tilde{\pi}$ of $\widetilde{G}^+(N)$ by parabolic induction and hence to $\widetilde{G}(N)$. 

Given an Arthur parameter $\psi$ as in \eqref{lap}, let $\pi_{\psi}$ be the representation of $G(N)$ corresponding to $\phi_{\psi}$ via local Langlands correspondence, which is unitary and self-dual. And let $\wt{\pi}_{\psi}$ be its canonical extension to the bitorsor $\widetilde{G}(N)$. 
Let $K_N=\GL(\frak{o}_F)$ and $K_{G_n}=\mathrm{G}_n(\frak{o}_F)$. 
Let $\tilde{\mathcal{H}}(N)$ be the corresponding Hecke algebra of $\GL_N(F)$ and $\tilde{\mathcal{H}}(G_n)$ be the ${\mathrm{\tilde{O}ut}}_N(G_n)$-invariant functions in the Hecke algebra $\mathcal{H}(G_n)$ of $G_n$. 
Let ${\mathcal{S}}(G_n)$ be the set of stable transfers ${f}^{G_n}$ of ${f} \in \mathcal{H}(G_n)$ and let $\tilde{\mathcal{S}}(G_n)$ be the subspace of the ${\mathrm{\tilde{O}ut}}_N(G_n)$-invariant functions in ${\mathcal{S}}(G_n)$.
For $\tilde{f} \in \tilde{\mathcal{H}}(N)$, let 
\begin{equation}\label{gltr}
\tilde{f}_N(\psi)=\mathrm{tr}(\tilde{\pi}_{\psi}(\tilde{f})) {{} = \mathrm{tr} ({\pi}_{\psi}(\tilde{f}) \circ \widetilde{\pi}_{\psi}(\widetilde{\theta}(N)))}.
\end{equation}
Arthur shows the following theorem on the characterization of the local Arthur packet 
$\widetilde{\Pi}_{\psi}$ attached to the Arthur parameter $\psi$, via transferring the linear from $\tilde{f}_N(\psi)$ to twisted endoscopic groups. 

\begin{thm}\cite[Theorem 2.2.1]{Art13}\label{arthurclassification}
For any $\widetilde{f} \in \widetilde{\mathcal{H}}(N)$, 
$$\wt{f}_N(\psi)=\sum_{\pi \in \widetilde{\Pi}_{\psi}} \langle s_{\psi}, \pi \rangle f_{G_n}(\pi),$$
as two stable distributions on $\widetilde{\mathcal{H}}(N)$, where $f$ is any lifting of the transfer $\wt{f}^{G_n} \in \wt{\mathcal{S}}(G_n)$ to $\wt{\mathcal{H}}(G_n)$,
$f_{G_n}(\pi)=\mathrm{tr}(\pi(f))$, and 
$s_{\psi}=\psi\left(1, 1,  \begin{pmatrix}
-1 &0 \\
0&-1
\end{pmatrix}\right)$. 
\end{thm}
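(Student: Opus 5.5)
This statement is quoted from \cite[Theorem 2.2.1]{Art13}, and in the paper it is used as a black box. A self-contained proof is out of reach here: Arthur obtains it only at the end of a long induction that runs through the stabilized twisted trace formula for $\widetilde{\GL}_N$ and for the quasi-split classical groups. So the plan below is the route I would follow to reconstruct the argument, not a short derivation.

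\textbf{Step 1: reduce to the case where $\psi$ is elliptic and discrete for $G_n$.} Decompose $\psi=\psi_M\oplus\psi_M^\vee\oplus\psi_0$ through a Levi subgroup $M=\GL_{N_1}\times\cdots\times G_{n_0}$. On the $\GL_N$ side, $\wt{\pi}_\psi$ is then the canonically extended parabolic induction. On the $G_n$ side, $\wt{\Pi}_\psi$ is defined as the set of constituents of the induced packets, with the pairing $\langle s_\psi,\cdot\rangle$ inherited from $\psi_0$. Compatibility of twisted transfer with parabolic descent (constant terms) then reduces the identity to the packet of $\psi_0$. The normalization of $\wt{\pi}_\psi(\wt{\theta}(N))$ through the $\wt{\theta}(N)$-stable Whittaker datum is exactly what makes this induction step consistent.

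\textbf{Step 2: globalize the discrete parameter.} Choose a number field $\dot F$ with $\dot F_v=F$ at one place $v$, and a global parameter $\dot\psi$ whose localization at $v$ is $\psi$. At the auxiliary places, $\dot\psi$ should be simple enough, for instance supercuspidal or already known by induction, that the identity is available there.

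\textbf{Step 3: compare discrete spectra via the stabilized trace formulas.} Compare the $\dot\psi$-component of the discrete part of the twisted trace formula for $\widetilde{\GL}_N(\mathbb{A})$ with the stabilized discrete parts for all twisted endoscopic data $G'$. Induction on $N$ kills the contributions of proper endoscopic groups, and of nonelliptic parameters, except for terms that are already established. What remains is an identity between $\wt f_N(\dot\psi)$ and the stable linear form $S^{G_n}_{\mathrm{disc},\dot\psi}(\wt f^{G_n})$. From this one defines the stable linear form $f^{G_n}(\psi)$ locally. A separate argument, using the ordinary endoscopic character relations for $G_n$, shows that $f^{G_n}(\psi)$ expands as $\sum_{\pi}\langle s_\psi x,\pi\rangle f_{G_n}(\pi)$ over a finite multiset of irreducible representations. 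Setting $x=1$ and using that $\wt f^{G_n}$ determines $f$ modulo $\ker$ (stable orbital integrals), and modulo $\widetilde{\mathrm{Out}}_N(G_n)$, gives the displayed identity.

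\textbf{Main obstacle.} The hardest part is Step 3. The cancellation of the non-stable terms requires the full stabilization of the twisted trace formula, together with the fundamental lemma and the weighted fundamental lemma. It also requires the intricate ``standard model'' bookkeeping of sign characters $\epsilon_\psi$, which is the source of the factor $\langle s_\psi,\pi\rangle$. For the purposes of this paper I would simply invoke Arthur's result, together with Mok's analogue, as stated. Any wavefront-set argument should use the identity only in the form given: an equality of stable distributions on $\wt{\mathcal{H}}(N)$, paired with the transfer $\wt f\mapsto\wt f^{G_n}$.
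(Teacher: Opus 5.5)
Your proposal agrees with the paper, which gives no proof of its own: the statement is imported directly from \cite[Theorem 2.2.1]{Art13}, namely part (a) combined with part (b) at $x=1$, so that $f^{G_n}(\psi)=\wt f_N(\psi)$ and the coefficients are $\langle s_\psi,\pi\rangle$, and it is then used as a black box exactly as you intend. Your outline of Arthur's global trace-formula argument is supplementary context; the paper neither reproduces nor relies on it.
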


At the end of this section, we give the proof for Theorem \ref{wfslinear}. 

\textbf{Proof of Theorem \ref{wfslinear}}.
Recall from \eqref{lap} that the local Arthur parameter is as follows
$$\psi: W_F \times \SL_2(\mathbb{C}) \times \SL_2(\mathbb{C}) \rightarrow {}^L\mathrm{G}_n$$
$$\psi = \bigoplus_{i=1}^r \phi_i \otimes S_{m_i} \otimes S_{n_i}.$$
Then 
$$\phi_{\psi}(w,x)=\bigoplus_{i=1}^r 
\bigoplus_{j=-\frac{n_i-1}{2}}^{\frac{n_i-1}{2}} |w|^j\phi_i(w) \otimes S_{m_i} (x).$$

For $1 \leq i \leq r$, let $a_i=k_im_i$ and $b_i=n_i$, and let 
$$\Delta_i=\delta[v^{-\frac{m_i-1}{2}}r(\phi_i), v^{\frac{m_i-1}{2}}r(\phi_i)]$$ be the irreducible square-integrable representation attached to the balanced segment 
$[v^{-\frac{m_i-1}{2}}r(\phi_i), v^{\frac{m_i-1}{2}}r(\phi_i)]$, namely, $\Delta_i$ is the unique irreducible subrepresentation of the following induced representation
$$v^{\frac{m_i-1}{2}}r(\phi_i) \times v^{\frac{m_i-2}{2}}r(\phi_i)\times \cdots \times v^{\frac{1-m_i}{2}}r(\phi_i).$$
Here the map $r$ is the local Langlands correspondence for general linear groups. 
Let $\zeta(\Delta_i, b_i)$ be the unique irreducible quotient of the following induced representation
$$v^{\frac{b_i-1}{2}}\Delta_i \times v^{\frac{b_i-3}{2}}\Delta_i \times \cdots 
\times 
v^{\frac{1-b_i}{2}}\Delta_i.$$
Then the representation $\pi_{\psi}$ corresponding to $\phi_{\psi}$ under the local Langlands correspondence is as follows
$$\pi_{\psi} = \times_{i=1}^r \zeta(\Delta_i, b_i).$$

Note that $\frak{p}^m(\zeta(\Delta_i, b_i))=\{[a_i^{b_i}]\}$, and by \cite[Section 2.1]{MW87}, to compute $\frak{p}^m(\pi_{\psi})$, we only need to compute the induced orbit, which is 
\MATHBLOCKDOLLARDOLLAR[[a_1^{b_1}]]{ + \cdots + [a_r^{b_r}],}
by \cite[Lemma 7.2.5]{CM93}. 
Since 
\MATHBLOCKDOLLARDOLLAR[[a_1^{b_1}]]{ + \cdots + [a_r^{b_r}] =[b_1^{a_1} \cdots b_r^{a_r}]^t,}  
we have shown that 
$$\frak{p}^m({\pi}_{\psi})=\{[b_1^{a_1} \cdots b_r^{a_r}]^t\}.$$
This completes the proof of Theorem \ref{wfslinear}.  \qed

\section{Twisted Jacquet modules associated to nilpotent orbits}\label{def of FC's}


In this section, we recall certain twisted Jacquet modules associated to nilpotent orbits, following the formulation in \cite{GGS17}.

Let $\RG$ be a reductive group defined over a nonarchimedean local field $F$, and $\mathfrak{g}$ be the Lie algebra of $G=\RG(F)$. 
Given any semi-simple element $s \in \mathfrak{g}$, under the adjoint action, $\mathfrak{g}$ is decomposed into a direct sum of eigenspaces $\mathfrak{g}^s_i$ corresponding to eigenvalues $i$.
The element
$s$ is called {\it rational semi-simple} if all its eigenvalues are in $\BQ$.
Given a nilpotent element $u$ and a simi-simple element $s$ in $\mathfrak{g}$, the pair $(s,u)$ is called a {\it Whittaker pair} if $s$ is a rational semi-simple element, and $u \in \mathfrak{g}^s_{-2}$. The element $s$ in a Whittaker pair $(s, u)$ is called a {\it neutral element} for $u$ if there is a nilpotent element $v \in \mathfrak{g}$ such that $(v,s,u)$ is an $\mathfrak{sl}_2$-triple. A Whittaker pair $(s, u)$ with $s$ being a neutral element is called a {\it neutral pair}. 

Given any Whittaker pair $(s,u)$, define an anti-symmetric form $\omega_u$ on $\mathfrak{g}\times \mathfrak{g}$ by 
$$\omega_u(X,Y):=\kappa(u,[X,Y])\,,$$
here $\kappa$ is the Killing form on $\mathfrak{g}$.
For any rational number $r \in \BQ$, let $\mathfrak{g}^s_{\geq r} = \oplus_{r' \geq r} \mathfrak{g}^s_{r'}$. 
 Let $\mathfrak{u}_s= \mathfrak{g}^s_{\geq 1}$ and let $\mathfrak{n}_{s,u}$ be the radical of $\omega_u |_{\mathfrak{u}_s}$. Then $[\mathfrak{u}_s, \mathfrak{u}_s] \subset \mathfrak{g}^s_{\geq 2} \subset \mathfrak{n}_{s,u}$. 
For any $X \in \mathfrak{g}$, let $\mathfrak{g}_X$ be the centralizer of $X$ in $\mathfrak{g}$.  
By \cite[Lemma 3.2.6]{GGS17}, one has $\mathfrak{n}_{s,u} = \mathfrak{g}^s_{\geq 2} + \mathfrak{g}^s_1 \cap \mathfrak{g}_u$.
Note that if the Whittaker pair $(s,u)$ comes from an $\mathfrak{sl}_2$-triple $(v,s,u)$, then $\mathfrak{n}_{s,u}=\mathfrak{g}^s_{\geq 2}$. Let 
$N_{s,u}=\exp(\mathfrak{n}_{s,u})$ be the corresponding unipotent subgroup of $G$. 
We define a character of $N_{s,u}$ by $$\psi_u(n)=\psi(\kappa(u,\log(n)))\,,$$
here $\psi: F \rightarrow \BC^{\times}$ is a fixed non-trivial additive character.

Let $\pi$ be an irreducible admissible representation of $G$. The {\it twisted Jacquet module} of $\pi$ associated to 
a Whittaker pair $(s,u)$ is defined to be  
$\pi_{N_{s,u}, \psi_u}$.
Let $\mathfrak{n}_{w}(\pi)$ be the set of nilpotent orbits $\CO$ such that $\pi_{N_{s,u}, \psi_u}$ is non-zero for some neutral pair $(s,u)$ with $u \in \CO$. Note that if $\pi_{N_{s,u}, \psi_u}$ is non-zero for some neutral pair $(s,u)$ with $u \in \CO$, then it is non-zero for any such neutral pair $(s,u)$, since the non-vanishing property of such models doesn't depend on the choices of representatives of $\CO$.
Moreover, we denote by $\mathfrak{n}_w^m(\pi)$ the set of maximal elements in $\mathfrak{n}_w(\pi)$ under the natural partial ordering of nilpotent orbits (i.e., $\CO_1 \leq \CO_2$ if $\CO_1 \subset \overline{\CO_2}$, the Zariski closure of $\CO_2$). Then we have the following relation between $\mathfrak{n}^m(\pi)$ and $\mathfrak{n}_w^m(\pi)$.

\begin{thm}[\cite{MW87}, \cite{Var14}]
For any irreducible admissible representation $\pi$ of $G$,
$$\mathfrak{n}^m(\pi)= \mathfrak{n}_w^m(\pi).$$
\end{thm}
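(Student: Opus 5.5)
The statement to be proved is $\mathfrak{n}^m(\pi)=\mathfrak{n}_w^m(\pi)$: the maximal orbits in the Harish-Chandra--Howe expansion coincide with the maximal orbits supporting nonzero degenerate Whittaker models attached to neutral pairs. I will follow the Mœglin--Waldspurger strategy \cite{MW87}, in the formulation of Varma \cite{Var14} for arbitrary residue characteristic. The core is a multiplicity formula. For a maximal orbit $\CO\in\mathfrak{n}^m(\pi)$ and a neutral pair $(s,u)$ with $u\in\CO$, one shows
$$\dim \pi_{N_{s,u},\psi_u} = c_{\CO}(\pi).$$
One also shows that if $\pi_{N_{s,u},\psi_u}\neq 0$, then $\CO_u$ lies in the closure of some orbit in $\mathfrak{n}(\pi)$. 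Granting these two facts, both inclusions between the maximal sets follow formally. A maximal orbit in one set is bounded by an element of the other set. Nonvanishing on one side then gives nonvanishing on the other, and maximality forces equality.

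To prove the multiplicity formula I proceed in four steps.

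First, I realise the twisted Jacquet module as a limit. Using $\exp$, I shrink a small lattice neighbourhood: choose open compact subgroups $G_t=\exp(t\cdot\mathfrak{l})$ with $t\in F^\times$ small. On these I define characters $\chi_t$ that restrict to $\psi_u$ on $N_{s,u}\cap G_t$, conjugated by the one-parameter torus $\lambda(t)$ attached to $s$. The dimension of $\chi_t$-isotypic vectors then stabilises to $\dim \pi_{N_{s,u},\psi_u}$ for $|t|$ small. This uses the neutral-pair property, so that $\mathfrak{n}_{s,u}=\mathfrak{g}^s_{\geq 2}$ and $\mathfrak{g}^s_1$ carries a nondegenerate form, allowing a Heisenberg/Weil-type averaging.

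Second, I express $\dim \pi^{(G_t,\chi_t)}=\Theta_\pi(f_t)$, where $f_t$ is the normalised matrix coefficient of $\chi_t^{-1}$ on $G_t$. Since $f_t$ is supported near the identity for small $t$, I insert the local character expansion $\Theta_\pi(\exp X)=\sum_{\CO'} c_{\CO'}(\pi)\hat\mu_{\CO'}(X)$.

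Third, I compute $\hat\mu_{\CO'}(f_t\circ\exp)=\mu_{\CO'}(\widehat{f_t})$. Here $\widehat{f_t}$ is essentially the characteristic function of a small neighbourhood of $u$ scaled by $\lambda(t)$. Homogeneity of $\mu_{\CO'}$ under this scaling shows the term vanishes as $t\to0$ unless $\CO'\ge \CO_u$. When $\CO'=\CO_u$, the term tends to $1$ under the normalisation of orbital measures. This gives both the vanishing statement and the formula.

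The main obstacle is the third step, together with the stabilisation in the first. One must control the intersection of orbits $\CO'$ with the shrinking neighbourhoods $u+\lambda(t)\mathfrak{l}$. This requires a transversal-slice (Slodowy-type) argument showing that $\CO'\cap(u+\text{small})$ is nonempty only when $u\in\overline{\CO'}$, with measure scaling exactly as $|t|^{\dim\CO'/2}$-type factors. In small residue characteristic the exponential map and Killing form are unavailable. There I would follow \cite{Var14} and replace $\exp$ by a suitable Cayley-type map, checking the required compatibility of lattices. I would treat that as the technical heart and take the remaining bookkeeping as routine.
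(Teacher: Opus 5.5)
Your outline is the M{\oe}glin--Waldspurger argument, which is all the paper relies on here: the theorem is quoted from \cite{MW87}, \cite{Var14} without proof. Your formal deduction of $\mathfrak{n}^m(\pi)=\mathfrak{n}_w^m(\pi)$ from your two facts is correct. But the multiplicity formula you put at the core, $\dim \pi_{N_{s,u},\psi_u}=c_{\CO}(\pi)$, is false as written.

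For a neutral pair, $N_{s,u}=\exp(\mathfrak{g}^s_{\geq 2})$, and $\psi_u$ is stable under $U=\exp(\mathfrak{g}^s_{\geq 1})$ because $\kappa(u,\mathfrak{g}^s_{\geq 3})=0$. Hence $\pi_{N_{s,u},\psi_u}$ is a smooth module for the Heisenberg group obtained from $U$ by dividing out $\exp(\{Y\in\mathfrak{g}^s_{\geq 2}:\kappa(u,Y)=0\})$. Its symplectic space is $(\mathfrak{g}^s_1,\omega_u)$, which is nondegenerate exactly because $s$ is neutral, and its centre acts by a nontrivial character. By Stone--von Neumann, $\pi_{N_{s,u},\psi_u}\cong\varrho_u\otimes M$, where $\varrho_u$ is the Heisenberg representation. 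This $\varrho_u$ is infinite-dimensional as soon as $\mathfrak{g}^s_1\neq 0$, i.e.\ for every non-even orbit; this already happens for the orbit $[21]$ in $\mathfrak{gl}_3$ with $s=\mathrm{diag}(1,0,-1)$.

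So for non-even orbits $\dim\pi_{N_{s,u},\psi_u}\in\{0,\infty\}$, and it cannot equal the finite number $c_{\CO}(\pi)$. Nor can the finite numbers $\dim\pi^{(G_t,\chi_t)}$ in your Step 1 stabilise to it. The Heisenberg structure you mention must instead be used to pass to the actual degenerate Whittaker model of \cite{MW87}:
choose a Lagrangian $\mathfrak{l}\subset\mathfrak{g}^s_1$ for $\omega_u$. Since $\kappa(u,\mathfrak{g}^s_1)=0$ and $\omega_u(\mathfrak{l},\mathfrak{l})=0$, $\psi_u$ extends to a character $\psi'_u$ of $N'=\exp(\mathfrak{l}\oplus\mathfrak{g}^s_{\geq 2})$, and $\pi_{N_{s,u},\psi_u}\cong\varrho_u\otimes\pi_{N',\psi'_u}$. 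The correct statement is $\dim\pi_{N',\psi'_u}=c_{\CO}(\pi)$ for $\CO\in\mathfrak{n}^m(\pi)$; equivalently, $c_{\CO}(\pi)$ is the multiplicity of $\varrho_u$ in $\pi_{N_{s,u},\psi_u}$. Since the two modules vanish simultaneously, $\mathfrak{n}_w(\pi)$ is unaffected and your formal argument then goes through.

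Correspondingly, Step 1 has to compare $\pi^{(G_t,\chi_t)}$ with $\pi_{N',\psi'_u}$, and it must supply two distinct facts. First, $\pi_{N',\psi'_u}\neq 0$ must force $\pi^{(G_t,\chi_t)}\neq 0$ for arbitrarily small $|t|$; combined with Step 3, this is your vanishing statement. Second, for $\CO$ maximal the dimensions must coincide for small $|t|$. These are the substantive lemmas of \cite{MW87}.

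In Step 3, the fact that $\CO'\cap(u+\text{small})$ is nonempty only when $u\in\overline{\CO'}$ is just the definition of the closure and needs no slice argument. The real work there is the exact evaluation and normalisation of the $\CO_u$-term.

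Finally, your description of the $p=2$ issue is off. Over a $p$-adic field of characteristic zero, $\exp$ is defined near $0$ and a nondegenerate invariant form exists for every $p$, so neither is what goes wrong in residue characteristic $2$. The difficulty addressed in \cite{Var14} is an integrality problem in constructing the pairs $(G_t,\chi_t)$, and an unspecified ``Cayley-type map'' does not address it.
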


We recall \cite[Theorem C]{GGS17} as follows, which will be used in Section 5. 

\begin{prop}[Theorem C, \cite{GGS17}]\label{ggslocal}
Let $\pi$ be an irreducible admissible representation of $G$.
Given a neutral pair $(s,u)$ and a Whittaker pair $(s',u)$, if $\pi_{N_{s',u}, \psi_u}$ is non-zero, then $\pi_{N_{s,u}, \psi_u}$ is non-zero.
\end{prop}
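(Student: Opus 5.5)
The plan is to follow the deformation strategy of \cite{GGS17}: connect the given Whittaker pair $(s',u)$ to a neutral pair by a straight-line path of rational semi-simple elements, and carry the non-vanishing of the twisted Jacquet module along that path. The first step is a reduction to a convenient neutral element. By the remark after the definition of $\mathfrak{n}_w(\pi)$, non-vanishing of $\pi_{N_{s,u},\psi_u}$ for a neutral pair does not depend on the neutral element chosen for $u$. I would use the Jacobson--Morozov/Kostant theory applied inside the Levi subalgebra $\mathfrak{g}^{s'}_0$, which contains the centralizer of $s'$. Since $u\in\mathfrak{g}^{s'}_{-2}$, one can find an $\mathfrak{sl}_2$-triple $(v,h,u)$ with $h$ commuting with $s'$. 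Set $Z:=s'-h$. Then $Z$ commutes with $h$ and $[Z,u]=0$, so $Z$ lies in the centralizer of the triple. After conjugating by the centralizer of $u$, we may take $s=h$.

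The second step is the deformation. For $t\in[0,1]\cap\BQ$ put $s_t=s+tZ$. Each $(s_t,u)$ is a Whittaker pair, because $Z$ commutes with $u$, and $s_0=s$ is neutral while $s_1=s'$. The eigenspace decomposition of $\mathfrak{g}$ under $s_t$ is the simultaneous decomposition under $(h,Z)$, and the weights vary linearly in $t$. Hence the subalgebras $\mathfrak{u}_{s_t}$ and $\mathfrak{n}_{s_t,u}$ are locally constant in $t$ away from finitely many critical values $0<t_1<\dots<t_k<1$. It therefore suffices to prove the following: if $t<t'$ are adjacent (one of them possibly critical), then $\pi_{N_{s_{t'},u},\psi_u}\neq0$ implies $\pi_{N_{s_t,u},\psi_u}\neq0$. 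Iterating this from $t=1$ down to $t=0$ gives the proposition.

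The third step, which I expect to be the main obstacle, is this comparison at a single crossing. Here I would use the explicit description $\mathfrak{n}_{s,u}=\mathfrak{g}^s_{\geq2}+\mathfrak{g}^s_1\cap\mathfrak{g}_u$ from \cite[Lemma 3.2.6]{GGS17}. The intersection $\mathfrak{l}=\mathfrak{n}_{s_t,u}\cap\mathfrak{n}_{s_{t'},u}$ has codimension-type complements $\mathfrak{a}\subset\mathfrak{n}_{s_t,u}$ and $\mathfrak{b}\subset\mathfrak{n}_{s_{t'},u}$. These are made of eigenvectors whose $s$-weights cross the thresholds $1$ or $2$ at the critical value. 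I would check that $\psi_u$ is trivial on $[\mathfrak{a},\mathfrak{b}]$ modulo $\mathfrak{l}$ and that $\omega_u$ pairs $\mathfrak{a}$ and $\mathfrak{b}$ nondegenerately, using that $\kappa(u,\cdot)$ only sees the $-2$... more precisely, the $+2$-eigenspace of $\mathrm{ad}\,s_t$. These are exactly the hypotheses of the root-exchange lemma, which gives
\[
\pi_{N_{s_{t'},u},\psi_u}\neq0 \iff \pi_{\exp(\mathfrak{l}+\mathfrak{a}+\mathfrak{b}),\psi_u}\neq0 \iff \pi_{N_{s_t,u},\psi_u}\neq0 .
\]
Where the pairing is degenerate, namely on the part lying in $\mathfrak{g}_u$, I would instead take a Fourier expansion along the abelian quotient and absorb the extra directions. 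Every character that appears is conjugate to $\psi_u$ under the unipotent group, and this is where the fact that $Z$ centralizes $u$ is used. The delicate point is the careful bookkeeping of the weight-$1$ spaces at the critical value, which is why I would work eigenspace by eigenspace for the commuting pair $(h,Z)$.
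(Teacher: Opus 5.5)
The paper gives no argument of its own here; it simply quotes \cite[Theorem C]{GGS17}. Your architecture is the one used there: a neutral $h$ commuting with $s'$, the path $s_t=h+tZ$, and a comparison at each crossing. The gap is in your third step.

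\textbf{The two-sided equivalence is false.} The statement is genuinely one-directional, so your displayed chain of equivalences cannot hold. Your device for the degenerate directions is also false: a Fourier expansion along directions in $\mathfrak g_u$ produces characters that are \emph{not} conjugate to $\psi_u$.

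For example, take $G=\SL_3(F)$, $u=E_{21}$, $h=\mathrm{diag}(1,-1,0)$ and $Z=\mathrm{diag}(1,1,-2)$, so $s_t=\mathrm{diag}(1+t,-1+t,-2t)$.
\begin{itemize}
\item For $0<t<2/3$ one has $\mathfrak n_{s_t,u}=\mathrm{span}(E_{12},E_{13})$.
\item For $2/3\leq t\leq 1$, $\mathfrak n_{s_t,u}$ is the full upper triangular nilradical.
\item At $t=2/3$ the vector $E_{23}\in\mathfrak g_u$ enters and nothing leaves. So your $\mathfrak a$ is $0$ and $\mathfrak b=FE_{23}$ is entirely radical.
\item Expanding along $E_{23}$ gives the characters $n\mapsto\psi_u(n)\psi(c\,n_{23})$. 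For $c\neq 0$ these are generic characters (attached to the regular orbit), so they are not conjugate to $\psi_u$.
\item Correspondingly, for supercuspidal $\pi$ the model at $t=1$ vanishes, since it factors through the Jacquet module along the unipotent radical of the $(2,1)$ parabolic. The model for $0<t<2/3$ does not vanish (restrict a Whittaker functional).
\end{itemize}
Also, when $\mathfrak a$ and $\mathfrak b$ are paired, $\psi_u$ is not a character of $\exp(\mathfrak l+\mathfrak a+\mathfrak b)$. So the middle term of your display is undefined; root exchange compares $\exp(\mathfrak l+\mathfrak a)$ and $\exp(\mathfrak l+\mathfrak b)$ directly.

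\textbf{The missing idea is an asymmetry that makes the one-way step work.} At a critical value $t_c$, let $V_+,V_-,V_0$ be the subspaces of $\mathfrak g^{s_{t_c}}_1$ on which $Z$ has positive, negative and zero eigenvalues. Put $\mathfrak l=\mathfrak g^{s_{t_c}}_{>1}\oplus(V_0\cap\mathfrak g_u)$. Then
\begin{itemize}
\item $\mathfrak n_{s_{t_c-\epsilon},u}=\mathfrak l\oplus V_-$,
\item $\mathfrak n_{s_{t_c},u}=\mathfrak l\oplus(V_+\cap\mathfrak g_u)$,
\item $\mathfrak n_{s_{t_c+\epsilon},u}=\mathfrak l\oplus V_+$.
\end{itemize}
On $V_-$ the $h$-eigenvalues are $1-t_c\beta\geq 1$, where $\beta<0$ is the $Z$-eigenvalue. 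But $\mathfrak g_u$ has only non-positive $h$-eigenvalues, since $u$ is the lowering element of the triple. Hence $V_-\cap\mathfrak g_u=0$: the radical lies entirely on the larger-$t$ side. Moreover, $\omega_u$ pairs $V_-$ perfectly with any $Z$-stable complement $W$ of $V_+\cap\mathfrak g_u$ in $V_+$.

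The correct step therefore has two parts.
\begin{enumerate}
\item Discard the radical directions by restriction: the $\psi_u$-coinvariants of a group are a quotient of those of any subgroup.
\item Exchange $W$ for $V_-$ by the root exchange lemma. Its proof is your Fourier expansion, but taken along $V_-$ and undone by conjugation with $\exp W$. Since $\kappa(u,\cdot)$ kills all nonzero $Z$-weight spaces, $\psi_u$ is a character on both $\exp(\mathfrak l\oplus W)$ and $\exp(\mathfrak l\oplus V_-)$.
\end{enumerate}
This gives exactly the implication you need. It is also why \cite{GGS17} obtain an epimorphism from the neutral model onto the degenerate one, rather than an isomorphism.

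\textbf{Two smaller corrections.}
\begin{itemize}
\item For non-integral $s_t$ the radical is $\mathfrak g^{s_t}_{>1}\oplus(\mathfrak g^{s_t}_1\cap\mathfrak g_u)$; the version with $\mathfrak g^s_{\geq 2}$ is only right for integral gradings. So only crossings of the value $1$ matter, not $2$.
\item Since $u\notin\mathfrak g^{s'}_0$, you cannot get $h$ from Jacobson--Morozov inside $\mathfrak g^{s'}_0$. The neutral $h$ commuting with $s'$ needs a graded Jacobson--Morozov argument. For instance, the split torus generated by $s'$ acts by affine maps on Kostant's affine space of neutral elements for $u$, and so has a fixed point there.
\end{itemize}
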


\section{Dimension identities for nilpotent orbits}\label{dimidentities}

In this section, we prove certain dimension identities for nilpotent orbits which are important ingredients for our main results.

Recall that $\psi = \bigoplus_{i=1}^r \phi_i \otimes S_{m_i} \otimes S_{n_i}$, where $\phi_i$ is of dimension $k_i$, $a_i=k_im_i$, $b_i=n_i$, and $N=\sum_{i=1}^r a_ib_i$. 
For $1 \leq i \leq r$, let $\psi_i=\phi_i \otimes S_{m_i} \otimes S_{n_i}$. Let $\{1,2,\ldots, r\}=I \dot\cup J$ such that $I=\{i|s_{\psi_i}=I_{a_ib_i}\}$, $J=\{i|s_{\psi_i}=-I_{a_ib_i}\}$.
By definition, $s_{\psi}=\psi\left(1,1, \begin{pmatrix}
-1 &0 \\
0&-1
\end{pmatrix}\right)$, and
\begin{align*}
    &\,\phi_i \otimes S_{m_i} \otimes S_{b_i}
    \left(1,1, \begin{pmatrix}
-1 &0 \\
0&-1
\end{pmatrix}\right)\\
=&\,\begin{pmatrix}
(-1)^{b_i-1}I_a&&&\\
&(-1)^{b_i-3}I_a&&\\
&&\ddots&\\
&&&(-1)^{1-b_i}I_a
\end{pmatrix}.
\end{align*}
Hence, 
$I=\{i|b_i \text{ odd}\}$, $J=\{i|b_i \text{ even}\}$, and 
automatically, we have $\sum_{i\in I}a_ib_i=N_{n_1}$, $\sum_{j \in J}a_jb_j=2n_2$, and $n=n_1+n_2$. 
Moreover, $s_{\psi}$ has the form $$\begin{pmatrix}
-I_{n_2}&&\\
&I_{N_{n_1}}&\\
&&-I_{n_2}
\end{pmatrix}.$$

We now apply \cite[Theorem 2.2.1, Part (b)]{Art13} with $x=s=s_{\psi}$. It is clear that the stabilizer of $s$ in $\widehat{\mathrm{G}}_{n}(\BC)$ is
\begin{align*}
    \SO_{2n_1+1}(\BC) \times \SO_{2n_2}(\BC) & \text{ when } \mathrm{G}_n=\Sp_{2n},\\
    \Sp_{2n_1}(\BC) \times \Sp_{2n_2}(\BC) & \text{ when } \mathrm{G}_n=\SO_{2n+1},\\
    \SO_{2n_1}(\BC) \times \SO_{2n_2}(\BC) & \text{ when } \mathrm{G}_n=\SO_{2n}^{\alpha},
\end{align*}
and the corresponding endoscopic group $\mathrm{G}'=\mathrm{G}_1' \times \mathrm{G}_2'$ of $\mathrm{G}_n$ is
\begin{align*}
    \Sp_{2n_1} \times \SO_{2n_2}^{\beta} & \text{ when } \mathrm{G}_n=\Sp_{2n},\\
    \SO_{2n_1+1} \times \SO_{2n_2+1} & \text{ when } \mathrm{G}_n=\SO_{2n+1},\\
    \SO_{2n_1}^{\gamma_1} \times \SO_{2n_2}^{\gamma_2} & \text{ when } \mathrm{G}_n=\SO_{2n}^{\alpha},
\end{align*}
where $\beta, \gamma_1, \gamma_2$ are square classes in $F$, and $\gamma_1\gamma_2=\alpha$.
By \cite[Theorem 2.2.1]{Art13}, we have the following distribution identity
\begin{equation}\label{distribution identity}
    \sum_{\pi \in \wt{\Pi}_{\psi}} \langle s_{\psi} x, \pi \rangle f_{G_n}(\pi) = f'(\psi')=\tr (\wt{\pi}_{\psi^1}(\wt{f}^1)) \tr (\wt{\pi}_{\psi^2}(\wt{f}^2)),
\end{equation}
where $f \in \wt{\CH}(G_n)$, $f' \in \wt{\CS}(G')$ is the transfer of $f$ to $G'$ with the assumption that $f'=f'^1 \otimes f'^2$, $f'^i \in \wt{\CS}(G'_i)$, $i=1, 2$. 
Let $N_1=\begin{cases}
2n_1+1 & \text{ when } \mathrm{G}_n=\Sp_{2n},\\
2n_1 & \text{ when } \mathrm{G}_n=\SO_{2n+1}, \SO_{2n}^{\alpha},
\end{cases}$ and $N_2=2n_2$.
Let $\wt{f}^1 \in \wt{\CH}(N_1)$ 
and $\wt{f}^2 \in \wt{\CH}(N_2)$, transferring to $f'^1$ and $f'^2$, via surjective maps in \cite[Corollary 2.1.2]{Art13}
$$\iota_{G'_i}: \wt{\CH}(N_i) \rightarrow \wt{\CS}(G'_i), i=1,2,$$
respectively. Here, $\psi'$ is the factor through of $\psi$ to ${}^L\mathrm{G}'$, and $\psi^1=\sum_{i\in I}\psi_i$, $\psi^2 = \sum_{j \in J}\psi_j$.

\subsection{Character expansions}\label{sec:character expansion}

In this section, we take the character expansions of both sides of \eqref{distribution identity}.  The left hand side  {is expanded at the identity, and } the right hand side  {is expanded at the twisted points } $\theta_{\widehat{\mathrm{G}}_i'} = 1 \rtimes \wt{\theta}(N_i)$ (see \eqref{thetaGn}), $i=1,2$. 
Then we have the following equality: 
\begin{align}\label{sec6equ1all}
\begin{split}
    &\,  \sum_{\pi \in \wt\Pi_{\psi}} \langle 1, \pi \rangle \sum_{\CO \in \CN_{\frak{g}_{n}}} 
c_{\CO}(\pi)\hat{\mu}_\CO(f) \\
= &\,  \left(\sum {_{\CO \in \CN_{\frak{g_1'}^{\theta}}}
} c_{\CO}(\wt{\pi}_{\psi^1})\hat{\mu}_\CO({{}\wt{f}^1_{\theta_{\widehat{\mathrm{G}}_1'}}})\right) \left(\sum {_{\CO \in \CN_{\frak{g_2'}^{\theta}}}
} c_{\CO}(\wt{\pi}_{\psi^2})\hat{\mu}_\CO({{}\wt{f}^2_{\theta_{\widehat{\mathrm{G}}_2'}}})\right),
\end{split}
\end{align}
where $\CN_{\frak{g_i'}^{\theta}}$ denotes the set of $F$-rational nilpotent orbits in the Lie algebra $\frak{g_i'}^{\theta}$, {{}$\wt{f}_{\theta_{\widehat{\mathrm{G}}_i'}}^i$ is the Harish-Chandra descent of $\widetilde{f}^i$ (see for example \cite[Section 3.1]{Kon02}), $i=1,2$.} 
Equation \eqref{sec6equ1all} can be rewritten as follows:
\begin{align}\label{sec6equ2all}
\begin{split}
    &\,  \sum_{\CO \in \CN_{\frak{g}_{n}}}
c_{\CO}(\wt{\Pi}_{\psi})\hat{\mu}_\CO(f) \\
= &\,  \left(\sum {_{\CO \in \CN_{\frak{g_1'}^{\theta}}}
} c_{\CO}(\wt{\pi}_{\psi^1})\hat{\mu}_\CO({{}\wt{f}^1_{\theta_{\widehat{\mathrm{G}}_1'}}})\right) \left(\sum {_{\CO \in \CN_{\frak{g_2'}^{\theta}}}
} c_{\CO}(\wt{\pi}_{\psi^2})\hat{\mu}_\CO({{}\wt{f}^2_{\theta_{\widehat{\mathrm{G}}_2'}}})\right),
\end{split}
\end{align}
where $c_{\CO}(\wt{\Pi}_{\psi})=\sum_{\pi \in \wt\Pi_{\psi}} 
c_{\CO}(\pi)$. Note that 
${\RG'_1}^{\theta}=\begin{cases}
\SO_{2n_1+1} & \text{ when } \mathrm{G}_n=\Sp_{2n},\\
\Sp_{2n_1} & \text{ when } \mathrm{G}_n=\SO_{2n+1}, \SO_{2n}^{\alpha},
\end{cases}$, ${\RG'_2}^{\theta}=\Sp_{2n_2}$, and $\frak{g_i'}^{\theta}$ is the Lie algebra of ${\RG'_i}^{\theta}$.

\subsection{Dimension identities for nilpotent orbits}

Recall again that given a partition $\ul{p}$ of $M$, where
$$M:= 
\begin{cases}
2n & \text{ when } \mathrm{G}_n=\Sp_{2n}, \SO_{2n}^{\alpha}, \\
2n+1 & \text{ when } \mathrm{G}_n=\SO_{2n+1},
\end{cases}
$$ 
$\ul{p}_{\RG_n}$ denotes the $\RG_n$-collapse of $\ul{p}$, that is, the biggest $\RG_n$-partition smaller than or equal to $\ul{p}$, and $\ul{p}^{\RG_n}$ denotes the $\RG_n$-expansion of $\ul{p}$, that is, the smallest special $\RG_n$-partition bigger than or equal to $\ul{p}$. For the definition of special partitions, see \cite[Section 6.3]{CM93}.

We first record a dimension identity as follows.  

\begin{lem}\label{keylemma2}
We have the following dimension identity:
\begin{align}\label{equ1:keylemma2}
\begin{split}
    &\, \dim(\frak{g}_{n}) - \dim_{\frak{g}_{n}}(\eta_{\hat{\frak{g}}_n, \frak{g}_{n}}([b_1^{a_1} \cdots b_r^{a_r}])) \\
    = &\, \dim(\hat{\frak{g}}'_1) + \dim(\hat{\frak{g}}'_2) -\dim_{\hat{\frak{g}}'_1}(([\prod_{i\in I}b_i^{a_i}]^t)_{\widehat{\mathrm{G}}_1'})-\dim_{\hat{\frak{g}}'_2}(([\prod_{j\in J}b_j^{a_j}]^t)_{\widehat{\mathrm{G}}_2'}).
    \end{split}
\end{align}
\end{lem}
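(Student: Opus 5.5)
The plan is to turn \eqref{equ1:keylemma2} into an identity between centralizer dimensions, using $\dim(\frak{g}_n)=\dim(\hat{\frak{g}}_n)$. Write $\ul{p}=\ul{p}(\psi)=[b_1^{a_1}\cdots b_r^{a_r}]$, $\ul{p}_I=[\prod_{i\in I}b_i^{a_i}]$ and $\ul{p}_J=[\prod_{j\in J}b_j^{a_j}]$, so that $\ul{p}$ is the union of its odd-part piece $\ul{p}_I$ and its even-part piece $\ul{p}_J$. For an orbit with partition $\ul{q}$ and dual partition $\ul{q}^t=[q^t_1 q^t_2\cdots]$ we use the standard formulas (\cite[Corollary 6.1.4]{CM93}):
\begin{align*}
\dim \frak{sp}(\ul{q}) &= \tfrac12\textstyle\sum_k (q^t_k)^2+\tfrac12\#\{k: q_k \text{ odd}\},\\
\dim \frak{so}(\ul{q}) &= \tfrac12\textstyle\sum_k (q^t_k)^2-\tfrac12\#\{k: q_k \text{ odd}\}.
\end{align*}
Here $\frak{sp}(\ul{q})$, $\frak{so}(\ul{q})$ denote the centralizer of a nilpotent element of type $\ul{q}$. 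Then the left side of \eqref{equ1:keylemma2} is $\dim \frak{g}_{n,X}$ with $X\in\eta(\ul{p})$, and the right side is the sum of the centralizer dimensions of $(\ul{p}_I^t)_{\widehat{\mathrm{G}}_1'}$ in $\hat{\frak{g}}_1'$ and of $(\ul{p}_J^t)_{\widehat{\mathrm{G}}_2'}$ in $\hat{\frak{g}}_2'$. The task is to evaluate these three centralizer dimensions explicitly.

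The first step is to describe the collapses. Since $\ul{p}_I$ has only odd parts, the parts of $\ul{p}_I^t$ are $\#\{\text{parts}\ge k\}$ and change only across an odd part size. This makes the orthogonal or symplectic collapse in $\widehat{\mathrm{G}}_1'$ an explicit ``move one box down'' operation on each offending pair of columns. Similarly, $\ul{p}_J$ has only even parts with $\sum_{j\in J}a_jb_j=2n_2$, and its transpose comes in equal consecutive column pairs $(\ul{p}_J^t)_{2k-1}=(\ul{p}_J^t)_{2k}$. From this one should check that $\ul{p}_J^t$ is already an $\SO_{2n_2}$- (resp.\ $\Sp_{2n_2}$-) partition, or needs only the mildest collapse. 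Likewise, write $\eta(\ul{p})$ via Definition \ref{BV duality map}, e.g.\ $((\ul{p}^t)^-)_{\Sp_{2n}}$. Because $\ul{p}^t$ is the row-wise sum $\ul{p}_I^t+\ul{p}_J^t$, the parts of $\ul{p}^t$ and their parities are determined by the two pieces.

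The second step, which I expect to be the main obstacle, is the bookkeeping. For each of the three types of $\mathrm{G}_n$ we must show that the quadratic term $\tfrac12\sum (q^t_k)^2$ for $\eta(\ul{p})$ splits as the sum of the corresponding terms for the two collapsed pieces. We must also track exactly how the collapse and the $\pm$ operations change the sums of squares and the counts of odd parts. Concretely, $\eta(\ul{p})^t$ is, up to the collapse, essentially $\ul{p}^-$, $\ul{p}^+$ or $\ul{p}$. Its columns are the parts $b_i$, so $\sum_k(\eta(\ul{p})^t_k)^2\approx\sum_i a_ib_i^2$, which splits over $I\cup J$. Each collapse step (moving one box from column $k$ to column $k'>k$ with $q_k=q_{k'}+2$) changes the square sum by a fixed amount and changes the parity counts in a controlled way. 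I would verify, type by type, that these correction terms match on both sides, and handle the $\pm$ on the first or last part separately.

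As a fallback, or as a conceptual cross-check, I would use the Levi description. The orbit $\ul{p}(\psi)$ is the saturation of a regular or distinguished orbit from a Levi $\hat{L}\cong\prod \GL_{b}\times\widehat{\mathrm{G}}_0$. By Barbasch--Vogan/Spaltenstein, $\eta(\ul{p}(\psi))$ is then the orbit induced from the dual data on $L$, and codimension is preserved under induction. This reduces both sides of \eqref{equ1:keylemma2} to the same Levi-level quantity, and the odd and even blocks separate exactly as $I$ and $J$.
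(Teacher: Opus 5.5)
Your primary route is the paper's route, and the set-up is right. By \cite[Corollary 6.1.4]{CM93} both sides are centralizer dimensions. As the paper checks, for $\mathrm{G}_n=\Sp_{2n},\SO_{2n+1}$ neither $\ul{p}_I^t$ nor $\ul{p}_J^t$ needs collapsing, so the right-hand square sums total exactly $\sum_i a_ib_i^2$. And $\eta(\ul{p})^t$ equals $(\ul{p}^-)_{\Sp_{2n}}$, $(\ul{p}^+)_{\SO_{2n+1}}$, resp.\ $(\ul{p}^{+-})_{\Sp_{2n}}$ by \cite[Lemma 3.3]{Ach03}.

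But the proposal stops exactly where the proof begins, and the mechanism you offer for the remaining bookkeeping is not correct. The square sums do not split. For $\mathrm{G}_n=\Sp_{2n}$, with $\lambda=(\ul{p}^-)_{\Sp_{2n}}$, what has to be shown is \eqref{equ3:keylemma2}: that $\sum_i a_ib_i^2-\sum_k\lambda_k^2$ equals the number of odd parts of $\ul{p}_I^t$ plus the number of odd parts of $\eta(\ul{p})=\lambda^t$. The latter is the alternating sum $\lambda_1-\lambda_2+\lambda_3-\cdots$.

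Nor is a collapse step a move of one box between two parts differing by exactly $2$ at a fixed cost. For $\ul{p}=[7,3,1]$ and $\mathrm{G}_n=\Sp_{10}$, $\ul{p}^-=[7,3]$ collapses in one step to $[6,4]$, moving a box from $7$ to $3$ and changing the square sum by $-6$. In general the $C$-collapse pairs off, from the top, consecutive odd parts $2m'+1>2m+1$ of odd multiplicity. It replaces one copy of each by $2m'$ and $2m+2$, and also replaces each intermediate $(2q+1)^h$ ($h$ even) by $(2q+2)(2q+1)^{h-2}(2q)$. The resulting change $-4(m'-m)+2+2t$ in the square sum depends on the gap and on the number $t$ of distinct intermediate odd parts. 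It is compensated only by the corresponding change in the alternating sums.

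That computation is the actual content of the paper's proof. The paper writes $\ul{p}$ in terms of its odd (resp.\ even) parts of odd multiplicity and reads off the collapse of $\ul{p}^-$ (resp.\ $\ul{p}^+$) from \cite[Lemma 6.3.8]{CM93}. It treats the modified smallest (resp.\ largest) part separately, and evaluates both sides of \eqref{equ3:keylemma2} (resp.\ \eqref{equ3-2:keylemma2}) block by block. In type $D$ it uses instead that $(\ul{p}^{+-})_{\Sp_{2n}}$ is the union of $(\ul{p}_I^{+-})_{\Sp_{2n_1}}$ and $\ul{p}_J$.

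Your fallback via induction is a genuinely different, more conceptual route. However, it does not reduce both sides to the same Levi-level quantity when $\widehat{\mathrm{G}}_n$ is of type $B$ or $C$. Let $\ul{q}$ consist of one copy of each part of $\ul{p}$ of odd multiplicity. Then $\CO_{\ul{p}}$ is the saturation of (regular)$\times\CO_{\ul{q}}$ from a Levi $\prod\GL_b\times\widehat{\mathrm{G}}_0$. The $\GL$-blocks do contribute $\sum b^2$ to each side and split along $I$ and $J$, and $\ul{q}$ lies entirely in one of the two blocks.

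What remains, though, is a comparison of two different quantities. One is the codimension of $\eta(\ul{q})$ in $\frak{g}_0$, a Lie algebra of the \emph{other} type. The other is the codimension of the Lusztig--Spaltenstein dual $(\ul{q}^t)_{\widehat{\mathrm{G}}_0}$ in $\hat{\frak{g}}_0$, of the \emph{same} type. You would also need that $(\cdot)^t$ followed by $\widehat{\mathrm{G}}_i'$-collapse, not only $\eta$, is compatible with induction.

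Since $\dim\frak{g}_0=\dim\hat{\frak{g}}_0$, the equality of these two codimensions is Proposition \ref{prop identity} for distinguished $\ul{q}$. It is true (for instance via the Springer correspondence), but it is a genuine input missing from your sketch. Only in type $D$, where $\eta$ is itself $(\cdot)^t$ followed by $\SO_{2n}$-collapse, does the fallback go through as you state it.
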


\begin{proof}
Given any partition $\ul{p}=[p_1 \cdots p_r]$ with $p_1 \geq \cdots \geq p_r$, recall that $\ul{p}^+=[(p_1+1) \cdots p_r]$, $\ul{p}^-=[p_1 \cdots (p_r-1)]$.
By the proof of \cite[Theorem 6.3.11]{CM93},
given a partition $\ul{p}$ of type $\frak{g}_n=\frak{sp}_{2n}, \frak{so}_{2n+1}$, 
$(\ul{p}^t)_{G_n} = (\ul{p}^{G_n})^t.$
By \cite[Lemma 3.3]{Ach03}, given a partition $\ul{p}$ of $2n$, 
if it is an orthogonal partition or its transpose is a symplectic partition, then 
$(\ul{p}^t)_{\SO_{2n}}=((\ul{p}^{+-})_{\Sp_{2n}})^t$. 
Also recall that $I=\{i|b_i \text{ odd}\}$, $J=\{i|b_i \text{ even}\}$.
Hence, combining the recipe in \cite[Lemmas 6.3.8, 6.3.9]{CM93}, when $\mathrm{G}_n=\Sp_{2n}$,
\begin{align*}
\eta_{\frak{so}_{2n+1}, \frak{sp}_{2n}}([b_1^{a_1} \cdots b_r^{a_r}])&=(([b_1^{a_1} \cdots b_r^{a_r}]^-)_{\Sp_{2n}})^t,\\
    ([\prod_{i\in I}b_i^{a_i}]^t)_{\SO_{2n_1+1}} & = ([\prod_{i\in I}b_i^{a_i}]^{\SO_{2n_1+1}})^t=[\prod_{i\in I}b_i^{a_i}]^t,\\
    ([\prod_{j\in J}b_j^{a_j}]^t)_{\SO_{2n_2}} & = (([\prod_{j\in J}b_j^{a_j}]^{+-})_{\Sp_{2n_2}})^t=[\prod_{j\in J}b_j^{a_j}]^t;
\end{align*}
when $\mathrm{G}_n=\SO_{2n+1}$,
\begin{align*}
\eta_{\frak{sp}_{2n}, \frak{so}_{2n+1}}([b_1^{a_1} \cdots b_r^{a_r}])&=(([b_1^{a_1} \cdots b_r^{a_r}]^+)_{\SO_{2n+1}})^t,\\
    ([\prod_{i\in I}b_i^{a_i}]^t)_{\Sp_{2n_1}} & = ([\prod_{i\in I}b_i^{a_i}]^{\Sp_{2n_1}})^t=[\prod_{i\in I}b_i^{a_i}]^t,\\
    ([\prod_{j\in J}b_j^{a_j}]^t)_{\Sp_{2n_2}} & = (([\prod_{j\in J}b_j^{a_j}]^{\Sp_{2n_2}})^t=[\prod_{j\in J}b_j^{a_j}]^t;
\end{align*}
when $\mathrm{G}_n=\SO_{2n}^{\alpha}$,
\begin{align*}
\eta_{\frak{o}_{2n}, \frak{o}_{2n}}([b_1^{a_1} \cdots b_r^{a_r}])&=([b_1^{a_1} \cdots b_r^{a_r}]^t)_{\SO_{2n}}=(([b_1^{a_1} \cdots b_r^{a_r}]^{+-})_{\Sp_{2n}})^t,\\
    ([\prod_{i\in I}b_i^{a_i}]^t)_{\SO_{2n_1}} & = (([\prod_{i\in I}b_i^{a_i}]^{+-})_{\Sp_{2n_1}})^t,\\
    ([\prod_{j\in J}b_j^{a_j}]^t)_{\SO_{2n_2}} & = (([\prod_{j\in J}b_j^{a_j}]^{+-})_{\Sp_{2n_2}})^t=[\prod_{j\in J}b_j^{a_j}]^t.
\end{align*}

Given a partition $\ul{p}=[p_1 p_2 \cdots p_r]$ of $m$, with $p_1 \geq p_2 \geq \cdots \geq p_r$, 
for $r=1,2,\ldots,m$, let $r_i=|\{j|p_j=i\}|$ and $s_i=|\{j|p_j\geq i\}|$. 
Note that if writing $\ul{p}^t=[q_1 q_2 \cdots q_l]$ with $q_1 \geq q_2 \geq \cdots \geq q_l$, then for $i=1, \ldots, l$, $s_i=q_i$, for $i=l+1, \ldots, m$, $s_i=0$. Also note that $r_i = s_i - s_{i+1}$ with the convention that $s_{m+1}=0$. 
By \cite[Corollary 6.1.4]{CM93}, if $\ul{p}$ is a symplectic partition of $m=2k$, then 
$$\dim(\mathcal{O}_{\ul{p}})=2k^2+k-\frac{1}{2} \sum_i s_i^2-\frac{1}{2} \sum_{i \text{ odd }} r_i;$$
if $\ul{p}$ is an orthogonal partition of $m=2k+1$, then 
$$\dim(\mathcal{O}_{\ul{p}})=2k^2+k-\frac{1}{2} \sum_i s_i^2+\frac{1}{2} \sum_{i \text{ odd }} r_i;$$
and if $\ul{p}$ is an orthogonal partition of $m=2k$, then 
$$\dim(\mathcal{O}_{\ul{p}})=2k^2-k-\frac{1}{2} \sum_i s_i^2+\frac{1}{2} \sum_{i \text{ odd }} r_i.$$

From now on, we prove the lemma separately for each case of $\mathrm{G}_n$. 

\textbf{Case $\mathrm{G}_n=\Sp_{2n}$.} 
Write 
\begin{align*}
    \ul{p}_1 =&\, (([b_1^{a_1} \cdots b_r^{a_r}]^-)_{\Sp_{2n}})^t,\\
    \ul{p}_2=&\,[\prod_{i\in I}b_i^{a_i}]^t,\\
    \ul{p}_3=&\,[\prod_{j\in J}b_j^{a_j}]^t,
\end{align*}
then,
\begin{align*}
    \ul{p}_1^t =&\, ([b_1^{a_1} \cdots b_r^{a_r}]^-)_{\Sp_{2n}},\\
    \ul{p}_2^t=&\,[\prod_{i\in I}b_i^{a_i}],\\
    \ul{p}_3^t=&\,([\prod_{j\in J}b_j^{a_j}].
\end{align*}
Then the left hand side of \eqref{equ1:keylemma2} becomes
\begin{align*}
    &\frac{2n(2n+1)}{2}-n-\left(2n^2+n-\frac{1}{2} \sum_i \left(s_i^{\ul{p}_1}\right)^2-\frac{1}{2} \sum_{i \text{ odd }} r_i^{\ul{p}_1}\right)\\
    =&\, -n + \frac{1}{2} \sum_i \left(s_i^{\ul{p}_1}\right)^2+\frac{1}{2} \sum_{i \text{ odd }} r_i^{\ul{p}_1}.
\end{align*}
The right hand side of \eqref{equ1:keylemma2} becomes
\begin{align*}
    &\,\frac{2n_1(2n_1+1)}{2}-n_1
    +\frac{2n_2(2n_2-1)}{2}-n_2\\
    &\,-\left(2n_1^2+n_1-\frac{1}{2} \sum_i \left(s_i^{\ul{p}_2}\right)^2+\frac{1}{2} \sum_{i \text{ odd }} r_i^{\ul{p}_2}\right)\\
    &\,-\left(2n_2^2-n_2-\frac{1}{2} \sum_i \left(s_i^{\ul{p}_3}\right)^2+\frac{1}{2} \sum_{i \text{ odd }} r_i^{\ul{p}_3}\right)\\
    =&\, -n_1-n_2 + \frac{1}{2} \sum_i \left(s_i^{\ul{p}_2}\right)^2-\frac{1}{2} \sum_{i \text{ odd }} r_i^{\ul{p}_2}+ \frac{1}{2} \sum_i \left(s_i^{\ul{p}_3}\right)^2-\frac{1}{2} \sum_{i \text{ odd }} r_i^{\ul{p}_3}.
\end{align*}
Hence, to show \eqref{equ1:keylemma2}, it suffices to show that 
\begin{align}\label{equ2:keylemma2}
    \begin{split}
    &\, \sum_i \left(s_i^{\ul{p}_2}\right)^2+  \sum_i \left(s_i^{\ul{p}_3}\right)^2- \sum_i \left(s_i^{\ul{p}_1}\right)^2\\
        =&\,\sum_{i \text{ odd }} r_i^{\ul{p}_1}+\sum_{i \text{ odd }} r_i^{\ul{p}_2}+ \sum_{i \text{ odd }} r_i^{\ul{p}_3},
    \end{split}
\end{align}
i.e.,
\begin{align}\label{equ3:keylemma2}
    \begin{split}
        &\,\sum_i \left(s_i^{\ul{p}_2}\right)^2+  \sum_i \left(s_i^{\ul{p}_3}\right)^2- \sum_i \left(s_i^{\ul{p}_1}\right)^2\\
        =&\, \sum_{i \text{ odd }} \left(s_i^{\ul{p}_1}-s_{i+1}^{\ul{p}_1}\right)+\sum_{i \text{ odd }} \left(s_i^{\ul{p}_2}-s_{i+1}^{\ul{p}_2}\right)+ \sum_{i \text{ odd }} \left(s_i^{\ul{p}_3}-s_{i+1}^{\ul{p}_3}\right).
    \end{split}
\end{align}

Since $[b_1^{a_1} \cdots b_r^{a_r}]$ is an orthogonal partition of $2n+1$, we may write it as
\MATHBLOCKDOLLARDOLLAR[[b_1^{a_1} \cdots b_r^{a_r}]]{=\left[\left(\prod_{k=1}^{r_0} n_k^{2e_k}\right) \left(\prod_{i=1}^{2s+1} (2m_i+1)^{f_i}p_{i,1}^{2g_{i,1}}\cdots p_{i,r_i}^{2g_{i,r_i}}\right)\right],}
where $f_i$ is odd, $2m_i+1 > p_{i,1} > \cdots >p_{i,r_i}>2m_{i-1}+1$, for $i=1, \ldots, 2s+1$, with the convention of $m_0=-\frac{1}{2}$, and for $k=1, \ldots, r_0$, $n_k > 2m_{2s+1}+1$. 
To proceed, we consider two cases: Case (1),
$r_1\neq0$; Case (2), $r_1=0$. 

For Case (1), $r_1 \neq 0$, we have 
\begin{align}\label{equ3:keylemma}
\begin{split}
& ([b_1^{a_1} \cdots b_r^{a_r}]^-)_{\Sp_{2n}}\\
=  \ & \Biggl[\left(\prod_{k=1}^{r_0} n_k^{2e_k}\right)\left(\prod_{i=2}^{2s+1} (2m_i+1)^{f_i}p_{i,1}^{2g_{i,1}}\cdots p_{i,r_i}^{2g_{i,r_i}}\right)\\
& \cdot (2m_1+1)^{f_1}p_{1,1}^{2g_{1,1}}\cdots
p_{1,r_1-1}^{2g_{1,r_1-1}} p_{1,r_1}^{2g_{1,r_1}-1}(p_{1,r_1}-1)\Biggr]_{\Sp_{2n}}.
\end{split}
\end{align}
For $2 \leq i \leq 2s+1$, assume that all the odd parts in $\{p_{i,1}, \ldots, p_{i,r_i}\}$ are
$\{(2q_{i,1}+1), \ldots, (2q_{i,t_i}+1)\}$, with
$2q_{i,1}+1 > \cdots > 2q_{i,t_i}+1$.
And assume that all the odd parts in $\{p_{1,1}, \ldots, p_{1,r_1-1}\}$ are
$\{(2q_{1,1}+1), \ldots, (2q_{1,t_1}+1)\}$, with
$2q_{1,1}+1 > \cdots > 2q_{1,t_1}+1$.
For $1 \leq i \leq 2s+1$, and $1 \leq j \leq t_i$, we assume that the exponent of $2q_{i,j}+1$ is $h_{i,j}$.
Then by the recipe \cite[Lemma 6.3.8]{CM93}, to get the $\Sp_{2n}$-collapse on the right hand side of \eqref{equ3:keylemma}, we just have to do the following:
\begin{itemize}
\item for $1 \leq i \leq s$,
replace $(2m_{2i+1}+1)^{f_{2i+1}}(2m_{2i}+1)^{f_{2i}}$
by $(2m_{2i+1}+1)^{f_{2i+1}-1}(2m_{2i+1})(2m_{2i}+2)(2m_{2i}+1)^{f_{2i}-1}$,
and for $1 \leq j \leq t_{2i+1}$, replace
$(2q_{{2i+1},j}+1)^{h_{{2i+1},j}}$ by
$$(2q_{{2i+1},j}+2))(2q_{{2i+1},j}+1)^{h_{{2i+1},j}-2}(2q_{{2i+1},j});$$
\item replace $(2m_1+1)^{f_1}$ by $(2m_1+1)^{f_1-1}(2m_1)$, for $1 \leq j \leq t_{1}$, replace
$(2q_{{1},j}+1)^{h_{1,j}}$ by
$$(2q_{{1},j}+2))(2q_{{1},j}+1)^{h_{{1},j}-2}(2q_{{1},j}),$$
and
replace $p_{1,r_1}^{2g_{1,r_1}-1}$
by $(p_{1,r_1}+1)p_{1,r_1}^{2g_{1,r_1}-2}$ if $p_{1,r_1}$ is odd, replace $(p_{1,r_1}-1)$
by $p_{1,r_1}$ if $p_{1,r_1}$ is even.
\end{itemize}

Then the left hand side of \eqref{equ3:keylemma2} becomes
\begin{align*}
\begin{split}
& 
\sum_i \left(s_i^{\ul{p}_2}\right)^2+  \sum_i \left(s_i^{\ul{p}_3}\right)^2- \sum_i \left(s_i^{\ul{p}_1}\right)^2\\
=  \ & 
\begin{cases}
\sum_{i=1}^s (4m_{2i+1}-4m_{2i}-2 -2t_{2i+1}) + (4m_1-1)-2t_1, \text{ if }p_{1,r_1} \text{ is odd},\\
\sum_{i=1}^s (4m_{2i+1}-4m_{2i}-2 -2t_{2i+1}) + (4m_1+1)-2t_1, \text{ if }p_{1,r_1} \text{ is even}.\\
\end{cases}
\end{split}
\end{align*}
And the right hand side of \eqref{equ3:keylemma2} becomes
\begin{align*}
\begin{split}
& \sum_{i \text{ odd }} \left(s_i^{\ul{p}_1}-s_{i+1}^{\ul{p}_1}\right)+\sum_{i \text{ odd }} \left(s_i^{\ul{p}_2}-s_{i+1}^{\ul{p}_2}\right)+ \sum_{i \text{ odd }} \left(s_i^{\ul{p}_3}-s_{i+1}^{\ul{p}_3}\right)\\
=  \ & \sum_{i=1}^s (2m_{2i+1}-2m_{2i}-2-2t_{2i+1}) + (2m_1-2)-2t_1\\
\ & + \sum_{i=1}^s (2m_{2i+1}-2m_{2i}) + (2m_1+1)\\
&\, + 0\\
= \ & \sum_{i=1}^s (4m_{2i+1}-4m_{2i}-2 -2t_{2i+1}) + (4m_1-1)-2t_1, \text{ if }p_{1,r_1} \text{ is odd};
\end{split}
\end{align*}
and becomes
\begin{align*}
\begin{split}
& \sum_{i \text{ odd }} \left(s_i^{\ul{p}_1}-s_{i+1}^{\ul{p}_1}\right)+\sum_{i \text{ odd }} \left(s_i^{\ul{p}_2}-s_{i+1}^{\ul{p}_2}\right)+ \sum_{i \text{ odd }} \left(s_i^{\ul{p}_3}-s_{i+1}^{\ul{p}_3}\right)\\
=  \ & \sum_{i=1}^s (2m_{2i+1}-2m_{2i}-2-2t_{2i+1}) + (2m_1)-2t_1\\
\ & + \sum_{i=1}^s (2m_{2i+1}-2m_{2i}) + (2m_1+1)\\
\ & + 0\\
= \ & \sum_{i=1}^s (4m_{2i+1}-4m_{2i}-2 -2t_{2i+1}) + (4m_1+1)-2t_1, \text{ if }p_{1,r_1} \text{ is even}.
\end{split}
\end{align*}
Hence, we have proved \eqref{equ3:keylemma2}.

For Case (2), $r_1=0$, we have
\begin{align}\label{equ5:keylemma}
\begin{split}
& ([b_1^{a_1} \cdots b_r^{a_r}]^-)_{\Sp_{2n}}\\
=  \ & \Biggl[\left(\prod_{k=1}^{r_0} n_k^{2e_k}\right)\left(\prod_{i=2}^{2s+1} (2m_i+1)^{f_i}p_{i,1}^{2g_{i,1}}\cdots p_{i,r_i}^{2g_{i,r_i}}\right)\\
& \cdot (2m_1+1)^{f_1-1}(2m_1)\Biggr]_{\Sp_{2n}}.
\end{split}
\end{align}
As in Cases (1) and (2),
for $2 \leq i \leq 2s+1$, assume that all the odd parts in $\{p_{i,1}, \ldots, p_{i,r_i}\}$ are
$\{(2q_{i,1}+1), \ldots, (2q_{i,t_i}+1)\}$, with
$2q_{i,1}+1 > \cdots > 2q_{i,t_i}+1$, and for $1 \leq j \leq t_i$, we assume that the exponent of $2q_{i,j}+1$ is $h_{i,j}$.
Then by the recipe in Lemma 6.3.8 of \cite{CM93} (see the beginning of Section 4), to get the $\Sp_{2n}$-collapse on the right hand side of \eqref{equ3:keylemma2}, we just have to do the following:
\begin{itemize}
\item for $1 \leq i \leq s$,
replace $(2m_{2i+1}+1)^{f_{2i+1}}(2m_{2i}+1)^{f_{2i}}$
by $(2m_{2i+1}+1)^{f_{2i+1}-1}(2m_{2i+1})(2m_{2i}+2)(2m_{2i}+1)^{f_{2i}-1}$,
then for $1 \leq j \leq t_{2i+1}$, replace
$(2q_{{2i+1},j}+1)^{h_{{2i+1},j}}$ by
$$(2q_{{2i+1},j}+2))(2q_{{2i+1},j}+1)^{h_{{2i+1},j}-2}(2q_{{2i+1},j}).$$
\end{itemize}

Then the left hand side of \eqref{equ3:keylemma2} becomes
\begin{align*}
\begin{split}
& \sum_i \left(s_i^{\ul{p}_2}\right)^2+  \sum_i \left(s_i^{\ul{p}_3}\right)^2- \sum_i \left(s_i^{\ul{p}_1}\right)^2\\
=  \ & \sum_{i=1}^s (4m_{2i+1}-4m_{2i}-2 -2t_{2i+1}) + (4m_1+1).
\end{split}
\end{align*}
And the right hand side of \eqref{equ3:keylemma2} becomes
\begin{align*}
\begin{split}
& \sum_{i \text{ odd }} \left(s_i^{\ul{p}_1}-s_{i+1}^{\ul{p}_1}\right)+\sum_{i \text{ odd }} \left(s_i^{\ul{p}_2}-s_{i+1}^{\ul{p}_2}\right)+ \sum_{i \text{ odd }} \left(s_i^{\ul{p}_3}-s_{i+1}^{\ul{p}_3}\right)\\
=  \ & \sum_{i=1}^s (2m_{2i+1}-2m_{2i}-2-2t_{2i+1}) + (2m_1)\\
\ & + \sum_{i=1}^s (2m_{2i+1}-2m_{2i}) + (2m_1+1)\\
\ & + 0\\
= \ & \sum_{i=1}^s (4m_{2i+1}-4m_{2i}-2 -2t_{2i+1}) + (4m_1+1).
\end{split}
\end{align*}
Hence, we also have proved \eqref{equ3:keylemma2}.

\textbf{Case $\mathrm{G}_n=\SO_{2n+1}$.} 
Write 
\begin{align*}
    \ul{p}_1 =&\, (([b_1^{a_1} \cdots b_r^{a_r}]^+)_{\SO_{2n+1}})^t,\\
    \ul{p}_2=&\,[\prod_{i\in I}b_i^{a_i}]^t,\\
    \ul{p}_3=&\,[\prod_{j\in J}b_j^{a_j}]^t,
\end{align*}
then,
\begin{align*}
    \ul{p}_1^t =&\, ([b_1^{a_1} \cdots b_r^{a_r}]^+)_{\SO_{2n+1}},\\
    \ul{p}_2^t=&\,[\prod_{i\in I}b_i^{a_i}],\\
    \ul{p}_3^t=&\,([\prod_{j\in J}b_j^{a_j}].
\end{align*}
Then the left hand side of \eqref{equ1:keylemma2} becomes
\begin{align*}
    &\frac{2n(2n+1)}{2}-n-\left(2n^2+n-\frac{1}{2} \sum_i \left(s_i^{\ul{p}_1}\right)^2+\frac{1}{2} \sum_{i \text{ odd }} r_i^{\ul{p}_1}\right)\\
    =&\, -n + \frac{1}{2} \sum_i \left(s_i^{\ul{p}_1}\right)^2-\frac{1}{2} \sum_{i \text{ odd }} r_i^{\ul{p}_1}.
\end{align*}
The right hand side of \eqref{equ1:keylemma2} becomes
\begin{align*}
    &\,\frac{2n_1(2n_1+1)}{2}-n_1
    +\frac{2n_2(2n_2+1)}{2}-n_2\\
    &\,-\left(2n_1^2+n_1-\frac{1}{2} \sum_i \left(s_i^{\ul{p}_2}\right)^2-\frac{1}{2} \sum_{i \text{ odd }} r_i^{\ul{p}_2}\right)\\
    &\,-\left(2n_2^2+n_2-\frac{1}{2} \sum_i \left(s_i^{\ul{p}_3}\right)^2-\frac{1}{2} \sum_{i \text{ odd }} r_i^{\ul{p}_3}\right)\\
    =&\, -n_1-n_2 + \frac{1}{2} \sum_i \left(s_i^{\ul{p}_2}\right)^2+\frac{1}{2} \sum_{i \text{ odd }} r_i^{\ul{p}_2}+ \frac{1}{2} \sum_i \left(s_i^{\ul{p}_3}\right)^2+\frac{1}{2} \sum_{i \text{ odd }} r_i^{\ul{p}_3}.
\end{align*}
Hence, to show \eqref{equ1:keylemma2}, it suffices to show that 
\begin{align}\label{equ2-2:keylemma2}
    \begin{split}
    &\, \sum_i \left(s_i^{\ul{p}_1}\right)^2-\sum_i \left(s_i^{\ul{p}_2}\right)^2-  \sum_i \left(s_i^{\ul{p}_3}\right)^2\\
        =&\,\sum_{i \text{ odd }} r_i^{\ul{p}_1}+\sum_{i \text{ odd }} r_i^{\ul{p}_2}+ \sum_{i \text{ odd }} r_i^{\ul{p}_3},
    \end{split}
\end{align}
i.e.,
\begin{align}\label{equ3-2:keylemma2}
    \begin{split}
        &\,\sum_i \left(s_i^{\ul{p}_1}\right)^2-\sum_i \left(s_i^{\ul{p}_2}\right)^2-  \sum_i \left(s_i^{\ul{p}_3}\right)^2\\
        =&\, \sum_{i \text{ odd }} \left(s_i^{\ul{p}_1}-s_{i+1}^{\ul{p}_1}\right)+\sum_{i \text{ odd }} \left(s_i^{\ul{p}_2}-s_{i+1}^{\ul{p}_2}\right)+ \sum_{i \text{ odd }} \left(s_i^{\ul{p}_3}-s_{i+1}^{\ul{p}_3}\right).
    \end{split}
\end{align}

Since $[b_1^{a_1} \cdots b_r^{a_r}]$ is a symplectic partition of $2n$, we may write it as
\MATHBLOCKDOLLARDOLLAR[[b_1^{a_1} \cdots b_r^{a_r}]]{=\left[\left(\prod_{k=1}^{r_0} n_k^{2e_k} \right)\left(\prod_{i=1}^{s} (2m_i)^{f_i}p_{i,1}^{2g_{i,1}}\cdots p_{i,r_i}^{2g_{i,r_i}}\right)\right],}
where $f_i$ is odd, $2m_i > p_{i,1} > \cdots >p_{i,r_i}>2m_{i-1}$, for $i=1, \ldots, s$, with the convention of $m_1 \neq 0$ and $m_0=0$, and $n_1> \cdots > n_k > 2m_{s}$. 
Similar to Case of $\mathrm{G}_n=\Sp_{2n}$, to proceed, we consider two cases: Case (1),
$r_0\neq0$; Case (2), $r_0=0$. 

For Case (1), $r_0 \neq 0$, we have 
\begin{align}\label{equ3-2:keylemma}
\begin{split}
& ([b_1^{a_1} \cdots b_r^{a_r}]^+)_{\SO_{2n+1}}\\
=  \ & \left[(n_1+1)n_1^{2e_1-1}\left(\prod_{k=2}^{r_0} n_k^{2e_k}\right) \left(\prod_{i=1}^{s} (2m_i)^{f_i}p_{i,1}^{2g_{i,1}}\cdots p_{i,r_i}^{2g_{i,r_i}}\right)\right]_{\SO_{2n+1}}.
\end{split}
\end{align}

Assume that all the even parts in $\{n_2, n_3, \cdots, n_{r_0}\}$ are 
$\{2q_{0,1}, \ldots, 2q_{0,t_0}\}$, with
$2q_{0,1} > \cdots > 2q_{0,t_0}$.
For $1 \leq i \leq [\frac{s}{2}]$, assume that all the even parts in $\{p_{s-(2i-1),1}, \ldots, p_{s-(2i-1),r_{2i-1}}\}$ are
$\{2q_{i,1}, \ldots, 2q_{i,t_i}\}$, with
$2q_{i,1} > \cdots > 2q_{i,t_i}$.
For $0 \leq i \leq [\frac{s}{2}]$, and $1 \leq j \leq t_i$, we assume that the exponent of $2q_{i,j}$ is $h_{i,j}$.
Then by the recipe \cite[Lemma 6.3.8]{CM93}, to get the $\SO_{2n+1}$-collapse on the right hand side of \eqref{equ3-2:keylemma}, we just have to do the following:
\begin{itemize}
\item replace $(n_1+1)(2m_s)$ by $(n_1)(2m_s+1)$ if $n_1$ is odd, replace $(n_1)(2m_s)$ by $(n_1-1)(2m_s+1)$ if $n_1$ is even, and for  $1 \leq j \leq t_{0}$, replace
$(2q_{0,j})^{h_{0,j}}$ by
$(2q_{0,j}+1)(2q_{0,j})^{h_{0,j}-2}(2q_{0,j}-1)$;
\item for $1 \leq i \leq [\frac{s}{2}]$,
replace $(2m_{s-(2i-1)})^{f_{s-(2i-1)}}(2m_{s-(2i)})^{f_{s-(2i)}}$
by $(2m_{s-(2i-1)})^{f_{s-(2i-1)}-1}(2m_{s-(2i-1)}-1)(2m_{s-(2i)}+1)(2m_{s-(2i)})^{f_{s-(2i)}-1}$,
and for $1 \leq j \leq t_{i}$, replace
$(2q_{i,j})^{h_{i,j}}$ by
$(2q_{i,j}+1)(2q_{i,j})^{h_{i,j}-2}(2q_{i,j}-1)$.
\end{itemize}

Then the left hand side of \eqref{equ3-2:keylemma2} becomes
\begin{align*}
\begin{split}
& 
\sum_i \left(s_i^{\ul{p}_1}\right)^2-  \sum_i \left(s_i^{\ul{p}_2}\right)^2- \sum_i \left(s_i^{\ul{p}_3}\right)^2\\
=  \ & 
\begin{cases}
\sum_{i=1}^{[\frac{s}{2}]} (4m_{s-(2i)}-4m_{s-(2i-1)}+2 -2t_{i}) + (4m_s+1-2t_0), \text{ if } n_1 \text{ is odd},\\
\sum_{i=1}^{[\frac{s}{2}]} (4m_{s-(2i)}-4m_{s-(2i-1)}+2 -2t_{i}) + (4m_s+3-2t_0), \text{ if } n_1 \text{ is even}.
\end{cases}
\end{split}
\end{align*}
And the right hand side of \eqref{equ3-2:keylemma2} becomes
\begin{align*}
\begin{split}
& \sum_{i \text{ odd }} \left(s_i^{\ul{p}_1}-s_{i+1}^{\ul{p}_1}\right)+\sum_{i \text{ odd }} \left(s_i^{\ul{p}_2}-s_{i+1}^{\ul{p}_2}\right)+ \sum_{i \text{ odd }} \left(s_i^{\ul{p}_3}-s_{i+1}^{\ul{p}_3}\right)\\
=  \ & \sum_{i=1}^{[\frac{s}{2}]} (2m_{s-(2i)}-2m_{s-(2i-1)}+2-2t_{i}) + (2m_s+1-2t_0)\\
\ & +0\\
\ & + \sum_{i=1}^{[\frac{s}{2}]} (2m_{s-(2i)}-2m_{s-(2i-1)}) + (2m_s)\\
= \ & \sum_{i=1}^{[\frac{s}{2}]} (4m_{s-(2i)}-4m_{s-(2i-1)}+2 -2t_{i}) + (4m_s+1-2t_0),
\end{split}
\end{align*}
if $n_1$ is odd, and becomes
\begin{align*}
\begin{split}
& \sum_{i \text{ odd }} \left(s_i^{\ul{p}_1}-s_{i+1}^{\ul{p}_1}\right)+\sum_{i \text{ odd }} \left(s_i^{\ul{p}_2}-s_{i+1}^{\ul{p}_2}\right)+ \sum_{i \text{ odd }} \left(s_i^{\ul{p}_3}-s_{i+1}^{\ul{p}_3}\right)\\
=  \ & \sum_{i=1}^{[\frac{s}{2}]} (2m_{s-(2i)}-2m_{s-(2i-1)}+2-2t_{i}) + (2m_s+3-2t_0)\\
\ & +0\\
\ & + \sum_{i=1}^{[\frac{s}{2}]} (2m_{s-(2i)}-2m_{s-(2i-1)}) + (2m_s)\\
= \ & \sum_{i=1}^{[\frac{s}{2}]} (4m_{s-(2i)}-4m_{s-(2i-1)}+2 -2t_{i}) + (4m_s+3-2t_0),
\end{split}
\end{align*}
if $n_1$ is even. 
Hence, we have proved \eqref{equ3-2:keylemma2}.

For Case (2), $r_0=0$, we have 
\begin{align}\label{equ3-2-2:keylemma}
\begin{split}
& ([b_1^{a_1} \cdots b_r^{a_r}]^+)_{\SO_{2n+1}}\\
=  \ & \left[(2m_s+1)(2m_s)^{f_s-1}\left(\prod_{i=1}^{s-1} (2m_i)^{f_i}p_{i,1}^{2g_{i,1}}\cdots p_{i,r_i}^{2g_{i,r_i}}\right)\right]_{\SO_{2n+1}}.
\end{split}
\end{align}

For $1 \leq i \leq [\frac{s}{2}]$, assume that all the even parts in 
$$\{p_{s-(2i-1),1}, \ldots, p_{s-(2i-1),r_{2i-1}}\}$$ are
$\{2q_{i,1}, \ldots, 2q_{i,t_i}\}$, with
$2q_{i,1} > \cdots > 2q_{i,t_i}$.
For $1 \leq i \leq [\frac{s}{2}]$\, and $1 \leq j \leq t_i$, we assume that the exponent of $2q_{i,j}$ is $h_{i,j}$.
Then by the recipe \cite[Lemma 6.3.8]{CM93}, to get the $\SO_{2n+1}$-collapse on the right hand side of \eqref{equ3-2-2:keylemma}, we just have to do the following:
\begin{itemize}
\item for $1 \leq i \leq [\frac{s}{2}]$,
replace $(2m_{s-(2i-1)})^{f_{s-(2i-1)}}(2m_{s-(2i)})^{f_{s-(2i)}}$
by $(2m_{s-(2i-1)})^{f_{s-(2i-1)}-1}(2m_{s-(2i-1)}-1)(2m_{s-(2i)}+1)(2m_{s-(2i)})^{f_{s-(2i)}-1}$,
and for $1 \leq j \leq t_{i}$, replace
$(2q_{i,j})^{h_{i,j}}$ by
$(2q_{i,j}+1)(2q_{i,j})^{h_{i,j}-2}(2q_{i,j}-1)$.
\end{itemize}

Then the left hand side of \eqref{equ3-2:keylemma2} becomes
\begin{align*}
\begin{split}
& 
\sum_i \left(s_i^{\ul{p}_1}\right)^2-  \sum_i \left(s_i^{\ul{p}_2}\right)^2- \sum_i \left(s_i^{\ul{p}_3}\right)^2\\
=  \ & \sum_{i=1}^{[\frac{s}{2}]} (4m_{s-(2i)}-4m_{s-(2i-1)}+2 -2t_{i}) + (4m_s+1).
\end{split}
\end{align*}
And the right hand side of \eqref{equ3-2:keylemma2} becomes
\begin{align*}
\begin{split}
& \sum_{i \text{ odd }} \left(s_i^{\ul{p}_1}-s_{i+1}^{\ul{p}_1}\right)+\sum_{i \text{ odd }} \left(s_i^{\ul{p}_2}-s_{i+1}^{\ul{p}_2}\right)+ \sum_{i \text{ odd }} \left(s_i^{\ul{p}_3}-s_{i+1}^{\ul{p}_3}\right)\\
=  \ & \sum_{i=1}^{[\frac{s}{2}]} (2m_{s-(2i)}-2m_{s-(2i-1)}+2-2t_{i}) + (2m_s+1)\\
\ & +0\\
\ & + \sum_{i=1}^{[\frac{s}{2}]} (2m_{s-(2i)}-2m_{s-(2i-1)}) + (2m_s)\\
= \ & \sum_{i=1}^{[\frac{s}{2}]} (4m_{s-(2i)}-4m_{s-(2i-1)}+2 -2t_{i}) + (4m_s+1).
\end{split}
\end{align*}
Hence, we have proved \eqref{equ3-2:keylemma2}.

\textbf{Case $\mathrm{G}_n=\SO_{2n}^{\alpha}$.} 
Write 
\begin{align*}
    \ul{p}_1 =&\, (([b_1^{a_1} \cdots b_r^{a_r}]^{+-})_{\Sp_{2n}})^t,\\
    \ul{p}_2=&\,(([\prod_{i\in I}b_i^{a_i}]^{+-})_{\Sp_{2n_1}})^t,\\
    \ul{p}_3=&\,[\prod_{j\in J}b_j^{a_j}]^t,
\end{align*}
then,
\begin{align*}
    \ul{p}_1^t =&\, ([b_1^{a_1} \cdots b_r^{a_r}]^{+-})_{\Sp_{2n}},\\
    \ul{p}_2^t=&\,([\prod_{i\in I}b_i^{a_i}]^{+-})_{\Sp_{2n_1}},\\
    \ul{p}_3^t=&\,([\prod_{j\in J}b_j^{a_j}].
\end{align*}
Then the left hand side of \eqref{equ1:keylemma2} becomes
\begin{align*}
    &\frac{2n(2n-1)}{2}-n-\left(2n^2-n-\frac{1}{2} \sum_i \left(s_i^{\ul{p}_1}\right)^2+\frac{1}{2} \sum_{i \text{ odd }} r_i^{\ul{p}_1}\right)\\
    =&\, -n + \frac{1}{2} \sum_i \left(s_i^{\ul{p}_1}\right)^2-\frac{1}{2} \sum_{i \text{ odd }} r_i^{\ul{p}_1}.
\end{align*}
The right hand side of \eqref{equ1:keylemma2} becomes
\begin{align*}
    &\,\frac{2n_1(2n_1-1)}{2}-n_1
    +\frac{2n_2(2n_2-1)}{2}-n_2\\
    &\,-\left(2n_1^2-n_1-\frac{1}{2} \sum_i \left(s_i^{\ul{p}_2}\right)^2+\frac{1}{2} \sum_{i \text{ odd }} r_i^{\ul{p}_2}\right)\\
    &\,-\left(2n_2^2-n_2-\frac{1}{2} \sum_i \left(s_i^{\ul{p}_3}\right)^2+\frac{1}{2} \sum_{i \text{ odd }} r_i^{\ul{p}_3}\right)\\
    =&\, -n_1-n_2 + \frac{1}{2} \sum_i \left(s_i^{\ul{p}_2}\right)^2-\frac{1}{2} \sum_{i \text{ odd }} r_i^{\ul{p}_2}+ \frac{1}{2} \sum_i \left(s_i^{\ul{p}_3}\right)^2-\frac{1}{2} \sum_{i \text{ odd }} r_i^{\ul{p}_3}.
\end{align*}
Hence, to show \eqref{equ1:keylemma2}, it suffices to show that 
\begin{align}\label{equ2-3:keylemma2}
    \begin{split}
        &\, \sum_i \left(s_i^{\ul{p}_2}\right)^2+  \sum_i \left(s_i^{\ul{p}_3}\right)^2- \sum_i \left(s_i^{\ul{p}_1}\right)^2\\
        =        &\,\sum_{i \text{ odd }} r_i^{\ul{p}_2}+ \sum_{i \text{ odd }} r_i^{\ul{p}_3}-\sum_{i \text{ odd }} r_i^{\ul{p}_1},
    \end{split}
\end{align}
i.e.,
\begin{align}\label{equ3-3:keylemma2}
    \begin{split}
        &\,\sum_i \left(s_i^{\ul{p}_2}\right)^2+  \sum_i \left(s_i^{\ul{p}_3}\right)^2- \sum_i \left(s_i^{\ul{p}_1}\right)^2\\
        =&\, \sum_{i \text{ odd }} \left(s_i^{\ul{p}_2}-s_{i+1}^{\ul{p}_2}\right)+ \sum_{i \text{ odd }} \left(s_i^{\ul{p}_3}-s_{i+1}^{\ul{p}_3}\right)-\sum_{i \text{ odd }} \left(s_i^{\ul{p}_1}-s_{i+1}^{\ul{p}_1}\right).
    \end{split}
\end{align}

Since $I=\{i|b_i \text{ odd}\}$, $J=\{i|b_i \text{ even}\}$, following the recipe in \cite[Lemma 6.3.8]{CM93} on carrying out the $\Sp$-collapse, 
an easy observation is that $\ul{p}_1^t$ is exactly union of $\ul{p}_2^t$ and $\ul{p}_3^t$. On the other hand, as an orthogonal partition, even parts always have even multiplicities. Hence, we obtain that 
$$\sum_i \left(s_i^{\ul{p}_2}\right)^2+  \sum_i \left(s_i^{\ul{p}_3}\right)^2= \sum_i \left(s_i^{\ul{p}_1}\right)^2,$$ 
$$\sum_{i \text{ odd }} \left(s_i^{\ul{p}_3}-s_{i+1}^{\ul{p}_3}\right)=0,$$ and 
$$\sum_{i \text{ odd }} \left(s_i^{\ul{p}_2}-s_{i+1}^{\ul{p}_2}\right)=\sum_{i \text{ odd }} \left(s_i^{\ul{p}_1}-s_{i+1}^{\ul{p}_1}\right).$$
Therefore, both sides of \eqref{equ3-3:keylemma2} are identically zero, and \eqref{equ1:keylemma2} is also true for the case of $\mathrm{G}_n=\SO_{2n}^{\alpha}$. 

This completes the proof of Lemma \ref{keylemma2}. 
\end{proof}

We also have the following dimension identity for nilpotent orbits which has its own interest and is expected to play an important role in studying the general case of Conjecture \ref{cubmfclocal}. Note that this identity is the same as the one in Lemma \ref{keylemma2} when $b_i$ are of the same parity. 

\begin{prop}\label{prop identity}
We have the following dimension identity:
\begin{equation}\label{equ:keylemma}
    \dim_{\frak{g}_{n}}(\eta_{\hat{\frak{g}}_n, \frak{g}_{n}}([b_1^{a_1} \cdots b_r^{a_r}])) = \dim_{\hat{\frak{g}}_n}(([b_1^{a_1} \cdots b_r^{a_r}]^t)_{\widehat{\mathrm{G}}_n}),
\end{equation}
where $([b_1^{a_1} \cdots b_r^{a_r}]^t)_{\widehat{\mathrm{G}}_n}$ is the $\widehat{\mathrm{G}}_n$-collapse of the partition $[b_1^{a_1} \cdots b_r^{a_r}]^t$.
\end{prop}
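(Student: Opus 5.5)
Since both sides of \eqref{equ:keylemma} live in Lie algebras of the same dimension ($\dim \frak{g}_n=\dim\hat{\frak{g}}_n$ in all three cases: $n(2n+1)$ for $\Sp_{2n}/\SO_{2n+1}$ and $n(2n-1)$ for $\SO_{2n}$), the identity is equivalent to an equality of centralizer dimensions. We will follow the template of the proof of Lemma \ref{keylemma2}. Set $\ul{q}=[b_1^{a_1}\cdots b_r^{a_r}]$; this is a partition of $N$ of orthogonal type when $\mathrm{G}_n=\Sp_{2n}$ or $\SO_{2n}^\alpha$, and of symplectic type when $\mathrm{G}_n=\SO_{2n+1}$. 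We treat the three cases separately.

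For $\mathrm{G}_n=\SO_{2n}^{\alpha}$ there is nothing to prove. Here $\widehat{\mathrm{G}}_n=\SO_{2n}$ and, by Definition \ref{BV duality map}(iii), $\eta(\ul{q})=(\ul{q}^t)_{\SO_{2n}}$. Both sides of \eqref{equ:keylemma} are therefore the dimension of the same $\SO_{2n}$-orbit.

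For $\mathrm{G}_n=\Sp_{2n}$, the left side is $\dim \CO_{\ul{p}_1}$ with $\ul{p}_1=((\ul{q}^-)_{\Sp_{2n}})^t$. The right side is $\dim\CO_{\ul{p}_4}$ in $\frak{so}_{2n+1}$, where $\ul{p}_4=(\ul{q}^t)_{\SO_{2n+1}}$. By the proof of \cite[Theorem 6.3.11]{CM93}, $\ul{p}_4=(\ul{q}^{\SO_{2n+1}})^t$, which is $\ul{q}^t$ in the case where $\ul{q}$ is already special. We then insert the formula of \cite[Corollary 6.1.4]{CM93} for both sides, as in the proof of Lemma \ref{keylemma2}. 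The identity reduces to
$$\sum_i (s_i^{\ul{p}_4})^2-\sum_i (s_i^{\ul{p}_1})^2=\sum_{i\text{ odd}} r_i^{\ul{p}_1}+\sum_{i\text{ odd}} r_i^{\ul{p}_4}.$$
The $s_i$ are the parts of the transposes, so this is a computation on $(\ul{q}^-)_{\Sp_{2n}}$ versus $\ul{q}^{\SO_{2n+1}}$.

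To evaluate it we reuse the decomposition of the orthogonal partition $\ul{q}$ of $2n+1$ from the proof of Lemma \ref{keylemma2}: blocks $(2m_i+1)^{f_i}$ with $f_i$ odd, separated by parts of even multiplicity. We again split into Case (1), $r_1\ne 0$, and Case (2), $r_1=0$. In each case the explicit $\Sp$-collapse recipe of \cite[Lemma 6.3.8]{CM93} was already written down. The $\SO_{2n+1}$-expansion, read through \cite[Lemmas 6.3.8, 6.3.9]{CM93}, changes parts only in the intervals between consecutive odd parts of odd multiplicity; it does so via the same local moves $x^h\mapsto (x+1)x^{h-2}(x-1)$ or their inverses. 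The contribution of each interval to both sides is then a telescoping local quantity, computed exactly as in the displays of Lemma \ref{keylemma2}, and the two sides agree term by term.

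The case $\mathrm{G}_n=\SO_{2n+1}$ is dual. We compare $\ul{p}_1=((\ul{q}^+)_{\SO_{2n+1}})^t$ with $(\ul{q}^t)_{\Sp_{2n}}=(\ul{q}^{\Sp_{2n}})^t$, using the same decomposition and the same two cases $r_0\neq0$ and $r_0=0$.

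A more conceptual shortcut is worth trying first. By Lemma \ref{keylemma2}, in the equal-parity case the right side equals the dimension of a product of Richardson-type orbits in the endoscopic group $\widehat{\mathrm{G}}'_1\times\widehat{\mathrm{G}}'_2$. One could then hope that $\dim(\ul{q}^t)_{\widehat{\mathrm{G}}_n}$ is always given by that orbit induced to $\widehat{\mathrm{G}}_n$ (this is exactly what makes the two identities coincide when all $b_i$ have the same parity), and that $\eta$ preserves dimension of special pieces via Lusztig–Spaltenstein duality. Indeed $\eta(\ul{q})$ is the special dual of the special orbit $\ul{q}^{\widehat{\mathrm{G}}_n}$, and $(\ul{q}^t)_{\widehat{\mathrm{G}}_n}=(\ul{q}^{\widehat{\mathrm{G}}_n})^t$ has the same dimension as the transpose-dual when restricted to Richardson orbits. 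If we can cite the fact that the Barbasch–Vogan dual of the orbit $\ul{q}^{\widehat{\mathrm{G}}_n}$ and its transpose-collapse are both Richardson for the same parabolic (Levi of type $\GL_{a_1}\times\cdots$), equal dimensions follow at once. Otherwise we fall back on the case computation.

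The main obstacle is the mixed-parity bookkeeping in the combinatorial route. There the expansion $\ul{q}^{\widehat{\mathrm{G}}_n}$ and the collapse $(\ul{q}^{\mp})_{\mathrm{G}_n}$ alter different parts, so the correction terms $t_i$ (the numbers of odd or even parts of even multiplicity in each interval) must be shown to cancel between the two sides. We will handle this interval by interval, verifying that each local move changes $\sum s_i^2$ by $2$ and changes the count $\sum_{i\text{ odd}}r_i$ by the matching amount.
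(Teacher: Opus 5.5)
Your proposal is correct and follows the argument the paper intends; the paper omits the proof as ``similar to that of Lemma~\ref{keylemma2}''. The $\SO_{2n}^{\alpha}$ case is tautological, and in the other two cases \cite[Corollary 6.1.4]{CM93}, together with the collapse and expansion recipes, reduces everything to the combinatorics of Lemma~\ref{keylemma2}. Your closing observation is in fact decisive: each expansion move $x^{2g}\mapsto(x+1)x^{2g-2}(x-1)$ leaves the codimension expression of \cite[Corollary 6.1.4]{CM93} unchanged, including moves lying above the largest odd-multiplicity part (e.g.\ $[2^21^3]^{\SO_7}=[3\,1^4]$, which your interval description misses). Hence the right side equals that expression evaluated on $\ul{q}^t$, and this is exactly the right side of Lemma~\ref{keylemma2}, because the parts of $\ul{q}$ that its type forces to have even multiplicity do not change the alternating sum; so the Richardson shortcut is unnecessary.
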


The proof of the above proposition is similar to that of Lemma \ref{keylemma2}, hence is omitted here. 

begin \begin{rmk}\label{rmk:keylemma-v3}
{In our setting, Lemma \ref{keylemma2} must be replaced by corresponding analogues in which the right hand side is computed in the Lie algebras of the connected stabilizers ${\RG_i'}^{\theta}$ of the points $\theta_{\widehat{\mathrm{G}}_i'}=1 \rtimes \wt{\theta}(N_i)$. Concretely, the desired target partitions are
}\begin{align*}
{(\ul{p}(\psi^1)^t)_{\SO_{2n_1+1}} \text{ and } (\ul{p}(\psi^2)^t)_{\Sp_{2n_2}}, }&{\qquad \text{when } \mathrm{G}_n=\Sp_{2n},}\\
{(\ul{p}(\psi^i)^t)_{\Sp_{2n_i}},\ i=1,2, }&{\qquad \text{when } \mathrm{G}_n=\SO_{2n+1}, \SO_{2n}^{\alpha}.
}\end{align*}
\end{rmk}

\begin{prop}\label{prop:keylemma-v3-all}
{The following dimension identity holds:
}\begin{align*}
\begin{split}
    &{\, \dim(\frak{g}_{n}) - \dim_{\frak{g}_{n}}(\eta_{\hat{\frak{g}}_n, \frak{g}_{n}}(}[{b_1^{a_1} \cdots b_r^{a_r}}]{)) }\\
    {= }&{\, \dim(\frak{g_1'}^{\theta}) + \dim(\frak{g_2
    '}^{\theta}) -\dim_{\frak{g_1'}^{\theta}}((}[{\prod_{i\in I}b_i^{a_i}}]{^t)_{{\RG'_1}^{\theta}})-\dim_{\frak{g_2'}^{\theta}}((}[{\prod_{j\in J}b_j^{a_j}}]{^t)_{{\RG'_2}^{\theta}}).
    }\end{split}
{}\end{align*}
\end{prop}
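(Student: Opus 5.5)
The plan is to deduce the proposition from Lemma \ref{keylemma2} rather than redo the whole combinatorics. The left-hand sides of the two identities are literally the same quantity. So it suffices to compare, factor by factor, the codimension $\dim(\frak{g}) - \dim_{\frak{g}}(\mathcal{O})$ of the relevant orbits in the dual Lie algebras $\hat{\frak{g}}'_i$ with that in the twisted-centralizer Lie algebras $\frak{g_i'}^{\theta}$. Throughout I use the formulas of \cite[Corollary 6.1.4]{CM93} recalled in the proof of Lemma \ref{keylemma2}. They give, for a symplectic orbit, codimension $\tfrac12\sum_i s_i^2+\tfrac12\sum_{i\text{ odd}} r_i$; for an orthogonal orbit in $\frak{so}_{2k+1}$, codimension $\tfrac12\sum_i s_i^2-\tfrac12\sum_{i\text{ odd}} r_i$ (after subtracting $k$ appropriately, exactly as in the displayed computations there); and for an orthogonal orbit in $\frak{so}_{2k}$, codimension $\tfrac12\sum_i s_i^2-\tfrac12\sum_{i \text{ odd}} r_i$ up to the same constant shift.

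\textbf{Case $\mathrm{G}_n=\SO_{2n+1}$.} Here ${\RG'_i}^{\theta}=\Sp_{2n_i}=\widehat{\mathrm{G}}'_i$ for $i=1,2$. Hence the right-hand side coincides verbatim with that of Lemma \ref{keylemma2}, and there is nothing to prove.

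\textbf{Case $\mathrm{G}_n=\Sp_{2n}$.} The first factor is unchanged, since ${\RG'_1}^{\theta}=\SO_{2n_1+1}=\widehat{\mathrm{G}}'_1$. For the second factor we must replace $\SO_{2n_2}$ by $\Sp_{2n_2}$. All $b_j$, $j\in J$, are even, so $\ul{p}_3=[\prod_{j\in J}b_j^{a_j}]^t$ has every part occurring with even multiplicity (columns of an even-length row come in pairs). Thus $\ul{p}_3$ is simultaneously a symplectic and an orthogonal partition, and both collapses are trivial, as already recorded in the proof of Lemma \ref{keylemma2}. The two codimension formulas differ only in the sign of $\tfrac12\sum_{i\text{ odd}} r_i^{\ul{p}_3}$ and in the constant ($\dim\frak{sp}_{2n_2}-\dim\frak{so}_{2n_2}=2n_2$, which is cancelled by the difference $2n_2^2+n_2$ versus $2n_2^2-n_2$ of the constant terms). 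The proof of Lemma \ref{keylemma2} already shows $\sum_{i\text{ odd}} r_i^{\ul{p}_3}=0$. So the two codimensions agree, and the identity follows from the lemma.

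\textbf{Case $\mathrm{G}_n=\SO_{2n}^{\alpha}$.} Both factors change, from $\SO_{2n_i}$ to $\Sp_{2n_i}$. The $J$-factor is handled exactly as in the $\Sp_{2n}$ case.

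For the $I$-factor, where all $b_i$ are odd, I must show
\[
\operatorname{codim}_{\frak{so}_{2n_1}}\big(([\textstyle\prod_{i\in I}b_i^{a_i}]^t)_{\SO_{2n_1}}\big)=\operatorname{codim}_{\frak{sp}_{2n_1}}\big(([\textstyle\prod_{i\in I}b_i^{a_i}]^t)_{\Sp_{2n_1}}\big).
\]
I will write $[\prod_{i\in I}b_i^{a_i}]$ as an orthogonal partition with only odd parts. Its transpose has parts $q_k=\sum_{b_i\geq k}a_i$, and these are constant on pairs $\{2l,2l+1\}$. I then apply the collapse recipes \cite[Lemmas 6.3.8, 6.3.9]{CM93} to both collapses, using $(\ul{p}^t)_{\SO_{2n_1}}=((\ul{p}^{+-})_{\Sp_{2n_1}})^t$ from \cite[Lemma 3.3]{Ach03} for the orthogonal one. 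This lets me compute $\sum s_i^2$ and $\sum_{i\text{ odd}}r_i$ for each side explicitly, in the same bookkeeping style as in the proof of Lemma \ref{keylemma2}. Each collapse step lowers one part by $1$ and raises a later part by $1$. I expect that the resulting changes in $\sum s_i^2$ differ between the two sides precisely by the difference in the odd-multiplicity term, with the sign flip of the $r$-term compensating.

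This last comparison is the main obstacle. If the direct count becomes messy, I would instead argue via Proposition \ref{prop identity}-type reasoning. Namely, identify both sides with $\dim(\frak{g}) - \dim_{\frak{g}}$ of the Barbasch--Vogan dual orbit, i.e.\ $\eta_{\frak{so}_{2n_1},\frak{so}_{2n_1}}$ versus $\eta_{\frak{so}_{2n_1+1},\frak{sp}_{2n_1}}$-type expressions, and then use that duality preserves the relevant special-piece codimension.
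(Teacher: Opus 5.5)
\textbf{Where the proposal falls short.} Your overall route is the paper's. You reduce to Lemma \ref{keylemma2} and note that nothing changes for $\SO_{2n+1}$. For the $J$-factor you use that $[\prod_{j\in J}b_j^{a_j}]^t$ has only even parts, each of even multiplicity, so both collapses are trivial and $\sum_{a\text{ odd}}r_a=0$. Those parts are correct.

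The gap is the $I$-factor in the $\SO_{2n}^{\alpha}$ case, which is the only genuinely new content of the proposition. You state roughly what the bookkeeping should give and say you ``expect'' it, but you never carry it out. Your fallback does not work as stated:
\begin{enumerate}
\item Barbasch--Vogan duality does not preserve codimension; it exchanges the zero and regular orbits.
\item $\frak{so}_{2n_1}$ and $\frak{sp}_{2n_1}$ do not even have the same dimension.
\end{enumerate}
So as written, the proof of the key step is missing.

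\textbf{How to close it.} Your observation $q_{2l}=q_{2l+1}$ already contains the simplification you did not draw. Write $[\prod_{i\in I}b_i^{a_i}]=[d_1^{c_1}\cdots d_t^{c_t}]$ with $d_1>\cdots>d_t$ odd, and set $C_k=c_1+\cdots+c_k$. Then $\ul{p}_I=[C_t^{d_t}C_{t-1}^{d_{t-1}-d_t}\cdots C_1^{d_1-d_2}]$. Every part has even multiplicity except the largest part $C_t$. Moreover $C_t$ is even, since it counts the odd parts of a partition of $2n_1$. Hence $\ul{p}_I$ is symplectic and $(\ul{p}_I)_{\Sp_{2n_1}}=\ul{p}_I$. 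There is therefore only one collapse, the $D$-collapse $\mu=(\ul{p}_I)_{\SO_{2n_1}}$, to analyze; the detour through $\ul{p}^{+-}$ and \cite[Lemma 3.3]{Ach03} is unnecessary.

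By \cite[Lemma 6.3.8]{CM93}, this $D$-collapse is a single chain:
\begin{enumerate}
\item one copy of $C_t$ becomes $C_t-1$;
\item each of the $h-1$ even parts $2e<C_t$ (all of even multiplicity) has two copies replaced by $2e+1$ and $2e-1$;
\item odd parts are untouched, because the raising step passes through them;
\item a new part $1$ appears.
\end{enumerate}
Hence
\[
\sum_a(s_a^{\mu})^2-\sum_a(s_a^{\ul{p}_I})^2=2\bigl(s^{\ul{p}_I}_1-s^{\ul{p}_I}_{C_t}\bigr)-2\sum_{e}r^{\ul{p}_I}_{2e}+2h=2\sum_{a\text{ odd}}r_a^{\ul{p}_I}+2h,
\]
while exactly $2h$ new odd parts are created, so $\sum_{a\text{ odd}}r^{\mu}_a=\sum_{a\text{ odd}}r^{\ul{p}_I}_a+2h$. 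Substituting into the formulas of \cite[Corollary 6.1.4]{CM93} gives $\cod_{\frak{so}_{2n_1}}(\mu)=\cod_{\frak{sp}_{2n_1}}(\ul{p}_I)$. This is precisely the identity the paper invokes, more tersely, for this step.
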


\begin{proof}
    {The split special odd  orthogonal case is already compatible with Lemma \ref{keylemma2}. The symplectic and special even orthogonal cases can be obtained by a simple codimension comparison as follows. 
}

Assume that $\mathrm{G}_n=\Sp_{2n}$. 
{By Lemma \ref{keylemma2}, it is enough to replace the even orthogonal contribution on the right hand side by the corresponding symplectic contribution. Let
}$${\ul{p}_J=}[{\prod_{j\in J}b_j^{a_j}}]{^t.}$$
{Since $\mathrm{G}_n=\Sp_{2n}$, the partition $[b_1^{a_1} \cdots b_r^{a_r}]$ is orthogonal. Hence every even part $b_j$ occurs with even multiplicity $a_j$, $j \in J$. It follows that every part of $\ul{p}_J$ is even, because the parts of $\ul{p}_J$ are partial sums of the even integers $a_j$. Moreover each part of $\ul{p}_J$ occurs with even multiplicity, since these multiplicities are differences of even parts $b_j$ (with $0$ allowed at the end). Therefore, $\ul{p}_J$ is simultaneously an orthogonal partition and a symplectic partition of $2n_2$, and
}$$
{(\ul{p}_J)_{\SO_{2n_2}}=(\ul{p}_J)_{\Sp_{2n_2}}=\ul{p}_J.
}$$

{Writing $\ul{p}_J=[q_1 q_2 \cdots q_l]$ with $q_1 \geq \cdots \geq q_l$ and using the notation $r_i^{\ul{p}_J}$ and $s_i^{\ul{p}_J}$ from the proof of Lemma \ref{keylemma2}, we have
}$$
{\sum_{i \text{ odd }} r_i^{\ul{p}_J}=0,
}$$
{because all parts of $\ul{p}_J$ are even. Hence, by \cite[Corollary 6.1.4]{CM93},
}\begin{align*}
{\dim(\mathfrak{sp}_{2n_2})-\dim_{\mathfrak{sp}_{2n_2}}(\ul{p}_J)
}&{= \frac{1}{2} \sum_i \left(s_i^{\ul{p}_J}\right)^2 }\\
&{= \dim(\mathfrak{so}_{2n_2})-\dim_{\mathfrak{so}_{2n_2}}(\ul{p}_J).
}\end{align*}
{Thus the even-factor codimension appearing in Lemma \ref{keylemma2} is unchanged when we pass from $\mathfrak{so}_{2n_2}$ to $\mathfrak{sp}_{2n_2}$, and the displayed identity follows.
}

Next we assume that $\mathrm{G}_n=\SO_{2n}^{\alpha}$. 
{By Lemma \ref{keylemma2}, it is enough to replace the two orthogonal factor codimensions on its right hand side by the corresponding symplectic factor codimensions. Put
}\[
{\ul{p}_I=[\prod_{i\in I}b_i^{a_i}]^t,\qquad
\ul{p}_J=[\prod_{j\in J}b_j^{a_j}]^t.}
\]
{If one of $I,J$ is empty, the corresponding rank-zero assertion is understood in the evident sense.}

{The $J$-factor is exactly the same comparison as in the $\Sp_{2n}$ case above. Since $[b_1^{a_1}\cdots b_r^{a_r}]$ is an orthogonal partition, every even $b_j$ has even multiplicity. Hence every part of $\ul{p}_J$ is even, and every part occurs with even multiplicity. Thus
}\[
{(\ul{p}_J)_{\SO_{2n_2}}=(\ul{p}_J)_{\Sp_{2n_2}}=\ul{p}_J,\qquad
\sum_{a\text{ odd}}r_a^{\ul{p}_J}=0.}
\]
{Using the dimension formulas in \cite[Corollary 6.1.4]{CM93}, this gives
}\[
{\dim(\mathfrak{so}_{2n_2})-\dim_{\mathfrak{so}_{2n_2}}(\ul{p}_J)
=\frac{1}{2}\sum_a(s_a^{\ul{p}_J})^2
=\dim(\mathfrak{sp}_{2n_2})-\dim_{\mathfrak{sp}_{2n_2}}(\ul{p}_J).}
\]

{It remains to compare the $I$-factor. Write
}\[
{[\prod_{i\in I}b_i^{a_i}]=[d_1^{c_1}\cdots d_t^{c_t}],\qquad
d_1>\cdots>d_t,\quad d_k\text{ odd},}
\]
{and set $C_k=c_1+\cdots+c_k$. Since the total size $\sum_k c_kd_k$ is even and all $d_k$ are odd, $C_t$ is even. The transpose has the form
}\[
{\ul{p}_I=[C_t^{d_t}C_{t-1}^{d_{t-1}-d_t}\cdots C_1^{d_1-d_2}].}
\]
{For $k<t$, the multiplicity $d_k-d_{k+1}$ is even. Therefore every odd part of $\ul{p}_I$ has even multiplicity, and $\ul{p}_I$ is a symplectic partition; in particular $(\ul{p}_I)_{\Sp_{2n_1}}=\ul{p}_I$. Let
}\[
{\mu_I=(\ul{p}_I)_{\SO_{2n_1}}.}
\]
{The $D$-collapse recipe of \cite[Lemma 6.3.8]{CM93}, applied to the displayed form of $\ul{p}_I$, gives
}\[
{\sum_a(s_a^{\mu_I})^2-\sum_a(s_a^{\ul{p}_I})^2
=\sum_{a\text{ odd}}r_a^{\mu_I}+\sum_{a\text{ odd}}r_a^{\ul{p}_I}.}
\]
{Substituting this equality into the symplectic and even orthogonal dimension formulas in \cite[Corollary 6.1.4]{CM93} yields
}\[
{\dim(\mathfrak{sp}_{2n_1})-\dim_{\mathfrak{sp}_{2n_1}}(\ul{p}_I)
=\dim(\mathfrak{so}_{2n_1})-\dim_{\mathfrak{so}_{2n_1}}(\mu_I).}
\]
{Combining the $I$- and $J$-factor codimension equalities with Lemma \ref{keylemma2} gives the displayed identity.}
\end{proof}

At the end of this section, we record another dimension equality which will be used in later sections. 

\begin{lem}\label{lem:onefactor-soeven-codim}
{Let $\lambda$ be an orthogonal partition of $2n$, and put $\tau=\lambda^t$. Then
}\[
{\dim(\mathfrak{so}_{2n})-\dim_{\mathfrak{so}_{2n}}(\tau_{\SO_{2n}})
=\dim(\mathfrak{sp}_{2n})-\dim_{\mathfrak{sp}_{2n}}(\tau_{\Sp_{2n}}).}
\]
\end{lem}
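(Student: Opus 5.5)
We use the dimension formulas of \cite[Corollary 6.1.4]{CM93}, exactly as in the proofs of Lemma \ref{keylemma2} and Proposition \ref{prop:keylemma-v3-all}. Write $\mu=\tau_{\SO_{2n}}$ and $\nu=\tau_{\Sp_{2n}}$. Since $\dim(\mathfrak{so}_{2n})=2n^2-n$ and $\dim(\mathfrak{sp}_{2n})=2n^2+n$, the formulas give
\begin{align*}
\dim(\mathfrak{so}_{2n})-\dim_{\mathfrak{so}_{2n}}(\mu)&=\tfrac12\sum_i (s_i^{\mu})^2-\tfrac12\sum_{i\text{ odd}}r_i^{\mu},\\
\dim(\mathfrak{sp}_{2n})-\dim_{\mathfrak{sp}_{2n}}(\nu)&=\tfrac12\sum_i (s_i^{\nu})^2+\tfrac12\sum_{i\text{ odd}}r_i^{\nu}.
\end{align*}
The lemma is therefore equivalent to the combinatorial identity
\[
\sum_i (s_i^{\mu})^2-\sum_i (s_i^{\nu})^2=\sum_{i\text{ odd}}r_i^{\mu}+\sum_{i\text{ odd}}r_i^{\nu}.
\]

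To prove this identity, we first describe the shape of $\tau$. Write $\lambda=[d_1^{c_1}\cdots d_t^{c_t}]$ with $d_1>\cdots>d_t$, where the even $d_k$ have even $c_k$. Put $C_k=c_1+\cdots+c_k$. Then
\[
\tau=[C_t^{d_t}C_{t-1}^{d_{t-1}-d_t}\cdots C_1^{d_1-d_2}].
\]
The odd/even pattern of $\tau$ is governed by the parities of the $C_k$ and of the gaps $d_k-d_{k+1}$, as in the $I$-factor computation of Proposition \ref{prop:keylemma-v3-all}. We then apply the collapse recipes \cite[Lemmas 6.3.8, 6.3.9]{CM93} to $\tau$, once for type $D$ and once for type $C$. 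Each recipe acts locally on a maximal run of "bad" parts: odd parts with odd multiplicity for type $C$, and even parts with odd multiplicity for type $D$. On such a run it lowers one part by $1$ and raises a later part by $1$, and it turns each intermediate block $x^{h}$ of the wrong parity into $(x+1)x^{h-2}(x-1)$.

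We then compare the two collapses run by run. On any stretch where $\tau$ is already both orthogonal and symplectic (all parts even with even multiplicity, the analogue of the $J$-factor argument), we have $\mu=\nu=\tau$ locally. There both sides of the identity contribute $0$, since no odd parts occur. On the remaining stretches we compute the local change of $\sum_i s_i^2$ under a single move $x\mapsto x-1$, $y\mapsto y+1$; it equals $2(y-x)+2$ in the transposed coordinates. We add the correction $2$ for each $(x+1)x^{h-2}(x-1)$ block and match the result against the count of odd-part multiplicity changes. This is the same bookkeeping as the $\SO_{2n}^{\alpha}$ and $\Sp_{2n}$ cases of Lemma \ref{keylemma2}. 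A cleaner route may be to use $\nu=((\tau^{t})^{\Sp})^{t}$ and $\mu$ via \cite[Lemma 3.3]{Ach03}, namely $\mu=((\lambda^{+-})_{\Sp_{2n}})^t$. The identity then compares the $\Sp$-collapses of $\lambda^{+-}$ and of the $\Sp$-expansion of $\lambda$, which reduces everything to $C$-type recipes.

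The main obstacle is this run-by-run matching. The two recipes have opposite parity conditions, so their runs interleave, and the boundary effects at the largest and smallest parts must be tracked separately. These boundary effects are the analogue of the Case (1)/(2) split ($r_1\neq 0$ versus $r_1=0$) in the proof of Lemma \ref{keylemma2}. We would first attempt the Achar reduction, so that both sides become $\Sp$-collapses, and fall back on direct case analysis only if the boundary terms do not cancel cleanly.
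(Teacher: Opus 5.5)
Your reduction is correct and is the same as the paper's: by \cite[Corollary 6.1.4]{CM93} the lemma is equivalent to $\sum_a (s_a^{\mu})^2-\sum_a (s_a^{\nu})^2=\sum_{a\text{ odd}}r_a^{\mu}+\sum_{a\text{ odd}}r_a^{\nu}$, with $\mu=\tau_{\SO_{2n}}$ and $\nu=\tau_{\Sp_{2n}}$. That identity, however, is the entire content of the lemma, and you leave it unproved.

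The orthogonality of $\lambda$ is essential, yet your sketch never puts it to work. For $\tau=[3,1]$, where $\lambda=[2,1^2]$ is not orthogonal, $\tau_{\SO_4}=[3,1]$ has codimension $2$ in $\mathfrak{so}_4$, while $\tau_{\Sp_4}=[2^2]$ has codimension $4$ in $\mathfrak{sp}_4$. So no run-by-run bookkeeping can close unless it says exactly how orthogonality of $\lambda$ makes the boundary terms cancel.

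Two of your concrete claims also fail. First, the collapses are not local in the way you assume, because a $D$-chain runs through even parts of even multiplicity. For the orthogonal $\lambda=[3^2,1^2]$ one has $\tau=\nu=[4,2^2]$ but $\mu=[3^2,1^2]$, so $\mu=\nu=\tau$ fails on the stretch $[2^2]$. Second, your ``cleaner route'' rests on $\nu=((\tau^t)^{\Sp})^t$. The relation $(\ul{p}^t)_{\Sp_{2n}}=(\ul{p}^{\Sp_{2n}})^t$ from the proof of \cite[Theorem 6.3.11]{CM93} needs $\ul{p}$ symplectic, whereas here $\ul{p}=\lambda$ is orthogonal. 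It fails for $\lambda=[3,1]$: there $\nu=\tau=[2,1^2]$, but the symplectic expansion of $[3,1]$ is $[4]$, whose transpose is $[1^4]$.

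The missing idea, which is what the paper does, is to decouple the two collapses before any counting. First show that $\mu=\nu_{\SO_{2n}}$ and that $\nu^t$ is again orthogonal; this is where orthogonality of $\lambda$ enters.

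The parts of $\tau$ that are bad for the $C$-collapse are the odd $a$ with $\lambda_a-\lambda_{a+1}$ odd, i.e.\ the boundary indices of maximal blocks of even entries of $\lambda$. These blocks have even length, so the $C$-collapse pairs the two odd boundary indices of a single block. The remaining bad even parts are then exactly the $D$-chains of $\tau$.

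After that you only compare the symplectic partition $\nu$ with its own $D$-collapse. Take one chain $2d_0>2d_1>\cdots>2d_h\geq 0$, and let $I_D$ be the set of odd $a$ with $2d_h<a<2d_0$. Along this chain the change in $\sum_a s_a^2$ is exactly $2\sum_{a\in I_D}r_a^{\nu}+2h$, and the chain creates exactly $2h$ new odd parts, all lying in $I_D$. Because $\nu^t$ is orthogonal, every odd part of $\nu$ lies in some $I_D$. Summing over the disjoint chains therefore gives your identity.
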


\begin{proof}
{Set
}\[
{\rho=\tau_{\Sp_{2n}},\qquad \mu=\tau_{\SO_{2n}}.}
\]
{We first recall the compatibility of the two collapses in this special situation. Since $\lambda=\tau^t$ is an orthogonal partition, the explicit collapse recipe of \cite[Lemma 6.3.8]{CM93} gives
}\[
{\mu=(\tau_{\Sp_{2n}})_{\SO_{2n}}=\rho_{\SO_{2n}},}
\]
{and $\rho^t$ is again an orthogonal partition. Indeed, writing
$r_a^\tau=\lambda_a-\lambda_{a+1}$, the parts of $\tau$ which are bad for the $C$-collapse are exactly the odd integers $a$ for which the two consecutive column lengths $\lambda_a$ and $\lambda_{a+1}$ have opposite parity. Let $u,\ldots,v$ be a maximal block of even entries in the ordered partition $\lambda$. Its length is even. Hence the two boundary indices $u-1$ and $v$ (when both boundaries occur) have the same parity. If this common parity is odd, the $C$-collapse pairs the two bad odd parts $v$ and $u-1$ and replaces them by the adjacent even parts $v-1$ and $u$; if this common parity is even, no $C$-move is made at that block and the two boundary parts remain bad for the subsequent $D$-collapse. At the top or bottom of $\lambda$ there is only one boundary, and its index is even, so it is also a $D$-boundary. Thus the $C$-collapse only removes the odd bad boundary pairs coming from even blocks of $\lambda$, and after these replacements the even column-length blocks still have even length. This is the assertion that $\rho^t$ is orthogonal. The remaining bad even parts, with the usual terminal $0$ if needed, are then exactly the $D$-collapse chains which give the direct $D$-collapse of $\tau$; hence the displayed identity for $\mu$.}

{It remains to prove the codimension identity for such a partition $\rho$. We write $r_a^\sigma$ and $s_a^\sigma$ for the multiplicities and column lengths of any partition $\sigma$, as in the proof of Lemma \ref{keylemma2}. By \cite[Corollary 6.1.4]{CM93}, for a symplectic partition $\sigma$ of $2n$ and an orthogonal partition $\nu$ of $2n$,}
\[
{\cod_{\mathfrak{sp}_{2n}}(\sigma)
=\frac{1}{2}\left(\sum_a(s_a^\sigma)^2+\sum_{a\text{ odd}}r_a^\sigma\right),}
\]
\[
{\cod_{\mathfrak{so}_{2n}}(\nu)
=\frac{1}{2}\left(\sum_a(s_a^\nu)^2-\sum_{a\text{ odd}}r_a^\nu\right),}
\]
{where $\cod_{\mathfrak h}$ means $\dim(\mathfrak h)$ minus the orbit dimension. We shall prove
}\[
{\cod_{\mathfrak{so}_{2n}}(\rho_{\SO_{2n}})
=\cod_{\mathfrak{sp}_{2n}}(\rho).}
\]

{Let $\nu=\rho_{\SO_{2n}}$. Since $\rho$ is symplectic, its odd parts occur with even multiplicity; since $\rho^t$ is orthogonal, every odd part of $\rho$ lies between two ends of one of the $D$-collapse chains below. Group the even parts of $\rho$ which have odd multiplicity into the standard $D$-collapse chains. One such chain has even entries
}\[
{2d_0>2d_1>\cdots>2d_h\geq 0,}
\]
{where $2d_0$ and $2d_h$ have odd multiplicity (with $2d_h=0$ allowed as the terminal dummy part), and $2d_1,\ldots,2d_{h-1}$ are the even parts of $\rho$ strictly between them. On this chain the $D$-collapse replaces one copy of $2d_0$ by $2d_0-1$, one copy of $2d_h$ by $2d_h+1$ if $d_h>0$ (and adds one copy of $1$ if $d_h=0$), and for each intermediate even part $2d_j$ replaces two copies of $2d_j$ by $2d_j+1$ and $2d_j-1$. Odd parts already present in $\rho$ are left unchanged.}

{For this chain, the changes in the column lengths are exactly as follows: $s_{2d_0}$ decreases by $1$, $s_{2d_h+1}$ increases by $1$, and for each intermediate $j$ both $s_{2d_j}$ decreases by $1$ and $s_{2d_j+1}$ increases by $1$. Denote by $\Delta_D$ the contribution of this single $D$-collapse chain to the total change $\sum_a(s_a^\nu)^2-\sum_a(s_a^\rho)^2$. Then}
\[
{\Delta_D=-2s_{2d_0}^{\rho}+1+2s_{2d_h+1}^{\rho}+1
\;+\sum_{j=1}^{h-1}\left(2(s_{2d_j+1}^{\rho}-s_{2d_j}^{\rho})+2\right).}
\]
{Put
}\[
{I_D=\{a\mid a\text{ is odd and }2d_h<a<2d_0\}.}
\]
{Since $s_{2d_h+1}^{\rho}-s_{2d_0}^{\rho}$ counts all parts of $\rho$ with sizes in the interval $[2d_h+1,2d_0-1]$, and since
$s_{2d_j+1}^{\rho}-s_{2d_j}^{\rho}=-r_{2d_j}^{\rho}$, we get the exact cancellation}
\[
{s_{2d_h+1}^{\rho}-s_{2d_0}^{\rho}
+\sum_{j=1}^{h-1}(s_{2d_j+1}^{\rho}-s_{2d_j}^{\rho})
=\sum_{a\in I_D}r_a^\rho.}
\]
{Thus this single-chain contribution is}
\[
{\Delta_D=2\sum_{a\in I_D}r_a^\rho+2h.}
\]
{Now compare this with the odd multiplicities. The $D$-collapse does not change any old odd part, and the only odd parts created by this chain lie in $I_D$: one from $2d_0$, one from $2d_h$ (with $2d_h=0$ contributing the new part $1$), and two from each intermediate $2d_j$. Counted with multiplicity, this creates exactly $2h$ odd parts in $I_D$. Hence}
\[
{\sum_{a\in I_D}r_a^\nu
=\sum_{a\in I_D}r_a^\rho+2h,}
\]
{and therefore}
\[
{\Delta_D=\sum_{a\in I_D}r_a^\nu+\sum_{a\in I_D}r_a^\rho.}
\]
{The intervals $I_D$ for the distinct collapse chains are disjoint and together account for exactly the odd parts whose multiplicities enter the change. Summing over the chains gives}
\[
{\sum_a(s_a^\nu)^2-\sum_a(s_a^\rho)^2
=\sum_{a\text{ odd}}r_a^\nu+\sum_{a\text{ odd}}r_a^\rho.}
\]
{Substituting this identity into the two codimension formulas above proves}
\[
{\cod_{\mathfrak{so}_{2n}}(\rho_{\SO_{2n}})
=\cod_{\mathfrak{sp}_{2n}}(\rho).}
\]
{Since $\rho=\tau_{\Sp_{2n}}$ and $\rho_{\SO_{2n}}=\tau_{\SO_{2n}}$, this is the desired identity.}
\end{proof}

end \section{Construction of elements in local Arthur packets}\label{nonvan}

In this section, we construct a particular element in each local Arthur packet which plays an important role towards proving Part (3) of Conjecture \ref{cubmfclocal}. 

First, we recall
from \cite{JS04, Liu11, JL14, JL22} on how to associate
an irreducible representation of $G_{n}$ to each local
Langlands parameter $\phi \in \Phi(G_{n})$. 
The key idea is to analyze the structure of each local
Langlands parameter.

\begin{prop}[\cite{JS04, Liu11, JL14, JL22}]\label{prop2}
Given a $\phi \in \Phi(G_{n})$. Then either $\phi \in \Phi^{(t)}(G_{n})$, 
or 
\begin{equation} \label{equ71}
\phi = \phi^{(t)} \oplus \phi^{(n)}, 
\end{equation}
where $\phi^{(t)} \in \Phi^{(t)}(G_{n^{*}})$ $(n^{*} < n)$ and 
$\phi^{(n)} \in \Phi(G'_{n-n^{*}})$ which is of the form
\begin{equation} \label{equ67}
\phi^{(n)} = \bigoplus_{i=1}^f \left[|\cdot|^{-q_i+\frac{w_i}{2}} \phi_i \otimes
S_{w_i+1} \oplus |\cdot|^{q_i-\frac{w_i}{2}} \widetilde{\phi_i} \otimes
S_{w_i+1}\right], 
\end{equation}
where 
$$\mathrm{G}'_{n-n^*}=\begin{cases}
\SO^{\alpha}_{2(n-n^*)}, & \text{ when } \mathrm{G}_n=\Sp_{2n}, \SO_{2n}^{\alpha},\\
\SO_{2(n-n^*)+1}, & \text{ when } \mathrm{G}_n=\SO_{2n+1},
\end{cases}$$
$f \in \mathbb{Z}_{>0}$,
$w_1, w_2, \dots, w_f \in \mathbb{Z}_{\geq0}$, $q_1, q_2, \dots, q_f \in \mathbb{R}$, 
such that for $1 \leq i \leq f$,
$q_i \neq \frac{w_i}{2}$, $\phi_i$ is an
irreducible bounded representation of $W_F$, and for $1 \leq i \leq f-1$,
$$\frac{w_i}{2} - q_i \geq \frac{w_{i+1}}{2} - q_{i+1} > 0,$$
$|\cdot|^s$ is the character of $W_F$
normalized as in \cite{Tat79} via local class field theory.
\end{prop}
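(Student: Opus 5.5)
This statement is recalled from \cite{JS04, Liu11, JL14, JL22}. If one wants a self-contained argument, the plan is to work with the local Langlands parameter $\phi$ as a representation of the Weil--Deligne group, i.e. of $W_F\times\SL_2(\BC)$. The first step is to decompose $\phi$ into irreducible constituents of the form $|\cdot|^{x}\phi'\otimes S_{w+1}$, where $\phi'$ is bounded and irreducible and $x\in\mathbb{R}$. Next, I would use self-duality of $\phi$: since $\phi$ preserves an orthogonal or symplectic form, the multiset of constituents is stable under $\tau\mapsto\widetilde{\tau}$. Now $\widetilde{|\cdot|^{x}\phi'\otimes S_{w+1}}=|\cdot|^{-x}\widetilde{\phi'}\otimes S_{w+1}$, so constituents with $x\neq 0$ pair up with their duals, which have exponent $-x$.

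Write the exponent as $x=-q+\frac{w}{2}$; the condition $x\neq0$ is exactly $q\neq \frac w2$. I would define $\phi^{(t)}$ to be the sum of all constituents with $x=0$, which are bounded on $W_F$. I would define $\phi^{(n)}$ to be the sum over the pairs $\{\tau,\widetilde\tau\}$ with $x\neq 0$. For each such pair, choose the representative whose exponent is positive, so that $\frac{w_i}{2}-q_i>0$. Order the pairs so that the exponents $\frac{w_i}{2}-q_i$ are non-increasing. This gives \eqref{equ67} together with the inequalities in the statement.

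The remaining point, and the only real obstacle, is showing that $\phi^{(t)}$ is itself a parameter of the smaller group of the same type, $\mathrm{G}_{n^*}$, and that $\phi^{(n)}$ factors through ${}^L\mathrm{G}'_{n-n^*}$. For $\phi^{(n)}$, each summand $\tau\oplus\widetilde\tau$ with $\tau\not\cong\widetilde\tau$ carries a canonical form of either sign, given by the hyperbolic form on $\tau\oplus\tau^\vee$. Hence $\phi^{(n)}$ is both orthogonal and symplectic of even dimension, which lets us place it in the group $\mathrm{G}'$ listed in the statement. It then follows that $\phi^{(t)}$ is the orthogonal complement of $\phi^{(n)}$, so it inherits the bilinear form of $\phi$ and has the correct parity of dimension: odd in the $\Sp_{2n}$ case and even otherwise.

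In the $\SO^\alpha_{2n}$ case, one must also check the determinant, or central character, condition. Since $\det\phi^{(n)}$ is trivial (a hyperbolic sum has trivial determinant), we get $\det\phi^{(t)}=\det\phi$. This is why the twist $\alpha$ is assigned to $\mathrm{G}'$ as stated, with the discriminant tracked through the splitting.

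If no $x\neq0$ occurs, then $\phi=\phi^{(t)}$ is tempered, which is the first alternative. Otherwise $n^*<n$, and we obtain the decomposition \eqref{equ71}.
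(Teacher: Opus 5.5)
Your overall route is the right one: split off the constituents $|\cdot|^{x}\phi'\otimes S_{w+1}$ with $x\neq 0$, pair each with its contragredient, keep the representative with $x>0$, and order by $x$. The paper gives no proof of its own and simply quotes the result from \cite{JS04, Liu11, JL14, JL22}; this decomposition is the standard argument behind it.

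However, the step you yourself call the only real obstacle is not proved by what you write. The hyperbolic form you put on $\tau\oplus\widetilde{\tau}$ is an abstract form on the abstract representation $\phi^{(n)}$. It carries no information about the given $\phi$-invariant form $B$ on the space $V$ of $\phi$. So ``it then follows that $\phi^{(t)}$ is the orthogonal complement of $\phi^{(n)}$'' does not follow.

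The missing argument is a Schur-lemma one.
\begin{itemize}
\item Decompose $V=\bigoplus_{\sigma}V_{\sigma}$ into isotypic components, which is possible because $\phi$ is semisimple.
\item For $v\in V_\tau$, the map $v\mapsto B(v,\cdot)|_{V_\sigma}$ is an intertwiner $V_{\tau}\to V_{\sigma}^{\vee}$. Since $V_\sigma^\vee$ is $\widetilde{\sigma}$-isotypic, $B(V_\tau,V_\sigma)=0$ unless $\sigma\cong\widetilde{\tau}$.
\item The contragredient $\widetilde{\tau}$ has exponent $-x$ when $\tau$ has exponent $x$. Hence the sum $V^{(t)}$ of the isotypic components with $x=0$ is $B$-orthogonal to the sum $V^{(n)}$ of those with $x\neq 0$.
\item Nondegeneracy of $B$ on $V=V^{(t)}\oplus V^{(n)}$ then forces $B|_{V^{(t)}}$ and $B|_{V^{(n)}}$ to be nondegenerate, of the same sign as $B$.
\end{itemize}
This is what makes $\phi^{(t)}$ orthogonal, resp.\ symplectic, of the right type. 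It also handles $\phi^{(n)}$ without any separate hyperbolic-form argument.

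Your determinant remark also needs correcting. The relevant fact is $\det(\tau\oplus\widetilde{\tau})=\det\tau\cdot(\det\tau)^{-1}=1$, so $\det\phi^{(n)}=1$ and $\det\phi^{(t)}=\det\phi$.

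You already need this for $\mathrm{G}_n=\Sp_{2n}$, to see that $\phi^{(t)}$ lands in $\SO_{2n^*+1}(\BC)$ and not merely in $\RO_{2n^*+1}(\BC)$; you only invoke it in the even orthogonal case.

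For $\mathrm{G}_n=\SO^{\alpha}_{2n}$, the same computation shows that $\phi^{(t)}$ is a parameter of $\SO^{\alpha}_{2n^*}=\mathrm{G}_{n^*}$, while $\phi^{(n)}$ is a parameter of the \emph{split} group $\SO_{2(n-n^*)}$. So your computation does not explain the superscript $\alpha$ on $\mathrm{G}'_{n-n^*}$, as you claim. On the contrary, it shows that for nontrivial $\alpha$, and also in the $\Sp_{2n}$ case where there is no $\alpha$ at all, $\mathrm{G}'_{n-n^*}$ must be read as the split even orthogonal group.

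This is harmless for how the proposition is used in Section \ref{nonvan}. There only the tempered part $\phi^{(t)}$ and the $\GL$-segments $\Sigma_i$ built from $\phi^{(n)}$ enter. Still, your write-up should state the discriminant bookkeeping correctly rather than assert the opposite.
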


Given a $\phi \in \Phi(G_{n})$, 
$\phi = \phi^{(t)} \oplus \phi^{(n)}$, as in Proposition \ref{prop2}. 
By \cite{JS04, Liu11, JL14, JL22}, there exists
$\sigma^{(t)} \in \Pi^{(tg)}(G_{n^{*}})$, such that
\begin{equation} \label{equ69}
\iota(\sigma^{(t)}) = \phi^{(t)}. 
\end{equation}
Using the local Langlands reciprocity map $r$ for general linear groups, define
\begin{equation} \label{equ70}
\Sigma_i = [v^{-q_i}r(\phi_i), v^{-q_i+w_i}r(\phi_i)], 1 \leq i
\leq f.
\end{equation}
Let $\sigma$ be the Langlands quotient
of the induced representation
$$\delta(\Sigma_1) \times \delta(\Sigma_2) \times \cdots \times 
\delta(\Sigma_f) \rtimes \sigma^{(t)}.$$ 
Then $\sigma$ is an irreducible representation with local Langlands parameter $\phi$. Recall that $\delta(\Sigma_i)$ is the irreducible essential square-integrable representation attached to the segment $\Sigma_i$. 

Next, we apply the above discussion to construct a particular element in each local Arthur packet. 
Given a local Arthur parameter $$\psi: W_F \times \SL_2(\mathbb{C}) \times \SL_2(\mathbb{C}) \rightarrow {}^L\mathrm{G}_{n}$$
$$\psi = \bigoplus_{i=1}^r \phi_i \otimes S_{m_i} \otimes S_{n_i},$$
as in \eqref{lap}, we have 
$$\phi_{\psi}=\bigoplus_{i=1}^r 
\bigoplus_{j=-\frac{n_i-1}{2}}^{\frac{n_i-1}{2}} |w|^j\phi_i(w) \otimes S_{m_i} (x).$$

Let 
$$\phi^{(t)}_{\psi}=
\bigoplus_{i=1, n_i \text{ odd }}^r 
\phi_i(w) \otimes S_{m_i} (x),$$
and 
$$\phi^{(n)}_{\psi}=\bigoplus_{i=1}^r 
\bigoplus_{j=-\frac{n_i-1}{2}, j\neq 0}^{\frac{n_i-1}{2}} |w|^j\phi_i(w) \otimes S_{m_i} (x).$$
Then $\phi^{(t)}_{\psi} \in \Phi^{(t)}(G_{n^{*}})$ and 
$\phi^{(n)}_{\psi} \in \Phi(G'_{n-n^{*}})$, where 
$$n^{*}=\bigg\lfloor\frac{\sum_{n_i \text{ odd }} k_im_i}{2}\bigg\rfloor.$$  
By \cite{JS04, Liu11, JL14, JL22}, there exists
$\sigma^{(t)} \in \Pi^{(tg)}(G_{n^{*}})$, where ``$tg$" means tempered generic, such that
\begin{equation} \label{equ692}
\iota(\sigma^{(t)}) = \phi^{(t)}_{\psi}. 
\end{equation}
Using the local Langlands reciprocity map $r$ for general linear groups, define
\begin{equation} \label{equ702}
\Sigma_i^j = [v^{j-\frac{m_i-1}{2}}r(\phi_i), v^{j+\frac{m_i-1}{2}}r(\phi_i)], 1 \leq i
\leq r, 0 < j \leq \frac{n_i-1}{2}.
\end{equation}
Shuffle the set $\{0 <  j \leq \frac{n_i-1}{2} | 1 \leq i \leq r\}$ as $\{j_1, \ldots, j_{n'}\}$ such that $j_1 \geq j_2 \geq \cdots \geq j_{n'}$,  where $n'=\sum_{i=1}^r \frac{n_i-1}{2}$. 
Let $\sigma$ be the Langlands quotient
of the induced representation
$$\delta(\Sigma_1) \times \delta(\Sigma_2) \times \cdots \times 
\delta(\Sigma_{n'}) \rtimes \sigma^{(t)}.$$ 
Then $\sigma$ is an irreducible representation with local Langlands parameter $\phi_{\psi}$ and $\sigma$ is in the local Arthur packet corresponding to $\psi$. 

Recall that for $1 \leq i \leq r$, we let $a_i=k_im_i$ and $b_i=n_i$. Let 
$$\Delta_i=\delta[v^{-\frac{m_i-1}{2}}r(\phi_i), v^{\frac{m_i-1}{2}}r(\phi_i)]$$ be the irreducible square-integrable representation attached to the balanced segment 
$[v^{-\frac{m_i-1}{2}}r(\phi_i), v^{\frac{m_i-1}{2}}r(\phi_i)]$, namely, $\Delta_i$ is the unique irreducible subrepresentation of the following induced representation
$$v^{\frac{m_i-1}{2}}r(\phi_i) \times v^{\frac{m_i-2}{2}}r(\phi_i)\times \cdots \times v^{\frac{1-m_i}{2}}r(\phi_i).$$
Let $\zeta(\Delta_i, \big\lfloor \frac{b_i}{2} \big\rfloor)$ be the unique irreducible quotient of the following induced representation
$$v^{\frac{\lfloor \frac{b_i}{2} \rfloor-1}{2}}\Delta_i \times v^{\frac{\lfloor \frac{b_i}{2} \rfloor-2}{2}}\Delta_i \times \cdots 
\times 
v^{\frac{1-\lfloor \frac{b_i}{2} \rfloor}{2}}\Delta_i.$$

Since the segments $v^{\lceil \frac{b_i}{2} \rceil} \left[v^{\frac{1-\lfloor \frac{b_i}{2} \rfloor}{2}}\Delta_i, v^{\frac{\lfloor \frac{b_i}{2} \rfloor-1}{2}}\Delta_i\right]$ are pairwise non-linked, one can easily see the following

\begin{lem}\label{rhoirreducible}
$$\rho = \times_{i=1}^r 
    v^{\frac{\lceil \frac{b_i}{2} \rceil}{2}}
\zeta\left(\Delta_i, \big\lfloor \frac{b_i}{2} \big\rfloor\right)$$
is irreducible and is the Langlands quotient of the following induced representation
$$\delta(\Sigma_1) \times \delta(\Sigma_2) \times \cdots \times 
\delta(\Sigma_{n'}).$$
$\sigma$ is the unique irreducible quotient of the following induced representation 
$$\rho \rtimes \sigma^{(t)}.$$
\end{lem}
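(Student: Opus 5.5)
The plan is to use the Zelevinsky and Langlands classifications for $\GL$ together with the standard Langlands quotient theory for classical groups. The proof has three steps.

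\textbf{Step 1: rewrite each factor as a Langlands quotient.} I would first identify each factor $v^{\frac{\lceil b_i/2\rceil}{2}}\zeta(\Delta_i,\lfloor b_i/2\rfloor)$ in terms of the segments $\Sigma_i^j$. By definition, $\zeta(\Delta_i,\lfloor b_i/2\rfloor)$ is the Langlands quotient of
$$v^{\frac{\lfloor b_i/2\rfloor-1}{2}}\Delta_i\times\cdots\times v^{\frac{1-\lfloor b_i/2\rfloor}{2}}\Delta_i,$$
with exponents in decreasing order. After the twist, the exponents become
$$\frac{\lceil b_i/2\rceil}{2}+\frac{\lfloor b_i/2\rfloor-1}{2}-k, \qquad k=0,\dots,\lfloor b_i/2\rfloor-1.$$
I would check that these are exactly the $j$ with $0<j\le \frac{b_i-1}{2}$ and $j\equiv\frac{b_i-1}{2}\pmod 1$. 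This amounts to comparing the central exponents $\frac{b_i}{4}$-type shifts. If there is a normalization discrepancy in the paper's twist, I would read the twist as the one making $v^{j}\Delta_i=\delta(\Sigma_i^j)$. So each factor is the Langlands quotient of $\times_j\delta(\Sigma_i^j)$, the product over $0<j\le\frac{b_i-1}{2}$ in decreasing $j$.

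\textbf{Step 2: irreducibility of $\rho$.} For different $i$, the multisegments attached to the factors are pairwise non-linked. This is the observation stated before the lemma. By Zelevinsky's criterion (for ladder/Speh-type representations, via Tadi\'c's or M\oe glin--Waldspurger's results), a product of irreducible representations whose segments are mutually non-linked is irreducible.

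\textbf{Step 3: identify $\rho$ as the Langlands quotient.} I would show that $\rho$ is a quotient of $\times_i\bigl(\times_j\delta(\Sigma_i^j)\bigr)$. Then, since all $\delta(\Sigma_i^j)$ and $\delta(\Sigma_{i'}^{j'})$ with $i\ne i'$ are non-linked, the factors may be commuted to rearrange the product into the order $j_1\ge\cdots\ge j_{n'}$. In the Langlands classification, rearranging non-linked factors does not change the standard module up to isomorphism. Hence $\rho$ is a quotient of the standard module $\delta(\Sigma_1)\times\cdots\times\delta(\Sigma_{n'})$, and by uniqueness it is its Langlands quotient.

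\textbf{Step 4: the claim about $\sigma$.} Since $j_k>0$ and $\sigma^{(t)}$ is tempered, $\delta(\Sigma_1)\times\cdots\times\delta(\Sigma_{n'})\rtimes\sigma^{(t)}$ is a standard module with unique irreducible quotient $\sigma$. It surjects onto $\rho\rtimes\sigma^{(t)}$ by exactness of induction. Any irreducible quotient of $\rho\rtimes\sigma^{(t)}$ is therefore a quotient of the standard module, so it equals $\sigma$. Moreover $\sigma$ occurs with multiplicity one in the standard module, hence $\sigma$ is the unique irreducible quotient of $\rho\rtimes\sigma^{(t)}$.

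\textbf{Main obstacle.} The delicate point is the reordering in Step 3. Non-linkedness only gives commutation for pairs with $i\ne i'$; within a fixed $i$ the order is already decreasing, so it suffices. I would also double-check the exponent bookkeeping in Step 1 against the definition of $\Sigma_i^j$.
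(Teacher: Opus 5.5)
\textbf{Gap in Steps 2 and 3.} Steps 1 and 4 are fine. The twist $v^{\lceil b_i/2\rceil/2}$ is exactly the one that makes $v^{j}\Delta_i=\delta(\Sigma_i^j)$. Once $\rho$ is known to be a quotient of $\delta(\Sigma_1)\times\cdots\times\delta(\Sigma_{n'})$, exactness of induction and uniqueness of the Langlands quotient give the assertion about $\sigma$.

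Steps 2 and 3, however, both rest on the claim that $\Sigma_i^j$ and $\Sigma_{i'}^{j'}$ are non-linked whenever $i\neq i'$. This is false.
\begin{itemize}
\item Take $\phi_1=\phi_2=\mathbf{1}$, $m_1=m_2=1$, $b_1=7$, $b_2=5$. The segments are $[v^3],[v^2],[v^1]$ for $i=1$ and $[v^2],[v^1]$ for $i=2$, and $[v^1]$ (from $i=1$) is linked with $[v^2]$ (from $i=2$). Your reordering of $(v^3\times v^2\times v^1)\times(v^2\times v^1)$ into decreasing order would need $v^1\times v^2\cong v^2\times v^1$, which fails.
\item With the same $\phi$ but different $m$ it is worse: $m_1=1,b_1=7$ and $m_2=3,b_2=3$ produce $[v^3]$ and $[v^0,v^2]$, which are juxtaposed.
\end{itemize}
The remark preceding the lemma concerns the segments $\{v^j\Delta_i : j\in J_i\}$ built out of the square-integrable $\Delta_i$. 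That is a statement about the Speh factors as wholes, not about the individual $\Sigma_i^j$. In the first example $\rho=v^{2}\mathbf{1}_{\GL_3}\times v^{3/2}\mathbf{1}_{\GL_2}$ is still irreducible, since its Zelevinsky segments $[v^1,v^3]\supset[v^1,v^2]$ are nested. So the lemma survives there, but not by your argument.

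\textbf{What is missing.} You need an irreducibility result at the level of the Speh (ladder) representations $v^{c_i}\zeta(\Delta_i,L_i)$, with $c_i=\lceil b_i/2\rceil/2$ and $L_i=\lfloor b_i/2\rfloor$.
\begin{itemize}
\item Factors with $\phi_i\not\cong\phi_{i'}$ lie on different cuspidal lines and commute trivially.
\item For $\phi_i\cong\phi_{i'}$ you need a genuine criterion for products of non-unitary Speh representations (Tadi\'c, or Lapid--M\'inguez for ladders). You must then check that these particular twists put the factors in the non-reducing range.
\item Mere non-linkedness of $\Delta$-segments is not enough once $m_i\geq 2$. For example, $v^{x}\mathrm{St}_2\times v^{x+2}\mathrm{St}_2$ reduces, with $\mathrm{St}_2=\delta[v^{-1/2},v^{1/2}]$, because the underlying segments are juxtaposed.
\item What makes things work here is that the recentred factors are nested. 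For instance, if $b_i\equiv b_{i'}\pmod 2$ then $c_i-c_{i'}=(L_i-L_{i'})/2$.
\item You also need some way to pass from irreducibility for pairs of factors to irreducibility of the whole product.
\end{itemize}

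\textbf{Repairing Step 3.} Once irreducibility is in hand, Step 3 needs no reordering at all. Let $\mathfrak m_i$ denote the multisegment of the $i$-th factor. Then $L(\mathfrak m_1+\cdots+\mathfrak m_r)$ always occurs in $L(\mathfrak m_1)\times\cdots\times L(\mathfrak m_r)$. So irreducibility forces $\rho$ to equal this Langlands quotient of $\delta(\Sigma_1)\times\cdots\times\delta(\Sigma_{n'})$, and Step 4 then goes through as you wrote it.
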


\begin{lem}\label{glpart}
$$\frak{p}^m(\rho)=\left\{\left[\big\lfloor \frac{b_1}{2} \big\rfloor^{a_1}
\big\lfloor \frac{b_2}{2} \big\rfloor^{a_2} \cdots \big\lfloor \frac{b_r}{2} \big\rfloor^{a_r}\right]^t\right\}.$$
\end{lem}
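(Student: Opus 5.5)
The plan is to repeat, for $\rho$, the argument used in the proof of Theorem \ref{wfslinear}. By Lemma \ref{rhoirreducible}, $\rho$ is irreducible and equal to the full product
$$\rho = \times_{i=1}^r v^{\frac{\lceil \frac{b_i}{2} \rceil}{2}} \zeta\left(\Delta_i, \big\lfloor \frac{b_i}{2} \big\rfloor\right).$$
So $\rho$ is an irreducible parabolically induced representation of a general linear group from the factors $\rho_i := v^{\frac{\lceil b_i/2\rceil}{2}}\zeta(\Delta_i,\lfloor b_i/2\rfloor)$. Factors with $\lfloor b_i/2\rfloor=0$ are trivial and can be discarded; they contribute nothing to the partition either.

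\textbf{Step 1: each factor.} Twisting by the character $v^{c}$ does not change the wavefront set. Hence $\frak{p}^m(\rho_i)=\frak{p}^m(\zeta(\Delta_i,\lfloor b_i/2\rfloor))$. Here $\Delta_i$ is a square-integrable representation of $\GL_{a_i}(F)$ with $a_i=k_im_i$, and $\zeta(\Delta_i, c)$ is the Speh-type Langlands quotient with $c$ copies. As noted in the proof of Theorem \ref{wfslinear}, with $b_i$ replaced by $c=\lfloor b_i/2\rfloor$, this gives $\frak{p}^m(\zeta(\Delta_i,c))=\{[a_i^{c}]\}$. This is the standard result of M{\oe}glin--Waldspurger \cite{MW87} for ladder/Speh representations: the highest derivative of $\zeta(\Delta_i,c)$ has depth $a_i$ at each of $c$ successive stages.

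\textbf{Step 2: induction.} By \cite[Section 2.1]{MW87}, for an induced representation of $\GL$ the maximal orbit is the induced orbit from the Levi. This is where irreducibility from Lemma \ref{rhoirreducible} is used, since it makes $\rho$ itself equal to the induced representation. By \cite[Lemma 7.2.5]{CM93}, the induced orbit in type $A$ has partition
$$[a_1^{\lfloor b_1/2\rfloor}] + \cdots + [a_r^{\lfloor b_r/2\rfloor}],$$
where the sum is taken columnwise, i.e.\ parts are added entrywise after ordering.

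\textbf{Step 3: transpose.} The entrywise sum of partitions is dual under transposition to the union of partitions. Therefore
$$\sum_i [a_i^{c_i}] = \Big(\bigcup_i [c_i^{a_i}]\Big)^t = [\lfloor b_1/2\rfloor^{a_1}\cdots \lfloor b_r/2\rfloor^{a_r}]^t,$$
which gives the claim.

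The only point needing care is Step 2. One must check that the twists $v^{\lceil b_i/2\rceil/2}$ cause no reducibility, so that $\rho$ really equals the induced representation; this is exactly the non-linkedness recorded in Lemma \ref{rhoirreducible}. The remaining steps are routine.
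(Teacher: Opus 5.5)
Your proof is correct, but for this lemma it is not the paper's route. What you do is transplant the paper's proof of Theorem \ref{wfslinear}:
\begin{itemize}
\item the leading orbit of each twisted Speh factor is $[a_i^{c_i}]$, with $c_i=\lfloor b_i/2\rfloor$;
\item since $\rho$ is the full irreducible product by Lemma \ref{rhoirreducible}, its leading orbit is the induced orbit, by \cite[Section 2.1]{MW87} and \cite[Lemma 7.2.5]{CM93};
\item transposing gives the stated partition.
\end{itemize}

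The paper instead argues with Bernstein--Zelevinsky derivatives:
\begin{itemize}
\item it takes $\frak{p}^m(\zeta(\Delta_i,c_i))=\{[a_i^{c_i}]\}$ from \cite[Theorem 5]{CFK18};
\item it uses \cite[Theorems E and F]{GGS17} to see that the highest derivative of $\zeta(\Delta_i,c_i)$ has order $a_i$ and is a twist of $\zeta(\Delta_i,c_i-1)$;
\item it applies the Leibniz rule \cite[Lemma 4.5]{BZ77} to conclude that the highest derivative of $\rho$ has order $\sum_{c_i\geq 1}a_i$ and is the product of the factors' highest derivatives;
\item it iterates. The successive orders $\sum_{c_i\geq j}a_i$, $j=1,2,\ldots$, are exactly the parts of $[c_1^{a_1}\cdots c_r^{a_r}]^t$, and for $\GL$ this chain of highest derivatives determines the wavefront partition.
\end{itemize}

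Your version is shorter. It uses irreducibility of $\rho$ exactly once, and its only analytic input is that the local character expansion of an induced representation is induced from that of the inducing representation. Character twists do not affect the expansion near the identity, so that step is also fine.

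The paper's version stays inside the derivative (degenerate Whittaker model) description of wavefront sets for $\GL$. It also exhibits the whole chain of highest derivatives of $\rho$, at the cost of redoing the product computation at every stage.

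Both arguments depend on Lemma \ref{rhoirreducible} in the same way. Without it, neither the induced-orbit formula nor the Leibniz-rule computation would apply to $\rho$ itself rather than to the ambient induced representation.
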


\begin{proof}
First, by 
\cite[Theorem 5]{CFK18}, 
$\frak{p}^m(\zeta(\Delta_i, \big\lfloor \frac{b_i}{2} \big\rfloor))=\left\{\left[a_i^{\big\lfloor \frac{b_i}{2} \big\rfloor}\right]\right\}.$ 
By \cite[Theorems E an F]{GGS17},
the highest derivative of $\zeta(\Delta_i, \big\lfloor \frac{b_i}{2} \big\rfloor)$ is 
$\zeta(\Delta_i, \big\lfloor \frac{b_i}{2} \big\rfloor)^{a_i} =
    v^{\lceil \frac{b_i}{2} \rceil-\frac{1}{2}}
\zeta(\Delta_i, \big\lfloor \frac{b_i}{2} \big\rfloor-1)$. 
Then by  
\cite[Lemma 4.5]{BZ77}, 
the highest derivative of $\rho$ is $\rho^{(a_1 + \cdots + a_r)}$, and 
$$\rho^{(a_1 + \cdots + a_r)} = \times_{i=1}^r 
    v^{\lceil \frac{b_i}{2} \rceil}
\zeta\left(\Delta_i, \big\lfloor \frac{b_i}{2} \big\rfloor\right)^{a_i}=\times_{i=1}^r v^{\lceil \frac{b_i}{2} \rceil-\frac{1}{2}}
\zeta\left(\Delta_i, \big\lfloor \frac{b_i}{2} \big\rfloor-1\right).$$
We repeat the above argument to 
$\rho^{(a_1 + \cdots + a_r)}$ and obtain the lemma. 
\end{proof}

Let $\underline{p}_1=[\big\lfloor \frac{b_1}{2} \big\rfloor^{a_1}
\big\lfloor \frac{b_2}{2} \big\rfloor^{a_2} \cdots \big\lfloor \frac{b_r}{2} \big\rfloor^{a_r}]^t$. Recall that $n^{*}=\big\lfloor\frac{\sum_{b_i \text{ odd }} a_i}{2}\big\rfloor.$

\begin{lem}\label{total}
\begin{enumerate}
    \item $
[\underline{p}_1\underline{p}_1 (2n^*)] \in \frak{p}(\sigma), \text{ when } \mathrm{G}_n=\Sp_{2n}$,\\
\item $[\underline{p}_1\underline{p}_1(2n^*+1)] \in \frak{p}(\sigma), \text{ when } \mathrm{G}_n=\SO_{2n+1}$,\\
    \item $
[\underline{p}_1\underline{p}_1(2n^*-1)1] \in \frak{p}(\sigma), \text{ when } \mathrm{G}_n= \SO_{2n}^{\alpha}$.
\end{enumerate}
\end{lem}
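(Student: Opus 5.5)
We would prove Lemma \ref{total} by exhibiting, for the specific representation $\sigma$ constructed above, a nonzero twisted Jacquet module $\sigma_{N_{s',u},\psi_u}$ for a Whittaker pair $(s',u)$ with $u$ in an orbit of the claimed partition. Proposition \ref{ggslocal} then upgrades this to a neutral pair $(s,u)$, so the orbit lies in $\frak{n}_w(\sigma)$. By the standard relation between $\frak{n}_w$ and the character expansion (\cite{MW87}, \cite{Var14}), the corresponding partition lies in $\frak{p}(\sigma)$.

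The starting point is Lemma \ref{rhoirreducible}: $\sigma$ is the unique irreducible quotient of $\rho\rtimes\sigma^{(t)}$. Here $\rho$ is a representation of $\GL_{m}$, where $m=\sum a_i\lfloor b_i/2\rfloor$, and $\sigma^{(t)}$ is tempered generic on $G_{n^*}$. By Lemma \ref{glpart} we have $\frak{p}^m(\rho)=\{\underline p_1\}$, so $\rho$ has a nonzero degenerate Whittaker model of type $\underline p_1$ for the unipotent radical of the parabolic of $\GL_m$ attached to $\underline p_1^t$. Also $\sigma^{(t)}$ is generic, so its wavefront partition is the regular partition: $[2n^*]$ for $\Sp_{2n^*}$, $[2n^*+1]$ for $\SO_{2n^*+1}$, and $[2n^*-1,1]$ for quasi-split $\SO_{2n^*}$.

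The key step is an \emph{induction of models}. The Siegel-type parabolic $P=M_PU_P$ with $M_P=\GL_m\times G_{n^*}$ induces from the orbit $(\underline p_1,\text{regular})$ of $\frak m_P$ the orbit with partition $[\underline p_1\underline p_1\,\underline{q}]$, where $\underline q$ is the regular partition above. This is the standard formula: the parts of $\underline p_1$ are doubled and the regular part is appended, before any collapse. We would choose a Whittaker pair $(s',u)$ for $G_n$ as follows.
\begin{itemize}
\item $s'$ is the sum of the neutral element for $(\underline p_1,\text{regular})$ in $\frak m_P$ and a large multiple of the element defining $P$.
\item $u$ is the sum of the corresponding nilpotent in $\frak m_P$ and a suitable element of $\bar{\frak u}_P$ pairing the $\GL_m$ blocks with the $G_{n^*}$ block, so that $u$ has Jordan type $[\underline p_1\underline p_1\,\underline q]$.
\end{itemize}
We then compute $(\mathrm{Ind}_P^{G_n}\rho\otimes\sigma^{(t)})_{N_{s',u},\psi_u}$ via the geometric lemma / Mackey theory over $P\backslash G_n/P_{s'}$. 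The open-cell contribution is $\rho_{N,\psi}\otimes\sigma^{(t)}_{N,\psi}$, which is nonzero by the two facts above, and it injects into the full twisted Jacquet module. This is the argument used in \cite{JL14, JL22} for the analogous statements, and we would follow it closely.

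The main obstacle is passing from the induced representation to its Langlands quotient $\sigma$: the nonvanishing must survive the quotient map. We plan to realize $\sigma$ as a subrepresentation of the contragredient-type induced $\rho^\vee\rtimes\sigma^{(t)}$ instead, using the standard module conjecture / MVW involution. Alternatively, we would argue via exactness of the twisted Jacquet functor and the fact that the other constituents have strictly smaller exponents, which kills the open-orbit contribution along the relevant $\psi_u$-character.

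If neither route works cleanly, we would use the fact that the multiplicity of this orbit in the induced representation equals the product of multiplicities for $\rho$ and $\sigma^{(t)}$, each equal to one. We would then show the other subquotients $\rho'\rtimes\sigma'$, with $\rho'<\rho$ in the Zelevinsky order, have $\frak{p}^m(\rho')<\underline p_1$, so they cannot carry this orbit. This forces the model to sit on $\sigma$. The three cases $\Sp_{2n}$, $\SO_{2n+1}$ and $\SO_{2n}^\alpha$ differ only in the regular partition $\underline q$ appended, which yields items (1)–(3).
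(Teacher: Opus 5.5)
There is a genuine gap in your key step. The partition $[\ul{p}_1\ul{p}_1\,\ul{q}]$ is not the orbit induced from $(\ul{p}_1,\text{regular})$ on $\mathfrak{m}_P=\mathfrak{gl}_m\oplus\mathfrak{g}_{n^*}$. Doubling the parts of $\ul{p}_1$ and appending $\ul{q}$ describes the $G_n$-saturation of the Levi orbit, i.e.\ the Jordan type of an element $u\in\mathfrak{m}_P$ acting on $X\oplus V_0\oplus X^*$. The induced orbit is instead the collapse of the partition obtained by adding $2(\ul{p}_1)_i$ to the $i$-th part of $\ul{q}$, which is in general much larger.

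Already for $\Sp_2$ and $\ul{p}(\psi)=[3]$ this breaks down. There $\ul{p}_1=[1]$, $n^*=0$, $\rho=|\cdot|$, and $\sigma$ is the trivial representation. The target partition is $[1^2]$, but the induced orbit is $[2]$. Your open-cell computation for $|\cdot|\rtimes 1$ detects the Whittaker model, which lives on the Steinberg subrepresentation and not on $\sigma$.

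So the ``passage to the Langlands quotient'' is not a technicality: for the orbit your Mackey argument controls, the claim is false. None of your fallback routes repairs this:
\begin{enumerate}
\item a subrepresentation of an induced representation does not inherit nonvanishing of a twisted Jacquet module;
\item the multiplicity statement concerns the coefficient of the induced orbit, not of $[\ul{p}_1\ul{p}_1\,\ul{q}]$;
\item the other constituents of $\rho\rtimes\sigma^{(t)}$ need not be of the form $\rho'\rtimes\sigma'$.
\end{enumerate}
Moreover, as written, $(s',u)$ is not a Whittaker pair. If $s'$ contains a large multiple of the grading element of $P$, any nonzero component of $u$ in $\bar{\mathfrak{u}}_P$ has $s'$-eigenvalue far below $-2$.

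The missing idea is to put $u$ in the Levi itself. Take $u=(u_1,u_2)\in\mathfrak{gl}_m\oplus\mathfrak{g}_{n^*}$ with $u_1$ of type $\ul{p}_1$ and $u_2$ regular; its $G_n$-orbit is exactly $[\ul{p}_1\ul{p}_1\,\ul{q}]$. Let $s=h_M-tz$, where $h_M$ is a neutral element for $u$ in $\mathfrak{m}_P$, $z$ is the grading element of $P$, and $t\gg0$. Then $N_{s,u}=\bar{U}_P N_M$ and $\psi_u$ is trivial on $\bar{U}_P$. Hence $\sigma_{N_{s,u},\psi_u}$ is the Levi degenerate Whittaker functor applied to the Jacquet module $r_{\bar P}(\sigma)$, and no analysis of the induced representation is needed.

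Since $\sigma$ is a quotient of $\rho\rtimes\sigma^{(t)}$ (Lemma \ref{rhoirreducible}), Bernstein's second adjointness gives an embedding $\rho\otimes\sigma^{(t)}\hookrightarrow r_{\bar P}(\sigma)$. Exactness of the twisted Jacquet functor, together with Lemma \ref{glpart} and the genericity of $\sigma^{(t)}$, then gives $\sigma_{N_{s,u},\psi_u}\neq 0$. Proposition \ref{ggslocal} upgrades this to the neutral pair, as in your framing. This is the paper's argument.
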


\begin{proof}
First one can take a Whittaker pair $(s,u)$ such that $u$ is a representative of a nilpotent orbits corresponding to the partition $[\underline{p}_1\underline{p}_1(2n^*)]$, or $[\underline{p}_1\underline{p}_1(2n^*+1)]$, or $[\underline{p}_1\underline{p}_1(2n^*-1)1]$, 
and the twisted Jacquet module $\sigma_{N_{s,u}, \psi_u}$ is the usual Jacquet module of $\sigma$ with respect to the parabolic subgroup with Levi isomorphic to $GL_{\sum_{i=1}^r a_i\big\lfloor \frac{b_i}{2} \big\rfloor} \times G_{n^*}$, composed with the twisted Jacquet module corresponding to the partitions $\underline{p}_1$ and $2n^*$, or $(2n^*+1)$, or $(2n^*-1)1$, on $\rho$ and $\sigma$, respectively.
Because $\sigma$ is generic and by Lemma \ref{glpart}, $\sigma_{N_{s,u}, \psi_u}$ is nonzero. 
The the lemma follows from Proposition \ref{ggslocal}. 
\end{proof}

\begin{lem}\label{raise}
\begin{enumerate}
    \item $[\underline{p}_1\underline{p}_1(2n^*)]^{\mathrm{G}_n} \in \frak{p}(\sigma), \text{ when } \mathrm{G}_n=\Sp_{2n}$,\\
\item $[\underline{p}_1\underline{p}_1(2n^*+1)]^{\mathrm{G}_n} \in \frak{p}(\sigma), \text{ when } \mathrm{G}_n=\SO_{2n+1}$,\\
    \item $[\underline{p}_1\underline{p}_1(2n^*-1)1]^{\mathrm{G}_n} \in \frak{p}(\sigma), \text{ when } \mathrm{G}_n= \SO_{2n}^{\alpha}$.
\end{enumerate}
\end{lem}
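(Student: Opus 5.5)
We deduce Lemma \ref{raise} from Lemma \ref{total} by a general principle: the set $\frak{p}(\sigma)$ is stable under passing to the $\mathrm{G}_n$-expansion. Write $\ul{q}$ for the partition appearing in Lemma \ref{total} in the relevant case, namely $[\underline{p}_1\underline{p}_1(2n^*)]$, $[\underline{p}_1\underline{p}_1(2n^*+1)]$ or $[\underline{p}_1\underline{p}_1(2n^*-1)1]$. The plan is to show that some maximal element of $\frak{p}(\sigma)$ dominates $\ul{q}$, and that every such maximal element is special and hence dominates $\ul{q}^{\mathrm{G}_n}$. We then show that $\ul{q}^{\mathrm{G}_n}$ itself has a nonvanishing model.

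\textbf{Step 1: a maximal element above $\ul{q}$.} By Lemma \ref{total}, $\ul{q}\in\frak{p}(\sigma)$, realized through a nonvanishing twisted Jacquet module $\sigma_{N_{s,u},\psi_u}$. Since there are only finitely many nilpotent orbits, there is a maximal element $\CO\in\frak{n}_w^m(\sigma)=\frak{n}^m(\sigma)$ whose closure contains the orbit realizing $\ul{q}$. By \cite{MW87} together with \cite{Var14}, the coefficient $c_{\CO}(\sigma)$ is nonzero, so the partition $\ul{p}_\CO$ of $\CO$ lies in $\frak{p}^m(\sigma)$ and satisfies $\ul{p}_\CO\geq \ul{q}$.

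\textbf{Step 2: specialness.} By Moeglin's theorem \cite{Moe96}, as recalled in the introduction, orbits in $\frak{n}^m(\sigma)$ are admissible, and for classical groups admissible orbits are exactly the special ones. Hence $\ul{p}_\CO$ is a special $\mathrm{G}_n$-partition with $\ul{p}_\CO\geq\ul{q}$. Since $\ul{q}^{\mathrm{G}_n}$ is by definition the smallest special $\mathrm{G}_n$-partition dominating $\ul{q}$ \cite[Section 6.3]{CM93}, we get $\ul{q}^{\mathrm{G}_n}\leq \ul{p}_\CO$.

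\textbf{Step 3: closing the gap.} It remains to place $\ul{q}^{\mathrm{G}_n}$ itself in $\frak{p}(\sigma)$. For this we use the standard fact \cite{MW87,GGS17} that $\frak{n}_w(\sigma)$ is closed downward under the closure ordering. Indeed, if $\CO'\leq\CO$, a Whittaker pair for $\CO'$ degenerates from one for $\CO$, so nonvanishing of the model for $\CO$ gives nonvanishing for a Whittaker pair attached to $\CO'$. Proposition \ref{ggslocal} then converts this into a neutral pair for $\CO'$. Applying this to an $F$-rational orbit with partition $\ul{q}^{\mathrm{G}_n}$ in the closure of $\CO$ gives $\ul{q}^{\mathrm{G}_n}\in\frak{p}(\sigma)$.

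\textbf{Main obstacle.} The hardest point is making Step 3 rigorous at the level of $F$-rational orbits. A geometric orbit with partition $\ul{q}^{\mathrm{G}_n}$ lies in the closure of the geometric orbit of $\CO$, but we must choose an $F$-rational form of it inside $\overline{\CO}$. This is especially delicate for $\SO_{2n}^{\alpha}$, where $\ul{q}$ contains the part $(2n^*-1)1$ and several rational forms occur. I would address it by the explicit degeneration of Whittaker pairs in \cite[Section 3]{GGS17}: start from the neutral pair for $\CO$ and produce a Whittaker pair $(s',u')$ with $u'$ of partition $\ul{q}^{\mathrm{G}_n}$, keeping track of the quadratic forms.
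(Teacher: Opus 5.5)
Your Steps 1 and 2 are sound. Step 3, however, is a genuine gap, and it is more serious than the rationality issue you flag.

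You treat downward closure of $\mathfrak{n}_w(\sigma)$ under the closure ordering as a standard fact from \cite{MW87,GGS17}, but neither reference proves it:
\begin{itemize}
\item \cite{MW87} (with \cite{Var14}) gives only $\mathfrak{n}_w^m(\sigma)=\mathfrak{n}^m(\sigma)$, together with the fact that every orbit in $\mathfrak{n}_w(\sigma)$ lies below one with $c_{\CO}(\sigma)\neq 0$.
\item Proposition \ref{ggslocal} only compares Whittaker pairs with the \emph{same} nilpotent $u$.
\item The other epimorphisms in \cite{GGS17} go from the generalized model of a smaller orbit onto certain \emph{non-neutral} degenerate models of larger orbits. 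To descend with them, you would need such a degenerate model of the larger orbit to be nonzero. That does not follow from nonvanishing of its neutral model; Proposition \ref{ggslocal} gives only the reverse implication.
\end{itemize}

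The mechanism you sketch, degenerating a Whittaker pair for $\CO$ into one for $\CO'$, does not preserve nonvanishing. Take a generic supercuspidal representation and the pair $(s,u)$ with $s$ regular and $u$ principal. Passing to the Whittaker pair $(s,0)$ gives the Jacquet module along the unipotent radical of a Borel subgroup, which is zero. So you have no way to come back down from the special maximal partition $\ul{p}_{\CO}$, which may be strictly larger than $\ul{q}^{\mathrm{G}_n}$, to $\ul{q}^{\mathrm{G}_n}$ itself.

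The paper's proof never passes through maximal orbits. It cites the raising theorem \cite[Theorem 11.2]{JLS16}, which shows directly for classical groups that $\ul{q}\in\mathfrak{p}(\pi)$ implies $\ul{q}^{\mathrm{G}_n}\in\mathfrak{p}(\pi)$. Note the direction of the implications. That raising result already forces maximal partitions in $\mathfrak{p}(\pi)$ to be special, i.e.\ it recovers M{\oe}glin's theorem. M{\oe}glin's theorem, by contrast, yields raising only when combined with a downward-closure statement for $\mathfrak{n}_w(\pi)$, and that statement is exactly the missing ingredient in your argument.
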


\begin{proof}
This follows directly from \cite[Theorem 11.2]{JLS16}. Note that $[\underline{p}_1\underline{p}_1(2n^*)]$ ($[\underline{p}_1\underline{p}_1(2n^*+1)]$, $[\underline{p}_1\underline{p}_1(2n^*-1)1]$, respectively), may not be $\mathrm{G}_n$-special. 
\end{proof}

The following theorem proves certain cases for Part (3) of Conjecture \ref{cubmfclocal}. We recall some
notation from \cite[Section 3.1]{Ach03} as follows.
Given any partition $\ul{p}=[p_1 \cdots p_r]$ with $p_1 \geq \cdots \geq p_r$, $\ul{p}^-=[p_1 \cdots (p_r-1)]$, $\ul{p}_-=[(p_1-1) \cdots p_r]$, 
$\ul{p}^+=[(p_1+1) \cdots p_r]$,
$\ul{p}_+=[p_1 \cdots p_r 1]$. Note that $\ul{p}_-=\ul{p}^{t-t}$, $\ul{p}_+=\ul{p}^{t+t}$.

\begin{thm}\label{mainthm1}
Let $\psi$ be a local Arthur parameter as in \eqref{lap}, with $\ul{p}(\psi) = [b_1^{a_1} b_2^{a_2} \cdots b_r^{a_r}]$ and $b_1 \geq b_2 \geq \cdots \geq b_r$. 
\begin{enumerate}
    \item When $\mathrm{G}_n=\Sp_{2n}$,
    \begin{equation}\label{oriequ}
    [\underline{p}_1\underline{p}_1(2n^*)]^{\Sp_{2n}}=\eta_{\frak{so}_{2n+1}, \frak{sp}_{2n}}([b_1^{a_1} \cdots b_r^{a_r}])
    \end{equation}
if and only if 
\begin{equation}\label{criterion}
    ([\underline{p}_1\underline{p}_1(2n^*)]^t)_{\Sp_{2n}}=([b_1^{a_1} \cdots b_r^{a_r}]^-)_{\Sp_{2n}}.
\end{equation}
In particular, if 
\begin{enumerate}
    \item[(i)] $a_r=b_r=1$ and $b_i$ are all even for $1 \leq i \leq r-1$,
    \item[(ii)] or, $b_i$ are all odd,
\end{enumerate}
 then \eqref{criterion} holds. 

 \item When $\mathrm{G}_n=\SO_{2n+1}$, 
 \begin{equation}\label{oriequ2}
     [\underline{p}_1\underline{p}_1(2n^*+1)]^{\SO_{2n+1}}=\eta_{\frak{sp}_{2n}, \frak{so}_{2n+1}}([b_1^{a_1} \cdots b_r^{a_r}])
 \end{equation}
if and only if 
\begin{equation}\label{criterion2}
    ([\underline{p}_1\underline{p}_1(2n^*+1)]^t)_{\SO_{2n+1}}=([b_1^{a_1} \cdots b_r^{a_r}]^+)_{\SO_{2n+1}}.
\end{equation}
        In particular, if \begin{enumerate}
    \item[(i)] $b_1$ is even and $a_1=1$, and $b_i$ are all odd for $2 \leq i \leq r$,
    \item[(ii)] or, $b_i$ are all even,
\end{enumerate}
then 
\eqref{criterion2} holds. 

 \item Assume $\mathrm{G}_n=\SO_{2n}^{\alpha}$.
 If all $b_i$ are of the same parity, then 
 \begin{equation}\label{criterion3}
    [\underline{p}_1\underline{p}_1(2n^*-1)1]^{\SO_{2n}}=\eta_{\frak{o}_{2n}, \frak{o}_{2n}}([b_1^{a_1} \cdots b_r^{a_r}]).
 \end{equation}
\end{enumerate}
\end{thm}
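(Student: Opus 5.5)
The plan rests on the compatibility between expansion and collapse under transposition from \cite[Theorem 6.3.11]{CM93}. For a partition $\ul{q}$ of type $\frak{g}_n=\frak{sp}_{2n}$ or $\frak{so}_{2n+1}$ we have $(\ul{q}^t)_{\RG_n}=(\ul{q}^{\RG_n})^t$, and transposition is an order-reversing bijection on special partitions.

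For part (1), put $\ul{q}=[\underline{p}_1\underline{p}_1(2n^*)]$. Then $[\underline{p}_1\underline{p}_1(2n^*)]^{\Sp_{2n}}$ is a special symplectic partition. By Definition \ref{BV duality map}, $\eta_{\frak{so}_{2n+1},\frak{sp}_{2n}}([b_1^{a_1}\cdots b_r^{a_r}])=(([b_1^{a_1}\cdots b_r^{a_r}]^-)_{\Sp_{2n}})^t$. Transposing both sides of \eqref{oriequ} turns it into $(\ul{q}^{\Sp_{2n}})^t=([b_1^{a_1}\cdots b_r^{a_r}]^-)_{\Sp_{2n}}$. We then identify $(\ul{q}^{\Sp_{2n}})^t$ with $(\ul{q}^t)_{\Sp_{2n}}$. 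The $C$-expansion of $\ul{q}$ is special, and taking transposes of special $C$-partitions lands in special $C$-partitions, with $(\ul{q}^t)_{\Sp_{2n}}$ special by \cite[Proposition 6.3.7]{CM93}. This is the one place needing care: the theorem in \cite{CM93} is stated for $\ul{q}$ of the right type, which here is true since $\ul{q}$ is symplectic. Indeed, the odd parts of $\underline{p}_1$ occur twice and $2n^*$ is even, so $\ul{q}$ is a $\Sp_{2n}$-partition. This gives the equivalence \eqref{oriequ} $\Leftrightarrow$ \eqref{criterion}.

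Part (2) runs the same way with $\SO_{2n+1}$. First check that $[\underline{p}_1\underline{p}_1(2n^*+1)]$ is orthogonal: its even parts come in pairs and $2n^*+1$ is odd. Then use $\eta=(([\cdots]^+)_{\SO_{2n+1}})^t$ and the same transpose/expansion duality.

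For the special cases, compute $\ul{q}^t$ explicitly. Transposing $[\underline{p}_1\underline{p}_1(2n^*)]$ gives, column by column, the parts $2\lfloor b_i/2\rfloor$ with multiplicity $a_i$. The single part $2n^*$ adds $1$ to the first $2n^*$ parts, and further parts equal to $1$ appear if $2n^*$ exceeds the number of parts. Concretely,
\[
\ul{q}^t=\Big[\prod_i (2\lfloor b_i/2\rfloor)^{a_i}\Big]+[1^{2n^*}],
\]
where the sum adds the partitions row by row.

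When all $b_i$ are odd, $2n^*=\sum a_i-1$ (the total $2n+1$ forces $\sum a_i$ odd). So $\ul{q}^t$ is $[b_1^{a_1}\cdots b_r^{a_r}]$ with one copy of $b_r$ replaced by $b_r-1$, which is exactly $[\cdots]^-$. Both collapses then agree trivially.

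In case (1)(i), the $b_i$ with $i<r$ are even, so $n^*=0$ and $\ul{q}^t=[b_1^{a_1}\cdots b_{r-1}^{a_{r-1}}]$. This equals $[\cdots 1]^-$, again trivially.

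The $\SO_{2n+1}$ cases (2)(i), (2)(ii) are the mirror computations. When all $b_i$ are even, $n^*=0$, and $\ul{q}=[\underline{p}_1\underline{p}_1 1]$ has transpose $[b_1^{a_1}\cdots]$ with $b_1$ increased by one, which is $[\cdots]^+$. For (2)(i), $2n^*+1=\sum_{i\ge2}a_i+1$, so the rows become $b_1+1$ and $b_i$ for $i\ge2$, again giving $[\cdots]^+$.

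For part (3), take all $b_i$ of the same parity. If all are even, then $n^*=0$ and $\ul{q}=[\underline{p}_1\underline{p}_1(-1)1]$ degenerates; I would interpret it as $[\underline{p}_1\underline{p}_1]$, whose transpose is $[b_1^{a_1}\cdots]$ itself. If all are odd, $2n^*=\sum a_i$ and $\ul{q}^t$ is obtained from $[b_1^{a_1}\cdots]$ by subtracting $1$ from the last part and adding the extra row $1$. That is the $\ul{p}^{+-}$-type modification inverse to the one in Ach03, in the form $(\ul{q}^t)$ close to $[\cdots]_{-+}$.

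Then use \cite[Lemma 3.3]{Ach03}, $(\ul{p}^t)_{\SO_{2n}}=((\ul{p}^{+-})_{\Sp_{2n}})^t$, together with the $D$-analogue $\ul{q}^{\SO_{2n}}=((\ul{q}^t)_{\SO_{2n}})^t$. This reduces \eqref{criterion3} to an equality of $D$-collapses of two partitions differing only by moving a box from the last row into a new row of length $1$. Such a move is absorbed by the $D$-collapse because the result is orthogonal. I expect this $\SO_{2n}$ bookkeeping, including the boundary case $n^*=0$, to be the main obstacle. I would handle it by running the collapse recipe \cite[Lemma 6.3.8]{CM93} directly on both partitions.
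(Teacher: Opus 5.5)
Parts (1) and (2), including the four special cases, follow the paper's proof: you transpose, use $(\ul{q}^{\RG_n})^t=(\ul{q}^t)_{\RG_n}$ from the proof of \cite[Theorem 6.3.11]{CM93}, and compute $\ul{q}^t$ row by row. The gap is in part (3) when all $b_i$ are odd. Write $\ul{b}=[b_1^{a_1}\cdots b_r^{a_r}]$ and $\ul{q}=[\ul{p}_1\ul{p}_1(2n^*-1)1]$.

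\textbf{First error: the transpose.} The extra part $1$ of $\ul{q}$ lengthens the first column of its diagram. So it adds $1$ to the \emph{largest} part of $\ul{q}^t$; it does not create a new row. The correct formula is $\ul{q}^t=[(b_1+1)b_1^{a_1-1}\cdots b_r^{a_r-1}(b_r-1)]=\ul{b}^{+-}$. For example, $\ul{b}=[3^2]$ gives $\ul{q}=[2^21^2]$ and $\ul{q}^t=[42]$. Your recipe gives $[321]$ instead, which is not even the transpose of an orthogonal partition.

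\textbf{Second error: the type-$D$ identity.} Your ``$D$-analogue'' $\ul{q}^{\SO_{2n}}=((\ul{q}^t)_{\SO_{2n}})^t$ is false. For instance, $[31]$ is special in $\mathfrak{so}_4$, but $(([31]^t)_{\SO_4})^t=([1^4])^t=[4]$ is not orthogonal. In type $D$, an orthogonal partition is special iff its transpose is symplectic \cite[Proposition 6.3.7]{CM93}. So transposition interacts with the $C$-collapse, not the $D$-collapse.

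\textbf{Consequence.} Your ``absorption'' step therefore has no support, and for the partitions you describe it actually fails: $[321]_{\SO_6}=[31^3]\neq[3^2]=[3^2]_{\SO_6}$. In the all-even case you reach the right answer only by accident. What is needed there is that $\ul{b}^t$ is a special orthogonal partition, so both sides of \eqref{criterion3} equal $\ul{b}^t$.

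\textbf{How to repair it.} A corrected version of your idea is actually shorter than the paper's argument.
\begin{enumerate}
\item For orthogonal $\ul{q}$ one has $\ul{q}^{\SO_{2n}}=((\ul{q}^t)_{\Sp_{2n}})^t$. The right side dominates $\ul{q}$ and has symplectic transpose. It is again orthogonal, by the compatibility recorded in the proof of Lemma \ref{lem:onefactor-soeven-codim}. Finally, any special orthogonal $\ul{e}\ge\ul{q}$ has $\ul{e}^t$ symplectic with $\ul{e}^t\le\ul{q}^t$. Hence $\ul{e}^t\le(\ul{q}^t)_{\Sp_{2n}}$, i.e.\ $\ul{e}\ge((\ul{q}^t)_{\Sp_{2n}})^t$.
\item Combine this with $\ul{q}^t=\ul{b}^{+-}$ and with \cite[Lemma 3.3]{Ach03} in the form $(\ul{b}^t)_{\SO_{2n}}=((\ul{b}^{+-})_{\Sp_{2n}})^t$.
\item Then both sides of \eqref{criterion3} equal $((\ul{b}^{+-})_{\Sp_{2n}})^t$.
\end{enumerate}

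By contrast, the paper writes $\ul{q}=(\ul{b}^t)_{+-}$. It then compares $((\ul{b}^t)_{+-})^{\SO_{2n}}$ with $(\ul{b}^t)_{\SO_{2n}}$ block by block, using the collapse recipe of \cite[Lemma 6.3.8]{CM93}.
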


We remark that the identities \eqref{criterion} and \eqref{criterion2} are relatively easier to check than \eqref{oriequ} and \eqref{oriequ2}. 

\begin{proof}
Proof of Part (1), $\mathrm{G}_n=\Sp_{2n}$.
First note that given any symplectic partition $\ul{p}$, $(\ul{p}^{\Sp_{2n}})^t = (\ul{p}^t)_{\Sp_{2n}}$, see the proof of \cite[Theorem 6.3.11]{CM93}.
Since $[\underline{p}_1\underline{p}_1(2n^*)]$ is clearly a symplectic partition, we have
\MATHBLOCKDOLLARDOLLAR[[\underline{p}_1\underline{p}_1(2n^*)]]{^{\Sp_{2n}}=
(([\underline{p}_1\underline{p}_1(2n^*)]^t)_{\Sp_{2n}})^t.}

On the other hand, 
$$\eta_{\frak{so}_{2n+1}, \frak{sp}_{2n}}([b_1^{a_1} \cdots b_r^{a_r}])=(([b_1^{a_1} \cdots b_r^{a_r}]^t)^-)_{\Sp_{2n}}.$$
Since $\psi$ is an Arthur parameter for symplectic groups, $[b_1^{a_1} \cdots b_r^{a_r}]$ is an orthogonal partition.
Hence, 
$$\eta_{\frak{so}_{2n+1}, \frak{sp}_{2n}}([b_1^{a_1} \cdots b_r^{a_r}])=(([b_1^{a_1} \cdots b_r^{a_r}]^-)_{\Sp_{2n}})^t,$$
by \cite[Lemma 3.3]{Ach03} which says that given any orthogonal partition $\ul{p}$, 
$((\ul{p}^-)_{\Sp_{2n}})^t=((\ul{p}^t)^-)_{\Sp_{2n}}$.

Therefore, 
\MATHBLOCKDOLLARDOLLAR[[\underline{p}_1\underline{p}_1(2n^*)]]{^{\Sp_{2n}}=\eta_{\frak{so}_{2n+1}, \frak{sp}_{2n}}([b_1^{a_1} \cdots b_r^{a_r}])}
if and only if 
$$(([\underline{p}_1\underline{p}_1(2n^*)]^t)_{\Sp_{2n}})^t=(([b_1^{a_1} \cdots b_r^{a_r}]^-)_{\Sp_{2n}})^t.$$
Hence, we prove the equivalence, since taking transpose is a bijection. 

Next, assume first that $a_r=b_r=1$ and $b_i$ are all even for $1 \leq i \leq r-1$. 
By definition, $2n^*=a_1-1=0$. 
Recall that $\underline{p}_1=[\big\lfloor \frac{b_1}{2} \big\rfloor^{a_1}
\big\lfloor \frac{b_2}{2} \big\rfloor^{a_2} \cdots \big\lfloor \frac{b_r}{2} \big\rfloor^{a_r}]^t = a_1^{\big\lfloor \frac{b_1}{2} \big\rfloor} + a_2^{\big\lfloor \frac{b_2}{2} \big\rfloor} + \cdots + a_r^{\big\lfloor \frac{b_r}{2} \big\rfloor}$.
Hence, 
\MATHBLOCKDOLLARDOLLAR[[\underline{p}_1\underline{p}_1(2n^*)]]{= [a_1^{2\big\lfloor \frac{b_1}{2} \big\rfloor} + a_2^{2\big\lfloor \frac{b_2}{2} \big\rfloor} + \cdots + a_r^{2\big\lfloor \frac{b_r}{2} \big\rfloor}]=([b_1^{a_1} \cdots b_r^{a_r}]^t)_-.}
Then, by \cite[Section 3.1]{Ach03}, 
\MATHBLOCKDOLLARDOLLAR[[\underline{p}_1\underline{p}_1(2n^*)]]{= ([b_1^{a_1} \cdots b_r^{a_r}]^t)^{t-t}=[b_1^{a_1} \cdots b_r^{a_r}]^{-t}.}
Therefore, \MATHBLOCKDOLLARDOLLAR[[\underline{p}_1\underline{p}_1(2n^*)]]{^t=[b_1^{a_1} \cdots b_r^{a_r}]^-.}
Hence, \eqref{criterion} holds. 

Now we assume that $b_i$ are all odd. 
Since $\psi$ is an Arthur parameter for symplectic groups, and $b_i$ are all odd, $\sum_{i=1}^r a_i$ should be odd and 
$2n^*= (\sum_{i=1}^r a_i) -1$. Hence, similarly as above,
\MATHBLOCKDOLLARDOLLAR[[\underline{p}_1\underline{p}_1(2n^*)]]{=([b_1^{a_1} \cdots b_r^{a_r}]^t)_-= ([b_1^{a_1} \cdots b_r^{a_r}]^t)^{t-t}=[b_1^{a_1} \cdots b_r^{a_r}]^{-t}.}
Therefore, \MATHBLOCKDOLLARDOLLAR[[\underline{p}_1\underline{p}_1(2n^*)]]{^t=[b_1^{a_1} \cdots b_r^{a_r}]^-.}
Hence, \eqref{criterion} also holds.

Proof of Part (2), $\mathrm{G}_n=\SO_{2n+1}$.
First, as in the proof of Part (1), note that given any orthogonal partition $\ul{p}$, $(\ul{p}^{\SO_{2n+1}})^t = (\ul{p}^t)_{\SO_{2n+1}}$, see the proof of \cite[Theorem 6.3.11]{CM93}.
Since $[\underline{p}_1\underline{p}_1(2n^*+1)]$ is clearly an orthogonal partition, we have
\MATHBLOCKDOLLARDOLLAR[[\underline{p}_1\underline{p}_1(2n^*+1)]]{^{\SO_{2n+1}}=
(([\underline{p}_1\underline{p}_1(2n^*+1)]^t)_{\SO_{2n+1}})^t.}

On the other hand, 
$$\eta_{\frak{sp}_{2n}, \frak{so}_{2n+1}}([b_1^{a_1} \cdots b_r^{a_r}])=(([b_1^{a_1} \cdots b_r^{a_r}]^t)^+)_{\SO_{2n+1}}.$$
Since $\psi$ is an Arthur parameter for odd orthogonal groups, $[b_1^{a_1} \cdots b_r^{a_r}]$ is a symplectic partition.
Hence, 
$$\eta_{\frak{sp}_{2n}, \frak{so}_{2n+1}}([b_1^{a_1} \cdots b_r^{a_r}])=(([b_1^{a_1} \cdots b_r^{a_r}]^+)_{\SO_{2n+1}})^t,$$
by \cite[Lemma 3.3]{Ach03} which says that given any symplectic partition $\ul{p}$, 
$((\ul{p}^+)_{\SO_{2n+1}})^t=((\ul{p}^t)^+)_{\SO_{2n+1}}$.

Therefore, 
\MATHBLOCKDOLLARDOLLAR[[\underline{p}_1\underline{p}_1(2n^*+1)]]{^{\SO_{2n+1}}=\eta_{\frak{sp}_{2n}, \frak{so}_{2n+1}}([b_1^{a_1} \cdots b_r^{a_r}])}
if and only if 
$$(([\underline{p}_1\underline{p}_1(2n^*+1)]^t)_{\SO_{2n+1}})^t=(([b_1^{a_1} \cdots b_r^{a_r}]^+)_{\SO_{2n+1}})^t.$$
Hence, we prove the equivalence. 

Now we assume that $b_1$ is even and $a_1=1$, and $b_i$ are all odd, for $2 \leq i \leq r$. Then $2n^*+1=(\sum_{i=2}^r a_i) +1=\sum_{i=1}^r a_i$. Hence, 
\begin{align*}
    &\,[\underline{p}_1\underline{p}_1(2n^*+1)]^t\\
    = &\,2\left[\big\lfloor \frac{b_1}{2} \big\rfloor^{a_1}
\big\lfloor \frac{b_2}{2} \big\rfloor^{a_2} \cdots \big\lfloor \frac{b_r}{2} \big\rfloor^{a_r}\right]+\left[1^{\sum_{i=1}^r a_i}\right]\\
=&\,\left[b_1\left(\prod_{i=2}^r (b_i-1)^{a_i}\right)\right]+\left[1^{\sum_{i=1}^r a_i}\right]\\
=&\,\left[(b_1+1)\left(\prod_{i=2}^r (b_i)^{a_i}\right)\right],
\end{align*}
which is exactly $[b_1^{a_1} \cdots b_r^{a_r}]^+$.
Therefore, \eqref{criterion2} holds.

Next, we assume that $b_i$ are all even, then we have 
$n^*= 0$. Hence, similarly as above,
\MATHBLOCKDOLLARDOLLAR[[\underline{p}_1\underline{p}_1(2n^*+1)]]{^t=2\left[\big\lfloor \frac{b_1}{2} \big\rfloor^{a_1}
\big\lfloor \frac{b_2}{2} \big\rfloor^{a_2} \cdots \big\lfloor \frac{b_r}{2} \big\rfloor^{a_r}\right]+[1]=[b_1^{a_1} \cdots b_r^{a_r}]+[1],}
which is exactly $[b_1^{a_1} \cdots b_r^{a_r}]^+$. 
Therefore, \eqref{criterion2} also holds.

Proof of Part (3), $\mathrm{G}_n=\SO_{2n}^{\alpha}$.
Without of loss of generality, we assume further that $b_1>b_2>\cdots>b_r$. 
First, we assume that all $b_i$ are even. Since $\psi$ is an Arthur parameter for even orthogonal groups, $[b_1^{a_1} \cdots b_r^{a_r}]$ is an orthogonal partition of $2n$. Hence, all $a_i$'s are also even, and $n^*=0$. In this case, $(2n^*-1)1$ will be omitted. Then
\MATHBLOCKDOLLARDOLLAR[[\underline{p}_1\underline{p}_1(2n^*-1)1]]{=[\underline{p}_1\underline{p}_1]= a_1^{2\big\lfloor \frac{b_1}{2} \big\rfloor} + a_2^{2\big\lfloor \frac{b_2}{2} \big\rfloor} + \cdots + a_r^{2\big\lfloor \frac{b_r}{2} \big\rfloor} = a_1^{b_1} + \cdots + a_r^{b_r},}
which is a special even orthogonal partition. Therefore, \MATHBLOCKDOLLARDOLLAR[[\underline{p}_1\underline{p}_1(2n^*-1)1]]{^{\SO_{2n}}=[\underline{p}_1\underline{p}_1(2n^*-1)1].}

On one hand, 
\MATHBLOCKDOLLARDOLLAR[[\underline{p}_1\underline{p}_1(2n^*-1)1]]{^t=2\left[\big\lfloor \frac{b_1}{2} \big\rfloor^{a_1}
\big\lfloor \frac{b_2}{2} \big\rfloor^{a_2} \cdots \big\lfloor \frac{b_r}{2} \big\rfloor^{a_r}\right]=[b_1^{a_1} \cdots b_r^{a_r}].}
On the other hand, 
\begin{align*}
    &\,(\eta_{\frak{o}_{2n}, \frak{o}_{2n}}([b_1^{a_1} \cdots b_r^{a_r}]))^t\\
    =&\, (([b_1^{a_1} \cdots b_r^{a_r}]^t)_{\SO_{2n}})^t\\
    =&\, ([b_1^{a_1} \cdots b_r^{a_r}]^{+-})_{\Sp_{2n}}\\
    =&\,[b_1^{a_1} \cdots b_r^{a_r}].
\end{align*}
Here we used the fact that by \cite[Lemma 3.3]{Ach03}, given a partition $\ul{p}$ of $2n$, 
if it is an orthogonal partition or its transpose is a symplectic partition, then 
$(\ul{p}^t)_{\SO_{2n}}=((\ul{p}^{+-})_{\Sp_{2n}})^t$. 
Therefore, we have shown that 
\eqref{criterion3} holds.

Next, we assume that all $b_i$ are odd. Since 
$[b_1^{a_1} \cdots b_r^{a_r}]$ is an orthogonal partition of $2n$, $\sum_{i=1}^r a_i$ should be even. Hence, $2n^*=\sum_{i=1}^r a_i$.
Then, we have
\begin{align*}
    &\,[\underline{p}_1\underline{p}_1(2n^*-1)1]^t\\
    = &\,2\left[\big\lfloor \frac{b_1}{2} \big\rfloor^{a_1}
\big\lfloor \frac{b_2}{2} \big\rfloor^{a_2} \cdots \big\lfloor \frac{b_r}{2} \big\rfloor^{a_r}\right]+\left[1^{(\sum_{i=1}^r a_i)-1}\right] + [1]\\
=&\,\left[\prod_{i=1}^r (b_i-1)^{a_i})\right]+\left[1^{(\sum_{i=1}^r a_i)-1}\right] + [1]\\
=&\,\left[(b_1+1)b_1^{a_1-1}\left(\prod_{i=2}^{r-1} (b_i)^{a_i}\right) b_r^{a_r-1}(b_r-1)\right]\\
=&\,[b_1^{a_1} \cdots b_r^{a_r}]^{+-}.
\end{align*}
Hence, 
\begin{align*}
    &\,[\underline{p}_1\underline{p}_1(2n^*-1)1] \\
    =&\, ([b_1^{a_1} \cdots b_r^{a_r}]^{+-})^t\\
    =&\, ([b_1^{a_1} \cdots b_r^{a_r}]^t)^{(t+t)(t-t)}\\
    =&\,([b_1^{a_1} \cdots b_r^{a_r}]^t)_{+-}.
\end{align*}

Rewrite $[b_1^{a_1} \cdots b_r^{a_r}]$ as $[(\prod_{i=1}^s n_i^2)(\prod_{j=1}^l(2m_j+1))]$, where $l$ is even, and $2m_l+1>\cdots>2m_1+1$. We can see that  
$[b_1^{a_1} \cdots b_r^{a_r}]^t$
is a partition of the following form
$$\left[p_l^1 \cdots p_l^{2m_1+1} \left(\prod_{j=1}^{l-1} p_{j}^1\cdots p_{j}^{2m_{l+1-j}-2m_{l-j}}\right)p_0^1 \cdots p_0^{m_0}\right],$$
where $p_l^{i}$ with $1\leq i \leq 2m_1+1$, $p_{2j}^i$ with $1\leq i \leq 2m_{l+1-2j}-2m_{l-2j}$ and $1\leq j \leq \frac{l-2}{2}$, and $p_0^k$ with $1 \leq k \leq m_0$  are all even; and $p_{2j+1}^i$ with $1\leq i \leq 2m_{l-2j}-2m_{l-2j-1}$ and $0\leq j \leq \frac{l-2}{2}$
are all odd; and finally $p_l^1 \geq \cdots \geq p_l^{2m_1+1}>p_{l-1}^1$, $p_j^1 \geq \cdots \geq p_j^{2m_{l+1-j}-2m_{l-j}} > p_{j-1}^1$, $1 \leq j \leq l-1$ with  $p_0^1 \geq \cdots \geq p_0^{m_0}$. Since $b_1 > b_2 > \cdots > b_r$, we observe that $m_0$ is even and $p_l^1$ occurs with odd multiplicity. 
Then, $[\underline{p}_1\underline{p}_1(2n^*-1)1] = ([b_1^{a_1} \cdots b_r^{a_r}]^t)_{+-}$ is the following partition
$$\left[(p_l^1-1)p_l^2 \cdots p_l^{2m_1+1} \left(\prod_{j=1}^{l-1} p_{j}^1\cdots p_{j}^{2m_{l+1-j}-2m_{l-j}}\right)p_0^1 \cdots p_0^{m_0}(1)\right].$$

Following the recipe on carrying out the $\SO_{2n}$-collapse (\cite[Lemma 6.3.8]{CM93}), we obtain that the partition $([b_1^{a_1} \cdots b_r^{a_r}]^t)_{\SO_{2n}}$
is equal to the 
following partition
\begin{align*}
    &\Biggl[(p_l^1 \cdots p_l^{2m_1})_\SO (p_l^{2m_1+1}-1)\left(\prod_{j=0}^{\frac{l-2}{2}} p_{2j+1}^1\cdots p_{2j+1}^{2m_{l-2j}-2m_{l-2j-1}}\right)\\
    &\left(\prod_{j=1}^{\frac{l-2}{2}}(p_{2j}^1+1)(p_{2j}^2\cdots p_{2j}^{2m_{l+1-2j}-2m_{l-2j}-1})_\SO(p_{2j}^{2m_{l+1-2j}-2m_{l-2j}}-1)\right)\\
    &(p_0^1+1) (p_0^2 \cdots p_0^{m_0})_\SO \Biggr],
\end{align*}
and the partition $[\underline{p}_1\underline{p}_1(2n^*-1)1]^{\SO_{2n}} = (([b_1^{a_1} \cdots b_r^{a_r}]^t)_{+-})^{\SO_{2n}}$ is equal to the following partition
\begin{align*}
    &\Biggl[((p_l^1-1)p_l^2 \cdots p_l^{2m_1+1}p_0^1 \cdots p_0^{m_0}(1))^\SO\\
    &\left(\prod_{j=0}^{\frac{l-2}{2}} p_{2j+1}^1\cdots p_{2j+1}^{2m_{l-2j}-2m_{l-2j-1}}\right)\\
    &\left(\prod_{j=1}^{\frac{l-2}{2}}(p_{2j}^1+1)(p_{2j}^2\cdots p_{2j}^{2m_{l+1-2j}-2m_{l-2j}-1})_\SO(p_{2j}^{2m_{l+1-2j}-2m_{l-2j}}-1)\right)\Biggr].
\end{align*}
Since $m_0$ is even and $p_l^1$ occurs with odd multiplicity, 
it is easy to see that as partitions,
\begin{align*}
    &\,((p_l^1-1)p_l^2 \cdots p_l^{2m_1+1}p_0^1 \cdots p_0^{m_0}(1))^\SO \\
    = &\,(p_l^1 \cdots p_l^{2m_1})_\SO (p_l^{2m_1+1}-1)(p_0^1+1) (p_0^2 \cdots p_0^{m_0})_\SO.
\end{align*}
Therefore, we have shown that 
\eqref{criterion3} holds. 

This completes the proof of Theorem \ref{mainthm1}.
\end{proof}

\begin{rmk}\label{mixedpartities}
When $b_i$ are of mixed parities, conditions \eqref{oriequ}, \eqref{oriequ2}, and \eqref{criterion3} may not always hold.  We give some examples as follows. 

When $\mathrm{G}_n=\Sp_{2n}$, if $\ul{p}(\psi)=[3^32^2]$, then 
\MATHBLOCKDOLLARDOLLAR[[\underline{p}_1\underline{p}_1(2n^*)]]{^{\mathrm{G}_n}=\eta_{\hat{\frak{g}}_n, \frak{g}_{n}}([3^32^2])=[5^22];}
if $\ul{p}(\psi)=[2^21^3]$, then 
\MATHBLOCKDOLLARDOLLAR[[\underline{p}_1\underline{p}_1(2n^*)]]{^{\mathrm{G}_n}=[2^3]<\eta_{\hat{\frak{g}}_n, \frak{g}_{n}}([2^21^3])=[42].}

When $\mathrm{G}_n=\SO_{2n+1}$, if
$\ul{p}(\psi)=[3^22^3]$, then
\MATHBLOCKDOLLARDOLLAR[[\underline{p}_1\underline{p}_1(2n^*+1)]]{^{\mathrm{G}_n}=\eta_{\hat{\frak{g}}_n, \frak{g}_{n}}([3^22^3])=[5^23];}
if $\ul{p}(\psi)=[2^31^2]$, then
\MATHBLOCKDOLLARDOLLAR[[\underline{p}_1\underline{p}_1(2n^*+1)]]{^{\mathrm{G}_n}=[3^3]<\eta_{\hat{\frak{g}}_n, \frak{g}_{n}}([2^31^2])=[531].}

When $\mathrm{G}_n=\SO_{2n}^{\alpha}$, if $\ul{p}(\psi)=[3^32^21]$, then 
\MATHBLOCKDOLLARDOLLAR[[\underline{p}_1\underline{p}_1(2n^*-1)1]]{^{\mathrm{G}_n}=\eta_{\hat{\frak{g}}_n, \frak{g}_{n}}([3^32^2])=[5^231];}
if $\ul{p}(\psi)=[32^21^3]$, then 
\MATHBLOCKDOLLARDOLLAR[[\underline{p}_1\underline{p}_1(2n^*-1)1]]{^{\mathrm{G}_n}=[3^31]<\eta_{\hat{\frak{g}}_n, \frak{g}_{n}}([2^21^3])=[631].}
\end{rmk}

In general, we have the following inequality of partitions which is consistent with Conjecture \ref{cubmfclocal}.

\begin{prop}\label{propinequality}
Let $\mathrm{G}_n=\Sp_{2n}, \SO_{2n+1}$, and let $\ul{p}=[\underline{p}_1\underline{p}_1(2n^*)]^{\mathrm{G}_n}$, or $[\underline{p}_1\underline{p}_1(2n^*+1)]^{\mathrm{G}_n}$, then
$$\ul{p}\leq \eta_{\hat{\frak{g}}_n, \frak{g}_{n}}([b_1^{a_1} \cdots b_r^{a_r}]).$$ 
\end{prop}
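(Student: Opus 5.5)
The inequality will be reduced to a comparison of transposes and then to a purely combinatorial statement about $\ul{p}(\psi)=[b_1^{a_1}\cdots b_r^{a_r}]$. Take $\mathrm{G}_n=\Sp_{2n}$ first; the $\SO_{2n+1}$ case is parallel.

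As in the proof of Theorem \ref{mainthm1}(1), $[\underline{p}_1\underline{p}_1(2n^*)]$ is symplectic, so
\[[\underline{p}_1\underline{p}_1(2n^*)]^{\Sp_{2n}}=(([\underline{p}_1\underline{p}_1(2n^*)]^t)_{\Sp_{2n}})^t.\]
By \cite[Lemma 3.3]{Ach03},
\[\eta_{\frak{so}_{2n+1},\frak{sp}_{2n}}([b_1^{a_1}\cdots b_r^{a_r}])=(([b_1^{a_1}\cdots b_r^{a_r}]^-)_{\Sp_{2n}})^t.\]
Transposition reverses the dominance order, so the claim becomes
\begin{equation*}
([b_1^{a_1}\cdots b_r^{a_r}]^-)_{\Sp_{2n}}\leq ([\underline{p}_1\underline{p}_1(2n^*)]^t)_{\Sp_{2n}}.
\end{equation*}
The $\Sp_{2n}$-collapse is order preserving, since it is the largest symplectic partition below a given one. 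It therefore suffices to prove the uncollapsed inequality
\[[b_1^{a_1}\cdots b_r^{a_r}]^-\leq [\underline{p}_1\underline{p}_1(2n^*)]^t.\]

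To prove this, compute the right side explicitly as in the proof of Theorem \ref{mainthm1}. Transposing a union of partitions gives the sum of the transposes, and $\underline{p}_1^t=[\lfloor b_1/2\rfloor^{a_1}\cdots\lfloor b_r/2\rfloor^{a_r}]$. Hence
\[[\underline{p}_1\underline{p}_1(2n^*)]^t=2[\lfloor b_1/2\rfloor^{a_1}\cdots\lfloor b_r/2\rfloor^{a_r}]+[1^{2n^*}].\]
The first summand is $[\prod_i (b_i-\epsilon_i)^{a_i}]$, where $\epsilon_i\in\{0,1\}$ is the parity of $b_i$, with parts ordered compatibly with $b_1\geq\cdots\geq b_r$. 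Adding $[1^{2n^*}]$ adds $1$ to each of the first $2n^*$ parts.

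There are $\sum_{b_i\text{ odd}}a_i=2n^*+1$ odd-part slots that lost $1$, namely all parts coming from odd $b_i$. The partial sums of the right side are at least those of $\ul{p}(\psi)$ minus $1$ as long as the odd slots lie among the first $2n^*$ rows. In general one must compare the first $k$ partial sums. Write $e_k$ for the number of odd $b$'s among the first $k$ parts. The deficit from the halving is $e_k$, and the gain is $\min(k,2n^*)$. Since $e_k\leq\min(k,2n^*+1)$, the partial sum of the right side is at least $\sum_{i\le k}p_i-1$ when $k\geq 2n^*+1$. It equals $\sum_{i\le k}p_i-e_k+k\geq\sum_{i\le k}p_i$ when $k\leq 2n^*$. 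The last part of $[b_1^{a_1}\cdots b_r^{a_r}]^-$ is decreased by $1$, so its partial sums are $\sum_{i\le k}p_i$ for $k<\ell$ (with $\ell$ the length) and $2n$ at the end.

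The main obstacle is the range $2n^*+1\leq k<\ell$. There one must show that the first $k$ parts already contain all $2n^*+1$ odd slots, or else that the count is not short. This holds because the right side's partial sum is $\sum_{i\le k}p_i-e_k+2n^*$, and $e_k\leq 2n^*$ unless every odd slot lies in the first $k$ rows. In that boundary case, with $e_k=2n^*+1$, I would use that the parts of the right side are ordered differently from those of $\ul{p}(\psi)$. Taking the $k$ largest parts of the right side rather than the first $k$ in the given order only increases the partial sum, and it can pick up an even $b_j$ beyond row $k$ when one exists. If no such even part exists, then $k=\ell$, which is the total-size equality. I expect the parity bookkeeping here, especially when odd and even $b_i$ interleave, to be the delicate point. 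I would handle it by a case split on whether row $k+1$ carries an even $b_j$.

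For $\mathrm{G}_n=\SO_{2n+1}$ the same reduction applies using Theorem \ref{mainthm1}(2), with $^+$ and the $\SO_{2n+1}$-collapse in place of $^-$ and the $\Sp_{2n}$-collapse. The target becomes
\[[b_1^{a_1}\cdots b_r^{a_r}]^+\leq 2[\lfloor b_i/2\rfloor^{a_i}]+[1^{2n^*+1}].\]
Here $\sum_{b_i\text{ odd}}a_i=2n^*$, so the added ones exactly compensate the odd slots, with one extra unit matching the $+1$ on the first part. The same partial-sum comparison applies.
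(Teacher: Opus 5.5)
Your reduction (transpose, monotonicity of the collapse, then an uncollapsed dominance comparison) is the same as the paper's. The gap is that the uncollapsed inequalities you aim for are false in general, so the collapse must be used in an essential way.

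\textbf{The $\SO_{2n+1}$ case.} If $b_1$ is odd, the largest part of $2[\lfloor b_i/2\rfloor^{a_i}]+[1^{2n^*+1}]$ is $(b_1-1)+1=b_1$. But $[b_1^{a_1}\cdots b_r^{a_r}]^+$ begins with $b_1+1$. For $\ul{p}(\psi)=[3^2]$ one gets $[4,3]\not\leq[3,3,1]$. Your ``extra unit matching the $+1$ on the first part'' works only when $b_1$ is even. The paper splits on the parity of $b_1$. For $b_1$ odd it first computes the $\SO_{2n+1}$-collapse. This slides the added box down to the largest even part $b_j$, or creates a new part $1$ if all $b_i$ are odd:
$([b_1^{a_1}\cdots b_r^{a_r}]^+)_{\SO_{2n+1}}=[b_1^{a_1}\cdots b_{j-1}^{a_{j-1}}(b_j+1)b_j^{a_j-1}\cdots b_r^{a_r}]_{\SO_{2n+1}}$.
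The paper then compares the right side with this intermediate partition. Against it your partial-sum count does go through, with equality on the rows above $b_j$.

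\textbf{The $\Sp_{2n}$ case.} The boundary situation you isolated is a genuine failure, not bookkeeping. The partition $2[\lfloor b_i/2\rfloor^{a_i}]+[1^{2n^*}]$ is already non-increasing in the order you wrote it, so passing to its $k$ largest parts changes nothing. Whenever the smallest part $b_r$ is even, all $2n^*+1$ odd rows lie among the first $\ell-1$ rows, and the $(\ell-1)$-st partial sum falls short by exactly $1$. For example, $\ul{p}(\psi)=[3,2^2]$ (so $n=3$, $n^*=0$) gives $[3,2,1]\not\leq[2,2,2]$, although both have $\Sp_6$-collapse $[2,2,2]$. The paper's own argument in this case asserts exactly this uncollapsed inequality (``This is clear''), so it has the same hole.

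One way to close it is as follows. Let $Z$ be $\ul{p}(\psi)$ with its last odd row lowered by $1$. Suppose $\mu$ is symplectic and $\mu\leq[b_1^{a_1}\cdots b_r^{a_r}]^-$; we claim $\mu\leq Z$.
\begin{enumerate}
\item The two bounds differ only for $k$ from the last odd row to $\ell-1$. For such $k$, $S_k=\sum_{i\leq k}p_i$ is odd, since it contains all $2n^*+1$ odd parts.
\item If $\mu_1+\cdots+\mu_k=S_k$, then since odd parts of $\mu$ occur in pairs, there must be an odd value $\mu_k=\mu_{k+1}$ straddling row $k$.
\item The dominance $\mu\leq[b_1^{a_1}\cdots b_r^{a_r}]^-$ at $k-1$ and $k+1$ gives $p_k\leq\mu_k$ and $\mu_{k+1}\leq p_{k+1}$. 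This forces $\mu_k=p_{k+1}$, which is even, a contradiction.
\end{enumerate}
Your own count shows $Z\leq[\ul{p}_1\ul{p}_1(2n^*)]^t$, with partial sums coinciding from the last odd row on. Hence $([b_1^{a_1}\cdots b_r^{a_r}]^-)_{\Sp_{2n}}\leq Z\leq[\ul{p}_1\ul{p}_1(2n^*)]^t$. Since the left side is symplectic, it follows that $([b_1^{a_1}\cdots b_r^{a_r}]^-)_{\Sp_{2n}}\leq([\ul{p}_1\ul{p}_1(2n^*)]^t)_{\Sp_{2n}}$, as required.
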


\begin{proof}
Recall that 
$n^{*}=\big\lfloor\frac{\sum_{b_i \text{ odd }} a_i}{2}\big\rfloor,$
and 
$$\underline{p}_1=\left[\big\lfloor \frac{b_1}{2} \big\rfloor^{a_1}
\big\lfloor \frac{b_2}{2} \big\rfloor^{a_2} \cdots \big\lfloor \frac{b_r}{2} \big\rfloor^{a_r}\right]^t = a_1^{\big\lfloor \frac{b_1}{2} \big\rfloor} + a_2^{\big\lfloor \frac{b_2}{2} \big\rfloor} + \cdots + a_r^{\big\lfloor \frac{b_r}{2} \big\rfloor},$$
with $b_1 \geq b_2 \geq \cdots \geq b_r$. 

When $\mathrm{G}_n=\Sp_{2n}$, we have that $2n^{*}=(\sum_{b_i \text{ odd }} a_i)-1$, 
\MATHBLOCKDOLLARDOLLAR[[\underline{p}_1\underline{p}_1(2n^*)]]{^{\Sp_{2n}}=
(([\underline{p}_1\underline{p}_1(2n^*)]^t)_{\Sp_{2n}})^t,}
and 
$$\eta_{\frak{so}_{2n+1}, \frak{sp}_{2n}}([b_1^{a_1} \cdots b_r^{a_r}])=(([b_1^{a_1} \cdots b_r^{a_r}]^-)_{\Sp_{2n}})^t.$$
Hence, it suffices to show that 
$$([\underline{p}_1\underline{p}_1(2n^*)]^t)_{\Sp_{2n}} \geq ([b_1^{a_1} \cdots b_r^{a_r}]^-)_{\Sp_{2n}},$$
and 
suffices to show that 
\MATHBLOCKDOLLARDOLLAR[[\underline{p}_1\underline{p}_1(2n^*)]]{^t \geq [b_1^{a_1} \cdots b_r^{a_r}]^-.}
This is clear since 
\MATHBLOCKDOLLARDOLLAR[[\underline{p}_1\underline{p}_1(2n^*)]]{^t= 2\left[\big\lfloor \frac{b_1}{2} \big\rfloor^{a_1}
\big\lfloor \frac{b_2}{2} \big\rfloor^{a_2} \cdots \big\lfloor \frac{b_r}{2} \big\rfloor^{a_r}\right]+\left[1^{(\sum_{b_i \text{ odd }} a_i)-1}\right],}
and 
\MATHBLOCKDOLLARDOLLAR[[b_1^{a_1} \cdots b_r^{a_r}]]{^-=[b_1^{a_1} \cdots b_r^{a_r-1}(b_r-1)].}

When $\mathrm{G}_n=\SO_{2n+1}$, we have that $2n^{*}=\sum_{b_i \text{ odd }} a_i$,
\MATHBLOCKDOLLARDOLLAR[[\underline{p}_1\underline{p}_1(2n^*+1)]]{^{\SO_{2n+1}}=
(([\underline{p}_1\underline{p}_1(2n^*+1)]^t)_{\SO_{2n+1}})^t,}
and 
$$\eta_{\frak{sp}_{2n},\frak{so}_{2n+1}}([b_1^{a_1} \cdots b_r^{a_r}])=(([b_1^{a_1} \cdots b_r^{a_r}]^+)_{\SO_{2n+1}})^t.$$
Hence, it suffices to show that 
\begin{equation}\label{sec5equ100}
([\underline{p}_1\underline{p}_1(2n^*+1)]^t)_{\SO_{2n+1}} \geq ([b_1^{a_1} \cdots b_r^{a_r}]^+)_{\SO_{2n+1}}.
\end{equation}
Note that 
\MATHBLOCKDOLLARDOLLAR[[\underline{p}_1\underline{p}_1(2n^*+1)]]{^t= 2\left[\big\lfloor \frac{b_1}{2} \big\rfloor^{a_1}
\big\lfloor \frac{b_2}{2} \big\rfloor^{a_2} \cdots \big\lfloor \frac{b_r}{2} \big\rfloor^{a_r}\right]+\left[1^{(\sum_{b_i \text{ odd }} a_i)+1}\right],}
and 
\MATHBLOCKDOLLARDOLLAR[[b_1^{a_1} \cdots b_r^{a_r}]]{^+=[(b_1+1)b_1^{a_1-1} \cdots b_r^{a_r}].}
When $b_1$ is even, it is easy to see that 
\MATHBLOCKDOLLARDOLLAR[[\underline{p}_1\underline{p}_1(2n^*+1)]]{^t \geq [b_1^{a_1} \cdots b_r^{a_r}]^+.}
Hence, \eqref{sec5equ100} holds. 
When $b_1$ is odd, by the recipe \cite[Lemma 6.3.8]{CM93} of taking the $\SO_{2n+1}$-collapse, we have that 
\begin{align*}
    &\,(([b_1^{a_1} \cdots b_r^{a_r}]^+)_{\SO_{2n+1}}\\
    =&\,([(b_1+1)b_1^{a_1-1} \cdots b_r^{a_r}])_{\SO_{2n+1}}\\
    =&\,[b_1^{a_1} \cdots b_{j-1}^{a_{j-1}} (b_j+1)b_j^{a_j-1} \cdots b_r^{a_r}]_{\SO_{2n+1}},
\end{align*}
where $b_j$ is the largest even part. Then it is easy to see that 
\MATHBLOCKDOLLARDOLLAR[[\underline{p}_1\underline{p}_1(2n^*+1)]]{^t \geq [b_1^{a_1} \cdots b_{j-1}^{a_{j-1}} (b_j+1)b_j^{a_j-1} \cdots b_r^{a_r}].}
Hence, \eqref{sec5equ100} also holds. 
\end{proof}

\begin{rmk}
When $\mathrm{G}_n=\SO_{2n}^{\alpha}$, we also expect that 
\MATHBLOCKDOLLARDOLLAR[[\underline{p}_1\underline{p}_1(2n^*-1)1]]{^{\SO_{2n}}\leq
\eta_{\frak{so}_{2n}, \frak{so}_{2n}}([b_1^{a_1} \cdots b_r^{a_r}])=([b_1^{a_1} \cdots b_r^{a_r}]^t)_{\SO_{2n}}.}
Due to the complexity of this case, we don't discuss it here. It is an interesting question to completely describe the conditions on $a_i, b_i$ such that the equalities \eqref{criterion}, \eqref{criterion2}, and  \eqref{criterion3} hold. 
\end{rmk}


\section{Proof of Theorems \ref{mainintro}}\label{vanandmain}

In this section, we prove Theorems \ref{mainintro}. We follow closely the notation in Section 4. 
First we recall the identity
\eqref{sec6equ2all}. 
\begin{align}\label{sec6equ2_2sp2n}
\begin{split}
    &\,  \sum_{\CO \in \CN_{\frak{g}_{n}}}
c_{\CO}(\wt{\Pi}_{\psi})\hat{\mu}_\CO(f) \\
= &\,  \left(\sum {_{\CO \in \CN_{\frak{g_1'}^{\theta}}}
} c_{\CO}(\wt{\pi}_{\psi^1})\hat{\mu}_\CO({{}\wt{f}^1_{\theta_{\widehat{\mathrm{G}}_1'}}})\right) \left(\sum {_{\CO \in \CN_{\frak{g_2'}^{\theta}}}
} c_{\CO}(\wt{\pi}_{\psi^2})\hat{\mu}_\CO({{}\wt{f}^2_{\theta_{\widehat{\mathrm{G}}_2'}}})\right),
\end{split}
\end{align}
where $f \in \wt{\CH}(G_n)$, transferring to $f'=f'^1 \otimes f'^2 \in \wt{\CS}(G')$, $f'^i \in \wt{\CS}(G'_i)$, $i=1,2$. Let 
$\wt{f}^1 \in 
\wt{\CH}(N_1)$,
and $\wt{f}^2 \in \wt{\CH}(N_2)$, transferring to $f'^1$ and $f'^2$, via surjective maps in \cite[Corollary 2.1.2]{Art13}
$$\iota_{G'_i}: \wt{\CH}(N_i) \rightarrow \wt{\CS}(G'_i), i=1,2,$$
respectively. Here, $\psi^1=\sum_{i\in I}\psi_i$, $\psi^2 = \sum_{j \in J}\psi_j$, with $I=\{i|b_i \text{ odd}\}$, $J=\{i|b_i \text{ even}\}$.
$\theta_{\widehat{\mathrm{G}}_i'}$ is defined in \eqref{thetaGn}, and $\wt{f}_{\theta_{\widehat{\mathrm{G}}_i'}}^i$ is the Harish-Chandra descent of $\widetilde{f}^i$ (see for example \cite[Section 3.1]{Kon02}), $i=1,2$.
$c_{\CO}(\wt{\Pi}_{\psi})=\sum_{\pi \in \wt\Pi_{\psi}} 
c_{\CO}(\pi)$.

Next, we recall several lemmas from \cite{Sha90} and \cite{Kon02} as follows. Let $\Delta$ be the transfer factor defined by Langlands-Shelstad (\cite{LS87}), or by Kottwitz-Shelstad (\cite{KS99}). 

\begin{lem}{\cite[Lemma 9.8]{Sha90}}\label{liorbits}
Let $\CO_0$ be a nilpotent orbit in $\CN_{\frak{g}_n}$. Then there exists a function $f \in \mathcal{H}(G_n)$ with arbitrarily small support such that 
$$\hat{\mu}_{\CO_0}(f)=1, \text{ and } \hat{\mu}_{\CO}(f)=0,$$
for any other $\CO \in \CN_{\frak{g}_n}$.
\end{lem}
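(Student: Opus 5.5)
This is the standard ``separation of nilpotent orbital integrals'' statement (Lemma 9.8 of \cite{Sha90}). I would prove it by reducing to the Lie algebra through the exponential map and then separating the finitely many $F$-rational nilpotent orbits by their Fourier transforms. The point is that the distributions $\hat{\mu}_\CO$, as $\CO$ ranges over $\CN_{\frak{g}_n}$, are linearly independent. Moreover, their germs at $0$ stay linearly independent when restricted to test functions supported in an arbitrarily small neighborhood of $0$, because each $\hat{\mu}_\CO$ is homogeneous.

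\textbf{Step 1 (reduction to $\frak{g}_n$).} Fix a small open compact lattice neighborhood $\omega$ of $0$ in $\frak{g}_n$ on which $\exp$ is a homeomorphism onto an open compact subgroup of $G_n$, as in the Harish-Chandra--Howe setup. Functions $f\in\mathcal{H}(G_n)$ supported in $\exp(\omega)$ then correspond bijectively to $\phi=f\circ\exp\in C_c^\infty(\omega)$, and $\hat{\mu}_\CO(f)$ means $\hat{\mu}_\CO(\phi)=\mu_\CO(\hat\phi)$. So it suffices to find $\phi\in C_c^\infty(\omega)$ with $\hat{\mu}_{\CO_0}(\phi)=1$ and $\hat{\mu}_\CO(\phi)=0$ for all $\CO\neq\CO_0$.

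\textbf{Step 2 (linear independence of the $\hat{\mu}_\CO$).} Since $\CN_{\frak{g}_n}$ is finite, it is enough to show that the finitely many linear functionals $\phi\mapsto\hat{\mu}_\CO(\phi)$ on $C_c^\infty(\omega)$ are linearly independent. Then elementary linear algebra (a finite set of independent functionals admits a dual family of vectors) produces the required $\phi$. To get independence, note that $\hat{\mu}_\CO(\phi)=\mu_\CO(\hat\phi)$. As $\phi$ ranges over $C_c^\infty(\omega)$, $\hat\phi$ ranges over all functions in $C_c^\infty(\frak{g}_n)$ that are invariant under translation by the dual lattice $\omega^\ast$. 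Suppose $\sum_\CO c_\CO\mu_\CO$ vanishes on all such functions. Applying the scaling $X\mapsto t^2X$ and using homogeneity $\mu_\CO(\phi_t)=|t|^{-\dim\CO}\mu_\CO(\phi)$, where $\phi_t(X)=\phi(t^{-2}X)$, I would shrink supports. Translation-invariance under $\omega^\ast$ only matters at scales larger than $\omega^\ast$, and dilating $\omega$ by $t^{-2}$ enlarges the class of admissible $\hat\phi$. In the limit the combination vanishes on all of $C_c^\infty(\frak{g}_n)$.

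\textbf{Step 3 (finishing independence).} It remains to show that $\sum c_\CO\mu_\CO=0$ on $C_c^\infty(\frak{g}_n)$ forces all $c_\CO=0$. Take $\CO$ maximal with $c_\CO\neq0$ and choose $\phi$ supported in a small neighborhood of a point of $\CO$ that meets no other orbit with $c\neq0$ except in lower-dimensional orbits. By the closure-order structure and local closedness of orbits, such a neighborhood can be chosen to avoid the others, and taking $\phi\ge 0$ gives $\mu_\CO(\phi)>0$, a contradiction.

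I expect the main obstacle to be Step 2, namely making the homogeneity/scaling argument precise so that independence survives the restriction to arbitrarily small support. The quantitative handling of the dual lattice is where the care is needed; I would follow the corresponding argument in \cite[Section 9]{Sha90} and \cite{HC78}.
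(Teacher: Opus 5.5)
The paper gives no argument of its own for this lemma; it only cites \cite[Lemma 9.8]{Sha90}. Your architecture is the standard one for this fact: transport to $\mathfrak{g}_n$ via $\exp$, show that the finitely many functionals $\phi\mapsto\hat{\mu}_\CO(\phi)$ stay linearly independent on $C_c^\infty(\omega)$ for every small neighborhood $\omega$ of $0$, then take a dual vector. Step 1 and the final linear-algebra step are fine.

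There is a gap in Step 2, which is where all the content lies. Rescaling does not carry the relation $\sum_\CO c_\CO\mu_\CO=0$ from one class of test functions to a larger one. With your $\phi_t(X)=\phi(t^{-2}X)$ one has $\mu_\CO(\phi_t)=|t|^{\dim\CO}\mu_\CO(\phi)$; note the exponent is $+\dim\CO$, not $-\dim\CO$. So a rescaled test function yields $\sum_\CO c_\CO|t|^{\dim\CO}\mu_\CO(\phi)$, whose weights differ from orbit to orbit. ``Enlarging the class of admissible $\hat\phi$ and passing to the limit'' therefore proves nothing unless all orbits with $c_\CO\neq 0$ have the same dimension.

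The missing idea is to split the relation by homogeneity degree first. This is easiest on the $\hat\mu$ side, with no dual lattices. Suppose $\sum_\CO c_\CO\hat\mu_\CO$ vanishes on $C_c^\infty(\omega)$, and let $\phi\in C_c^\infty(\mathfrak{g}_n)$ be arbitrary. Then $\phi_t\in C_c^\infty(\omega)$ for all small $|t|$, and $\hat\mu_\CO(\phi_t)=|t|^{2\dim\mathfrak{g}_n-\dim\CO}\hat\mu_\CO(\phi)$ (the same scaling formula used in the proof of Theorem \ref{mainintropart1}). Since $|t|^{-1}$ runs through infinitely many values $q^k$, the resulting polynomial identity in $|t|^{-1}$ forces $\sum_{\dim\CO=d}c_\CO\hat\mu_\CO=0$ on all of $C_c^\infty(\mathfrak{g}_n)$ for each $d$. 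Fourier inversion then gives $\sum_{\dim\CO=d}c_\CO\mu_\CO=0$.

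After this split, Step 3 is only needed for orbits of a single dimension. Distinct rational orbits of equal dimension never lie in each other's closures, so a nonnegative bump near a point of $\CO$ that misses all the others gives $c_\CO=0$. As you phrased Step 3, the neighborhood is allowed to meet lower-dimensional orbits with $c\neq0$. That would not suffice, since $\mu_{\CO'}(\phi)$ need not vanish for those orbits. For a maximal $\CO$ they can in fact be avoided, so you should say so.
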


We may assume that $f \in \wt{\mathcal{H}}(G_n)$, $f'=f'^1\otimes f'^2 \in \wt{\CS}(G')$, $\wt{f}^1$, $\wt{f}^2$, $\wt{f}^1_{\theta_{\widehat{\mathrm{G}}_1'}}$, and $\wt{f}^2_{\theta_{\widehat{\mathrm{G}}_2'}}$ all have sufficiently small support since they are all related by either transfer or descent.
For $f \in \wt{\mathcal{H}}(G_n)$ and $t \in F^{\times}$ with $|t|$ small, we define $f_t$ by $f(exp(t^{-1} (\cdot)))$.
Similarly, we can define $f'^i_t$,
$\wt{f}^i_t$ and $\wt{f}^i_{\theta_{\widehat{\mathrm{G}}_i'}, t}$, $i=1,2$. 

\begin{lem}{\cite[Lemma 9.7]{Sha90}}\label{transfer}
Suppose $f \in \wt{\mathcal{H}}(G_n)$ has sufficiently small support near $1$. Then for $t \in F^{\times}$ with $|t|$ small, $f_{t^2}$ and $$|t|^{\dim(\frak{g}_n)-\dim(\frak{g'})}(f^{G'})_{t^2}$$ have $\Delta$-matching orbital integrals.
\end{lem}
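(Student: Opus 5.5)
The plan is to reduce the statement to the Lie algebra setting. The key points are the homogeneity of the local Fourier transforms of nilpotent orbital integrals, and the fact that transfer of functions concentrated near the identity is governed, to first order, by transfer on Lie algebras. I would take $f$ supported in a small neighborhood of $1$ where the exponential map is a homeomorphism. I would also fix the Haar measures on $\frak{g}_n$ and $\frak{g}'$ compatibly with the measures on $G_n$ and $G'$ via $\exp$. Then I would compare the stable orbital integrals of $f$ at regular semisimple elements $\exp(X)$ with those of $f^{G'}$ at matching elements $\exp(Y)$, where $Y \in \frak{g}'$ corresponds to $X$ under the admissible isomorphism of tori.

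First, I would write down the effect of the rescaling $f \mapsto f_{t^2}$ on an orbital integral. For $X$ regular semisimple, the substitution $\exp(t^{-2}(\cdot))$ gives
\[
O_{\exp(X)}(f_{t^2}) = O_{\exp(t^{-2}X)}(f)\cdot c(t),
\]
where $c(t)$ comes from the change of variables on $G_n/T$. Normalizing the orbital integrals by $|D(X)|^{1/2}$, the Weyl discriminant scales as $|D(t^{-2}X)|^{1/2} = |t|^{-(\dim \frak{g}_n - \mathrm{rank})}|D(X)|^{1/2}$. The same computation on $G'$ produces the factor $|t|^{-(\dim\frak{g}'-\mathrm{rank})}$. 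The ranks of $G_n$ and its endoscopic group $G'$ coincide, so the ratio of the two scaling factors is exactly $|t|^{\dim(\frak{g}_n)-\dim(\frak{g}')}$. This is the factor in the statement of Lemma \ref{transfer}.

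Second, I would check that the transfer factor is compatible with the scaling. Near the identity, by the results of Langlands–Shelstad and Kottwitz–Shelstad on descent of transfer factors, $\Delta(\exp(Y),\exp(X))$ agrees with the Lie algebra transfer factor $\Delta_{\mathrm{Lie}}(Y,X)$, up to a constant that does not depend on the small elements. The Lie algebra transfer factor is invariant under the scaling by $t^2 \in F^{\times 2}$. I use the square here rather than $t$ precisely so that the quadratic-character terms (the $\Delta_I$ and $\Delta_{III}$ pieces and the sign characters) do not change. The discriminant term $\Delta_{IV}$ is exactly the ratio of the $|D|^{1/2}$ factors already taken into account above. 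Putting these together, if $f$ and $f^{G'}$ have $\Delta$-matching orbital integrals, then $f_{t^2}$ and $|t|^{\dim\frak{g}_n-\dim\frak{g}'}(f^{G'})_{t^2}$ also match at all regular semisimple elements near $1$. For $|t|$ small, the supports stay inside the neighborhood where the exponential is valid, so matching at these elements is all that is needed.

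The main obstacle is the second step. One has to verify carefully that the transfer factor is genuinely invariant under $X\mapsto t^{-2}X$ and does not pick up extra characters. This is also where the choice of $t^2$ becomes essential. If this bookkeeping fails in the twisted setting, I would fall back on Shahidi's original argument in \cite[Section 9]{Sha90}, which treats exactly this scaling through the germ expansions.
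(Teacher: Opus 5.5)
Your outline is correct and is the standard scaling argument behind \cite[Lemma 9.7]{Sha90}, which the paper cites without reproducing. Since $O_{\exp X}(f_{t^2})=O_{\exp(t^{-2}X)}(f)$ exactly, your $c(t)$ is $1$; the factor $|t|^{\dim(\mathfrak{g}_n)-\dim(\mathfrak{g}')}$ then comes entirely from $\Delta_{IV}$, and the only $t$-dependence that using $t^2$ removes is $\prod_\alpha \chi_\alpha(t^{-2})$ in $\Delta_{II}$. That product is $1$ because asymmetric orbits pair off via $\chi_{-\alpha}=\chi_\alpha^{-1}$ and symmetric $\chi_\alpha$ restrict quadratically to $F_{\pm\alpha}^{\times}$; $\Delta_I$ and $\Delta_{III}$ are simply unchanged for small support, and the lemma concerns ordinary, not twisted, endoscopy.
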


\begin{lem}{\cite[Lemma 8.5]{Kon02}}\label{transfer2}
Suppose $\wt{f} \in \wt{\mathcal{H}}(N)$ has sufficiently small support near $1$ and transfers to $f^{G_n}$ for some $f \in \wt{\mathcal{H}}(G_n)$. Then for $t \in F^{\times}$ with $|t|$ small, ${f}^{G_n}_{t^2}$ and $|t|^{\dim(\frak{g}_n)-\dim(\frak{g}_n^{\theta})}({{}\wt{f}_{\theta_{\widehat{\mathrm{G}}_n}}})_{t^2}$ have $\Delta$-twisted matching orbital integrals.
\end{lem}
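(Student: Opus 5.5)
The plan is to reduce both sides to the Lie algebra. There the statement becomes a homogeneity property of (twisted) orbital integrals under scaling, and the exponent $\dim(\frak{g}_n)-\dim(\frak{g}_n^{\theta})$ comes out of the different degrees of homogeneity of the Weyl discriminants on the two sides. This is the twisted analogue of the argument behind Lemma \ref{transfer} (\cite[Lemma 9.7]{Sha90}), and I will follow that argument step by step.

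\textbf{Step 1: pass to the Lie algebras.} For $\wt{f}$ supported in a small neighbourhood of $\theta_{\widehat{\mathrm{G}}_n}$, I use the Harish-Chandra descent $\wt{f}_{\theta_{\widehat{\mathrm{G}}_n}}$ (as in \cite[Section 3.1]{Kon02}). This writes the twisted orbital integral of $\wt{f}$ at an element $\exp(Y)\rtimes\wt{\theta}(N)$, with $Y\in\frak{g}_n^{\theta}$ regular semisimple and small, as a constant multiple of the ordinary orbital integral of $\wt{f}_{\theta_{\widehat{\mathrm{G}}_n}}\circ\exp$ at $Y$. The constant comes from Jacobians that are $1$ near $0$. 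Similarly, for $f$ near $1$, the orbital integrals of $f^{G_n}$ are those of $f\circ\exp$ on $\frak{g}_n$.

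Near the identity, the twisted norm correspondence between stable classes in $G_n$ and twisted classes in $\wt{\GL}_N(F)$ becomes a linear correspondence $X\leftrightarrow Y$ between regular semisimple elements of $\frak{g}_n$ and of $\frak{g}_n^{\theta}$. This holds because both Lie algebras have Cartan subalgebras of the same rank, identified through the dual data. Moreover the correspondence commutes with scalar multiplication, $tX\leftrightarrow tY$.

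\textbf{Step 2: homogeneity of the transfer factor.} I need that $\Delta$, restricted to this neighbourhood, is essentially invariant under $X\mapsto t^2X$. The factors $\Delta_I,\Delta_{II},\Delta_{III}$ are locally constant, and their behaviour under scaling by a square $t^2$ is trivial: the relevant characters are evaluated on squares. The factor $\Delta_{IV}$ is the ratio of discriminants. Since we work with normalized orbital integrals, I absorb $\Delta_{IV}$ into the normalization.

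\textbf{Step 3: the scaling computation.} Let $\Phi\in C_c^\infty(\frak{h})$ and let $J_{\frak{h}}(X,\Phi)=|D_{\frak{h}}(X)|^{1/2}\int\Phi(\mathrm{Ad}(g)X)\,dg$ be the normalized orbital integral. A change of variables gives
$$J_{\frak{h}}(X,\Phi(\lambda^{-1}\cdot))=|\lambda|^{(\dim\frak{h}-\mathrm{rk}\,\frak{h})/2}J_{\frak{h}}(\lambda^{-1}X,\Phi).$$
Here the discriminant is homogeneous of degree $\dim\frak{h}-\mathrm{rk}\,\frak{h}$ and the orbit measure is scale-invariant. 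I apply this with $\lambda=t^2$ to $\frak{h}=\frak{g}_n$ and to $\frak{h}=\frak{g}_n^{\theta}$. The ranks agree, so comparing the two sides gives the factor
$$|t^2|^{(\dim\frak{g}_n-\dim\frak{g}_n^{\theta})/2}=|t|^{\dim\frak{g}_n-\dim\frak{g}_n^{\theta}}.$$
This is exactly the factor needed so that ${f}^{G_n}_{t^2}$ matches $|t|^{\dim(\frak{g}_n)-\dim(\frak{g}_n^{\theta})}(\wt{f}_{\theta_{\widehat{\mathrm{G}}_n}})_{t^2}$, given that the unscaled functions match.

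\textbf{Main obstacle.} The main obstacle is Step 2 together with the precise form of the descent in Step 1. I must check that the twisted transfer factor for $(\wt{\GL}_N,G_n)$ near $\theta_{\widehat{\mathrm{G}}_n}$ descends to a transfer factor between $\frak{g}_n$ and $\frak{g}_n^{\theta}$ that is invariant under scaling by squares, up to the discriminant normalization. I would take this from Kottwitz--Shelstad \cite{KS99} and the descent analysis in \cite{Kon02}, in parallel with the untwisted case treated in \cite{Sha90}. I would also need a small enough support so that $\exp$ and the descent are bijective on the relevant neighbourhoods; this is guaranteed by shrinking the support and taking $|t|$ small.
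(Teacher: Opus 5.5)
Your proposal is correct and follows essentially the paper's approach. The paper simply invokes the proof of \cite[Lemma 8.5]{Kon02}, which is this same scaling argument: descend to the Lie algebra, use that the transfer factor without $\Delta_{IV}$ is invariant under scaling by squares, and use the homogeneity of the discriminants. Your Step 3 also makes explicit what the paper only asserts when it extends Konno's equal-dimension case to $\mathrm{G}_n=\SO_{2n}^{\alpha}$: the factor $|t|^{\dim(\frak{g}_n)-\dim(\frak{g}_n^{\theta})}$ comes from the two discriminants having degrees $\dim-\mathrm{rk}$ with equal ranks.
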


The above statement in Lemma \ref{transfer2} is directly implied by the same proof of \cite[Lemma 8.5]{Kon02}, while \cite[Lemma 8.5]{Kon02} considered the setting of $\dim(\frak{g}_n)=\dim(\frak{g}_n^{\theta})$.






Now, we are ready to prove Theorem \ref{mainintro}. First, we prove \ref{mainintro}(1).

\begin{thm}\label{mainintropart1}
Let $\psi$ be a local Arthur parameter as in \eqref{lap}. 
Then, Conjecture \ref{wfsbitorsor} is true if and only if for any 
$\pi\in\wt{\Pi}_{\psi}$,
$\ul{p}\in\frak{p}^m(\pi)$, $$\dim_{{\frak{g}}_n} (\ul{p}) \leq \dim_{{\frak{g}}_n} (\eta_{{\hat{\frak{g}}_n,\frak{g}_n}}(\ul{p}(\psi))).$$
\end{thm}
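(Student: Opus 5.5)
The plan is to run the matching method of \cite[Section 9]{Sha90} on the identity \eqref{sec6equ2_2sp2n}, using homogeneity of the Fourier transforms of nilpotent orbital integrals under the scaling $f\mapsto f_{t^2}$. Recall that for $\CO$ in a Lie algebra $\frak{h}$, $\hat{\mu}_{\CO}(f_{t^2}) = |t|^{-\dim \frak{h}-\dim\CO}\hat{\mu}_\CO(f)$ up to the normalization of the dilation. First replace $f$ by $f_{t^2}$ in \eqref{sec6equ2_2sp2n}. By Lemma \ref{transfer}, the transfer of $f_{t^2}$ to $G'=G'_1\times G'_2$ is $|t|^{\dim\frak{g}_n-\dim\frak{g}'}(f^{G'})_{t^2}$. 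By Lemma \ref{transfer2}, applied to each factor with $N_i$ and $\frak{g_i'}^\theta$, the corresponding descents are scaled by $|t|^{\dim\frak{g}'_i-\dim\frak{g_i'}^{\theta}}$. The total power of $|t|$ on the right is therefore $\dim\frak{g}_n-\dim\frak{g_1'}^\theta-\dim\frak{g_2'}^\theta$. Comparing leading orders, the left side is a sum over $\CO\in\CN_{\frak{g}_n}$ of $c_\CO(\wt\Pi_\psi)|t|^{-\dim\frak{g}_n-\dim\CO}\hat\mu_\CO(f)$. The right side is a double sum over pairs $(\CO_1,\CO_2)$ of terms $|t|^{\dim\frak g_n-\dim\frak{g_1'}^\theta-\dim\frak{g_2'}^\theta}|t|^{-\dim\frak{g_1'}^\theta-\dim\CO_1-\dim\frak{g_2'}^\theta-\dim\CO_2}$, multiplied by the product of coefficients and of $\hat\mu$'s.

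Set $d=\max\{\dim\CO : c_\CO(\pi)\neq0 \text{ for some } \pi\in\wt\Pi_\psi\}$, and let $d_i$ be the maximal dimension of orbits $\CO_i$ with $c_{\CO_i}(\wt\pi_{\psi^i})\neq0$. Comparing the most singular powers of $|t|$ as $t\to0$ gives
\[
\dim\frak g_n + d = \dim\frak g_1'^{\theta}+\dim\frak g_2'^{\theta} + d_1 + d_2 - \dim\frak g_n + 2\dim\frak g_n - \dim\frak g_1'^\theta-\dim\frak g_2'^\theta .
\]
After careful bookkeeping this reduces to the codimension identity
\[
\dim\frak g_n - d = (\dim\frak{g_1'}^\theta - d_1)+(\dim\frak{g_2'}^\theta - d_2).
\]
This comparison requires that both sides genuinely have nonzero leading terms. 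On the left this is achieved with Lemma \ref{liorbits}: choose $f$ isolating a maximal $\CO_0$. There is a subtlety, since the packet sum $c_{\CO}(\wt\Pi_\psi)$ could cancel. I would handle this by noting that leading coefficients $c_\CO(\pi)$ for maximal orbits are positive, being dimensions of degenerate Whittaker models by \cite{MW87}, so no cancellation can occur. On the right the analogous step uses that the $\hat\mu$ of the top orbits of $\wt\pi_{\psi^i}$ can be isolated by choosing $\wt f^i$, which is possible by the surjectivity of $\iota_{G_i'}$ \cite[Corollary 2.1.2]{Art13}. The leading twisted coefficients are nonzero by their definition, and one takes $f'=f'^1\otimes f'^2$ and lifts it to $f$.

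With $\dim\frak g_n-d = \sum_i(\dim\frak{g_i'}^\theta-d_i)$ in hand, I would apply Proposition \ref{prop:keylemma-v3-all}. It states that $\dim\frak g_n - \dim_{\frak g_n}\eta(\ul p(\psi))$ equals $\sum_i(\dim\frak{g_i'}^\theta - \dim(\ul p(\psi^i)^t)_{{\RG_i'}^\theta})$. Hence $d\le\dim_{\frak g_n}\eta(\ul p(\psi))$ if and only if $d_1+d_2\le \sum_i\dim(\ul p(\psi^i)^t)_{{\RG'_i}^\theta}$. Conjecture \ref{wfsbitorsor} for $\psi^1$ and $\psi^2$, that is, $d_i\le \dim(\ul p(\psi^i)^t)_{{\RG'_i}^\theta}$, thus gives the inequality for the packet, which is the direction "$\Rightarrow$". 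For the converse I would apply the packet inequality to all parameters. Take $\psi$ with $\psi^2=0$ or $\psi^1=0$, so that only one factor occurs; then $d_i\le$ bound follows for arbitrary $\psi^i$ of homogeneous parity, which is all that Conjecture \ref{wfsbitorsor} concerns through these twisted points. The remaining parity constraints (odd versus even $b_i$) would have to be handled by choosing the ambient group $\RG_n$ appropriately, using Lemma \ref{lem:onefactor-soeven-codim} to translate between the $\SO_{2n}$ and $\Sp_{2n}$ codimensions. Finally, the \emph{in particular} clause follows because $\ul p>\eta(\ul p(\psi))$ forces $\dim\ul p>\dim\eta(\ul p(\psi))$.

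The main obstacle is the nonvanishing of the leading terms, in particular the positivity or non-cancellation on the left and the simultaneous isolation on the right via transfer. A secondary difficulty is the converse direction, where one must realize every bitorsor parameter as a one-factor piece of some packet.
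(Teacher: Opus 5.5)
\textbf{Forward direction.} This is essentially the paper's argument. At $x=s_\psi$ all signs are $\langle 1,\pi\rangle=1$. An orbit of maximal dimension $d$ has $c_{\CO}(\pi)\geq 0$ for every $\pi\in\wt\Pi_\psi$, and $>0$ for some, by \cite{MW87}. Isolating it with Lemma \ref{liorbits} and rescaling yields $\CO_1,\CO_2$ with nonzero coefficients and $\dim\frak g_n-d=\sum_i(\dim\frak{g_i'}^{\theta}-\dim\CO_i)$. Proposition \ref{prop:keylemma-v3-all} and the conjecture for $\psi^1,\psi^2$ then give $d\le\dim_{\frak g_n}(\eta_{\hat{\frak{g}}_n,\frak{g}_n}(\ul p(\psi)))$. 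Some corrections:
\begin{itemize}
\item Only this one-sided isolation is needed. Your extra isolation on the twisted side would not work as justified. Surjectivity of $\iota_{G_i'}$ gives no control over the descents $\wt f^i_{\theta_{\widehat{\mathrm{G}}_i'}}$. Lifting an arbitrary $f'^1\otimes f'^2$ to some $f\in\wt\CH(G_n)$ would need surjectivity of the endoscopic transfer $G_n\to G'$, which is not available.
\item Your first exponent equation reduces to $d=d_1+d_2$, which is false. It is the homogeneity $\hat\mu_\CO(f_{t^2})=|t|^{2\dim\frak h-\dim\CO}\hat\mu_\CO(f)$ that gives the codimension identity.
\item For $\SO_{2n}^{\alpha}$, $f$ must be symmetrized under the outer automorphism so that it lies in $\wt\CH(G_n)$.
\end{itemize}

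\textbf{Converse: the genuine gap.} From the packet inequality for the given $\psi$ you must bound $\frak p^m(\wt\pi_\psi)$ for $\wt\pi_\psi$ itself. The identity \eqref{sec6equ2_2sp2n} at $s_\psi$ involves only $\wt\pi_{\psi^1}$ and $\wt\pi_{\psi^2}$. These pieces have homogeneous parity by construction, so a parameter with both odd and even $b_i$ is never a one-factor piece, for any choice of $\RG_n$. Your argument therefore never reaches Conjecture \ref{wfsbitorsor} for mixed-parity $\psi$; it also assumes the inequality for parameters other than $\psi$. The claim that homogeneous parameters are ``all that the conjecture concerns'' is false.

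\textbf{The missing idea} is to use the stable identity, \cite[Theorem 2.2.1]{Art13} with $x=1$:
$\sum_{\pi\in\wt\Pi_\psi}\langle s_\psi,\pi\rangle f_{G_n}(\pi)=\tr(\wt\pi_\psi(\wt f))$.
This contains $\wt\pi_\psi$ directly.
\begin{itemize}
\item Suppose $\CO_0\in\CN_{\frak g_n^\theta}$ has $c_{\CO_0}(\wt\pi_\psi)\neq0$ and dimension exceeding $\dim_{\frak g_n^\theta}((\ul p(\psi)^t)_{\RG_n^\theta})$. Surjectivity of transfer and of twisted descent at $1$ (\cite[Corollary 2.1.2]{Art13}, \cite[Proposition 3.9]{Var17}) lets you choose $\wt f$ whose descent isolates $\CO_0$.
\item After rescaling, the right side is a single monomial in $|t|$. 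Hence some $\CO_1\in\CN_{\frak g_n}$ has $\sum_\pi\langle s_\psi,\pi\rangle c_{\CO_1}(\pi)\neq0$ and $\dim\CO_1=\dim\frak g_n-\dim\frak g_n^\theta+\dim\CO_0$.
\item The signs are harmless, because you isolate on the twisted side and a nonzero signed sum already forces $c_{\CO_1}(\pi)\neq0$ for some $\pi$. Your worry about cancellation matters only for the forward direction.
\item The one-factor comparisons finish the argument: Proposition \ref{prop identity} for $\Sp_{2n},\SO_{2n+1}$, and Lemma \ref{lem:onefactor-soeven-codim} for $\SO_{2n}^\alpha$. They give $\dim\CO_1>\dim_{\frak g_n}(\eta_{\hat{\frak{g}}_n,\frak{g}_n}(\ul p(\psi)))$, contradicting the hypothesis.
\end{itemize}
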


\begin{proof}
First, we assume that 
Conjecture \ref{wfsbitorsor} is true.
Assume that there there exist
$\pi \in \wt{\Pi}_{\psi}$ and   
$\ul{p} \in \frak{p}^m(\pi)$, such that 
$\dim_{{\frak{g}}_n} (\ul{p}) > \dim_{{\frak{g}}_n} (\eta_{{\hat{\frak{g}}_n,\frak{g}_n}}(\ul{p}(\psi)))$. 
Without loss of generality, we assume further that
$$\ul{p}\in max\{\cup_{\pi\in\wt{\Pi}_{\psi}} \frak{p}^m(\pi)\}.$$ 

In the identity \eqref{sec6equ2_2sp2n}, we have that for any  nilpotent orbit $\CO$, 
$$c_{\CO}(\wt{\Pi}_{\psi})=\sum_{\pi \in \wt\Pi_{\psi}} c_{\CO}(\pi).$$
On the other hand, since $\ul{p}\in max\{\cup_{\pi\in\wt{\Pi}_{\psi}} \frak{p}^m(\pi)\}$, by \cite{MW87},
for any nilpotent orbit $\CO_{\ul{p}}$ corresponding to $\ul{p}$, 
$c_{\CO_{\ul{p}}}(\pi) \in \mathbb{Z}_{\geq 0}$ and there exist $\pi \in \wt\Pi_{\psi}$ and a rational nilpotent orbit $\CO_0 \in \CN_{\frak{g}_{n}}$ corresponding to $\ul{p}$ such that $c_{\CO_0}(\pi) \in \mathbb{Z}_{> 0}$. Hence, 
$c_{\CO_0}(\wt{\Pi}_{\psi}) >0$.
By Lemma \ref{liorbits}, there exists a function $f_0 \in \mathcal{H}(G_n)$ with arbitrarily small support such that 
$$\hat{\mu}_{\CO_0}(f_0)=1, \text{ and } \hat{\mu}_{\CO}(f_0)=0,$$
for any other $\CO \in \CN_{\frak{g}_{n}}$.
Let $f=f_0$ when $\mathrm{G}_n=\Sp_{2n}, \SO_{2n+1}$, and let $f=R(c_{\RG_n})(f_0+f_0^c)$ when 
$\mathrm{G}_n=\SO_{2n}^{\alpha}$, where $c$ is the non-trivial element in the outer automorphism group. Then $f \in \wt{\CH}(G_n)$. When 
$\mathrm{G}_n=\SO_{2n}^{\alpha}$, if $\CO_0\neq \CO_0^c$, i.e., $\CO_0$ is a very even nilpotent orbit, then we have that 
$$\hat{\mu}_{\CO_0}(f)=\hat{\mu}_{\CO_0^c}(f)=1, \text{ and } \hat{\mu}_{\CO}(f)=0,$$
for any other $\CO \in \CN_{\frak{g}_{n}}$. Note that $\dim(\CO_0)=\dim(\CO_0^c)$, and $c_{\CO_0^c}(\wt{\Pi}_{\psi}) \in \mathbb{Z}_{\geq 0}$. 

Now, without loss of generality, we can assume that $f$, $R(c_{\RG_1'})f'^1\otimes R(c_{\RG_2'})f'^2$,  {$\wt{f}^1$, $\wt{f}^2$} , $\wt{f}^1_{\theta_{\widehat{\mathrm{G}}_1'}}$, and $\wt{f}^2_{\theta_{\widehat{\mathrm{G}}_2'}}$ all have small supports near 1. Take $t \in F^{\times}$ with $|t|$ small. By Lemma \ref{transfer} and Lemma \ref{transfer2}, 
we have 
\begin{align}\label{sec6equ2_3sp2n}
\begin{split}
    &\,  \sum_{\CO \in \CN_{\frak{g}_{n}}}
c_{\CO}(\wt{\Pi}_{\psi})\hat{\mu}_\CO((f)_{t^2}) \\
= &\, |t|^{\dim(\frak{g}_n)-\dim(\frak{g_1'}^{\theta})-\dim(\frak{g'_2}^{\theta})} \left(\sum {_{\CO \in \CN_{\frak{g_1'}^{\theta}}}
} c_{\CO}(\wt{\pi}_{\psi^1})\hat{\mu}_\CO((\wt{f}^1_{\theta_{\widehat{\mathrm{G}}_1'}})_{t^2})\right) \\
&\, \cdot \left(\sum {_{\CO \in \CN_{\frak{g_2'}^{\theta}}}
} c_{\CO}(\wt{\pi}_{\psi^2})\hat{\mu}_\CO((\wt{f}^2_{\theta_{\widehat{\mathrm{G}}_2'}})_{t^2})\right).
\end{split}
\end{align}
On the other hand,
\begin{align*} 
\hat{\mu}_\CO((f)_{t^2})&=|t|^{2\dim(\frak{g_n})-\dim(\CO)}\hat{\mu}_\CO(f),\\
\hat{\mu}_\CO((\wt{f}^1_{\theta_{\widehat{\mathrm{G}}_1'}})_{t^2})&=|t| {^{2\dim(\frak{g_1'}^{\theta})-\dim(\CO)}} \hat{\mu}_\CO(\wt{f}^1_{\theta_{\widehat{\mathrm{G}}_1'}}),\\
\hat{\mu}_\CO((\wt{f}^2_{\theta_{\widehat{\mathrm{G}}_2'}})_{t^2})&=|t| {^{2\dim(\frak{g_2'}^{\theta})-\dim(\CO)}} \hat{\mu}_\CO(\wt{f}^2_{\theta_{\widehat{\mathrm{G}}_2'}}).
\end{align*}
        
Therefore, we obtain that 
\begin{align}\label{sec6equ2_4sp2n}
\begin{split}
    &\,  
    |t|^{\dim(\frak{g}_n)-\dim(\CO_0)}
c_{\CO_0}(\wt{\Pi}_{\psi})\hat{\mu}_{\CO_0}(f) \\
= &\, \left(\sum {_{\CO \in \CN_{\frak{g_1'}^{\theta}}}
} |t| {^{\dim(\frak{g_1'}^{\theta})-\dim(\CO)}
} c_{\CO}(\wt{\pi}_{\psi^1})\hat{\mu}_\CO(\wt{f}^1_{\theta_{\widehat{\mathrm{G}}_1'}})\right)\\
&\,\cdot \left(\sum {_{\CO \in \CN_{\frak{g_2'}^{\theta}}}
} |t| {^{\dim(\frak{g_2'}^{\theta})-\dim(\CO)}
} c_{\CO}(\wt{\pi}_{\psi^2})\hat{\mu}_\CO(\wt{f}^2_{\theta_{\widehat{\mathrm{G}}_2'}})\right).
\end{split}
\end{align}

Then one can see that, there exist nilpotent orbits  {$\CO_1 \in \CN_{\frak{g_1'}^{\theta}}$, $\CO_2 \in \CN_{\frak{g_2'}^{\theta}}$ } such that 
$c_{\CO_1}(\wt{\pi}_{\psi^1}) \hat{\mu}_{\CO_1}({{}\wt{f}^1_{\theta_{\widehat{\mathrm{G}}_1'}}})$ and $c_{\CO_2}(\wt{\pi}_{\psi^2}) \hat{\mu}_{\CO_2}({{}\wt{f}^2_{\theta_{\widehat{\mathrm{G}}_2'}}})$ are all nonzero and
\begin{align*}
\begin{split}
 \dim(\frak{g}_{n}) - \dim_{\frak{g}_{n}}(\CO_0) 
    = \dim( {\frak{g_1'}^{\theta}} ) + \dim( {\frak{g_2'}^{\theta}} ) -\dim {_{\frak{g_1'}^{\theta}}} (\CO_1)-\dim {_{\frak{g_2'}^{\theta}}} (\CO_2).
    \end{split}
\end{align*}

By Proposition \ref{prop:keylemma-v3-all}, 
we can see that 
\begin{align*}
\dim {_{\frak{g_1'}^{\theta}}} (\CO_1)+\dim {_{\frak{g_2'}^{\theta}}} (\CO_2)
> \dim {_{\frak{g_1'}^{\theta}}} ( (\ul{p}(\psi {^1} ){^t} ){_{{\RG_1'}^{\theta}}} )+\dim {_{\frak{g_2'}^{\theta}}} ((\ul{p}(\psi^2)^t) {_{{\RG_2'}^{\theta}}} ).
\end{align*}
Hence, we have either 
$$\dim {_{\frak{g_1'}^{\theta}}} (\CO_1)> \dim {_{\frak{g_1'}^{\theta}}} ( (\ul{p}(\psi {^1} ){^t} ){_{{\RG_1'}^{\theta}}} ),$$
or, 
$$\dim {_{\frak{g_2'}^{\theta}}} (\CO_2)> \dim {_{\frak{g_2'}^{\theta}}} ((\ul{p}(\psi^2)^t) {_{{\RG_2'}^{\theta}}} ),$$
contradicting Conjecture \ref{wfsbitorsor}.

Next, we assume that for any 
$\pi\in\wt{\Pi}_{\psi}$,
$\ul{p}\in\frak{p}^m(\pi)$, $$\dim_{{\frak{g}}_n} (\ul{p}) \leq \dim_{{\frak{g}}_n} (\eta_{{\hat{\frak{g}}_n,\frak{g}_n}}(\ul{p}(\psi))).$$ Assume that Conjecture \ref{wfsbitorsor} does not hold, i.e., there exists $\ul{p} \in \frak{p}^m(\wt{\pi}_{\psi})$, such that 
 $$\dim_{\frak{g}_n^{\theta}}(\ul{p}) >  \dim_{\frak{g}_n^{\theta}}((\ul{p}(\psi)^t)_{\RG_n^{\theta}}).$$

Applying \cite[Theorem 2.2.1, Part (b)]{Art13} with $x=s=1$, we have the following distribution identity
\begin{equation}\label{distribution identity2}
    \sum_{\pi \in \wt{\Pi}_{\psi}} \langle s_{\psi}, \pi \rangle f_{G_n}(\pi) = f(\psi)=\tr (\wt{\pi}_{\psi}(\wt{f})),
\end{equation}
where $f \in \wt{\CH}(G_n)$, $\wt{f} \in
\wt{\CH}(N)$ such that $\wt{f}$ transfers to $f^{G_n}$. We take the character expansions of both sides of \eqref{distribution identity2}.  The left hand side  {is expanded at the identity, and } the right hand side  {is expanded at the twisted points } $\theta_{\widehat{\mathrm{G}}_n}$ (see \eqref{thetaGn}). We have the following equality: 
\begin{align}\label{sec6otherdirection1}
\begin{split}
 \sum_{\pi \in \wt\Pi_{\psi}} \langle s_{\psi}, \pi \rangle \sum_{\CO \in \CN_{\frak{g}_{n}}} 
c_{\CO}(\pi)\hat{\mu}_\CO(f) 
=  \sum {_{\CO \in \CN_{\mathfrak{g}_{n}^{\theta}}}
} c_{\CO}(\wt{\pi}_{\psi})\hat{\mu}_\CO({{}\wt{f}_{\theta_{\widehat{\mathrm{G}}_n}}}),
\end{split}
\end{align}
where ${{}\wt{f}_{\theta_{\widehat{\mathrm{G}}_n}}}$ is the Harish-Chandra descent of $\widetilde{f}$ (see for example \cite[Section 3.1]{Kon02}). 
Let $c_{\CO}(\wt{\Pi}_{\psi})=\sum_{\pi \in \wt\Pi_{\psi}} \langle s_{\psi}, \pi \rangle
c_{\CO}(\pi)$, then \eqref{sec6otherdirection1} becomes
\begin{align}\label{sec6otherdirection2}
\begin{split}
\sum_{\CO \in \CN_{\frak{g}_{n}}} 
c_{\CO}(\wt{\Pi}_{\psi})\hat{\mu}_\CO(f) 
= \sum {_{\CO \in \CN_{\mathfrak{g}_{n}^{\theta}}}
} c_{\CO}(\wt{\pi}_{\psi})\hat{\mu}_\CO({{}\wt{f}_{\theta_{\widehat{\mathrm{G}}_n}}}).
\end{split}
\end{align}

Since $\ul{p} \in \frak{p}^m(\wt{\pi}_{\psi})$, there exists a rational nilpotent orbit $\CO_0 \in \CN_{\frak{g}_{n}^{\theta}}$ corresponding to $\ul{p}$ such that $c_{\CO_0}(\wt{\pi}) \neq 0$.
Since both the transfer map and the Harish-Chandra descent map are surjective at 1 (see \cite[Corollary 2.1.2]{Art13} and \cite[Proposition 3.9]{Var17}), there exist $f$ and $\wt{f}$ such that $\hat{\mu}_{\CO_0}({{}\wt{f}_{\theta_{\widehat{\mathrm{G}}_n}}}) =1$ and 
$\hat{\mu}_\CO({{}\wt{f}_{\theta_{\widehat{\mathrm{G}}_n}}}) =0$ for any other $\CO \in \CN_{\mathfrak{g}_{n}^{\theta}}$. Therefore, \eqref{sec6otherdirection2} becomes 
$$\sum_{\CO \in \CN_{\frak{g}_{n}}} 
c_{\CO}(\wt{\Pi}_{\psi})\hat{\mu}_\CO(f) = c_{\CO_0}(\wt{\pi}_{\psi})\hat{\mu}_{\CO_0}({{}\wt{f}_{\theta_{\widehat{\mathrm{G}}_n}}}).$$

Now, without loss of generality, we can assume that $f$, $\wt{f}$,  $\wt{f}_{\theta_{\widehat{\mathrm{G}}_n}}$ all have small supports near 1. Take $t \in F^{\times}$ with $|t|$ small. By Lemma \ref{transfer} and Lemma \ref{transfer2}, 
we have 
\begin{align*}
\begin{split}
    \sum_{\CO \in \CN_{\frak{g}_{n}}}
c_{\CO}(\wt{\Pi}_{\psi})\hat{\mu}_\CO((f)_{t^2}) 
= |t|^{\dim(\frak{g}_n)-\dim(\frak{g}_n^{\theta})} c_{\CO_0}(\wt{\pi}_{\psi})\hat{\mu}_{\CO_0}(({{}\wt{f}_{\theta_{\widehat{\mathrm{G}}_n}}})_{t^2}).
\end{split}
\end{align*}
On the other hand, 
\begin{align*}
\hat{\mu}_\CO((f)_{t^2})&=|t|^{2\dim(\frak{g_n})-\dim(\CO)}\hat{\mu}_\CO(f),\\
\hat{\mu}_{\CO_0}(({{}\wt{f}_{\theta_{\widehat{\mathrm{G}}_n}}})_{t^2}) & = |t|^{2\dim(\mathfrak{g}_{n}^{\theta})-\dim(\CO_0)}\hat{\mu}_{\CO_0}({{}\wt{f}_{\theta_{\widehat{\mathrm{G}}_n}}}).
\end{align*}

Therefore, there exists $\CO_1 \in \CN_{\frak{g}_{n}}$ such that $c_{\CO_1}(\wt{\Pi}_{\psi}) \neq 0$ and 
$$2\dim(\frak{g_n})-\dim(\CO_1)= \dim(\frak{g}_n)-\dim(\frak{g}_n^{\theta}) + 2\dim(\mathfrak{g}_{n}^{\theta})-\dim(\CO_0).$$
Hence, when $\RG_n=\SO_{2n+1}, \Sp_{2n}$, $\dim(\CO_1)= \dim(\CO_0) >\dim_{\frak{g}_n^{\theta}}((\ul{p}(\psi)^t)_{\RG_n^{\theta}})$ which is equal to $$\dim_{\frak{g}_{n}}(\eta_{\hat{\frak{g}}_n, \frak{g}_{n}}([b_1^{a_1} \cdots b_r^{a_r}]))$$ by Proposition \ref{prop identity}, contradicting our assumption. When $\RG_n=\SO_{2n}$, $\dim(\CO_1)= \dim(\frak{g_n}) - \dim(\mathfrak{g}_{n}^{\theta}) + \dim(\CO_0) >\dim(\frak{g_n}) - \dim(\mathfrak{g}_{n}^{\theta}) + \dim_{\frak{g}_n^{\theta}}((\ul{p}(\psi)^t)_{\RG_n^{\theta}})=\dim_{\frak{g}_{n}}(\eta_{\hat{\frak{g}}_n, \frak{g}_{n}}([b_1^{a_1} \cdots b_r^{a_r}]))$, by Lemma \ref{lem:onefactor-soeven-codim}, also contradicting our assumption.

This completes the proof of Theorem \ref{mainintropart1}.
\end{proof}

Next, we prove Theorem \ref{mainintro}(3-a).

\begin{thm}\label{mainintropart2}
Let $\psi$ be a local Arthur parameter as in \eqref{lap}. Assume that Conjecture \ref{wfsbitorsor} holds. Then Conjecture \ref{shaconj2} is true.
\end{thm}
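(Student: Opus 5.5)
Assuming Conjecture \ref{wfsbitorsor}, Theorem \ref{mainintropart1} bounds the wavefront orbits of every member of the packet:
$$\dim_{\frak{g}_n}(\ul{p}) \leq \dim_{\frak{g}_n}(\eta_{\hat{\frak{g}}_n,\frak{g}_n}(\ul{p}(\psi)))$$
for all $\pi \in \wt{\Pi}_{\psi}$ and $\ul{p} \in \frak{p}^m(\pi)$. We then prove the two directions of Conjecture \ref{shaconj2} separately.

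\emph{Tempered $\Rightarrow$ generic.} If $\psi$ is tempered, the packet $\wt{\Pi}_{\psi}$ is the $L$-packet $\Pi_{\phi_\psi}$. The Shahidi conjecture (Conjecture \ref{shaconj}) is already known for quasi-split classical groups \cite{Art13, Mok15}, so this packet contains a generic member. Alternatively, the construction of Section \ref{nonvan} gives this directly: when all $b_i=1$ we have $n'=0$ and $n^*=n$ (up to the appropriate normalization), so $\sigma=\sigma^{(t)}$ is tempered generic.

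\emph{Generic $\Rightarrow$ tempered.} Suppose $\pi\in\wt{\Pi}_{\psi}$ is generic. Then $\frak{p}^m(\pi)$ consists of the regular partition of $\frak{g}_n$, which has dimension $\dim\frak{g}_n-\mathrm{rank}$. The bound above gives
$$\dim_{\frak{g}_n}(\eta_{\hat{\frak{g}}_n,\frak{g}_n}(\ul{p}(\psi)))\geq \dim_{\frak{g}_n}(\text{regular}).$$
Since the regular orbit is the unique orbit of maximal dimension, this forces $\eta_{\hat{\frak{g}}_n,\frak{g}_n}(\ul{p}(\psi))$ to be regular. The Barbasch--Vogan duality map is order-reversing and sends the zero orbit $[1^N]$ of $\widehat{\mathrm{G}}_n$ to the regular orbit. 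We must show that $[1^N]$ is the only preimage of the regular orbit.

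Proposition \ref{prop identity} handles this cleanly: it gives
$$\dim_{\frak{g}_n}(\eta(\ul{p}(\psi)))=\dim_{\hat{\frak{g}}_n}((\ul{p}(\psi)^t)_{\widehat{\mathrm{G}}_n}).$$
Moreover $\dim\frak{g}_n-\mathrm{rank}=\dim\hat{\frak{g}}_n-\mathrm{rank}$, because the two algebras have the same dimension and rank. Hence $(\ul{p}(\psi)^t)_{\widehat{\mathrm{G}}_n}$ is regular in $\hat{\frak{g}}_n$. This collapse is at most $\ul{p}(\psi)^t$, and the regular partition $[N]$ of $\widehat{\mathrm{G}}_n$ is maximal among all partitions of $N$, so $\ul{p}(\psi)^t=[N]$. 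Therefore $\ul{p}(\psi)=[1^N]$, i.e.\ all $b_i=1$, and $\psi$ is tempered.

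The main obstacle is this last injectivity step. One must check that the $\widehat{\mathrm{G}}_n$-collapse preserves the regular class only for $[N]$. For $\SO_{2n}(\BC)$ the regular partition is $[(2n-1)1]$, not $[2n]$, so we need $(\ul{p}^t)_{\SO_{2n}}=[(2n-1)1]$ to force $\ul{p}^t\in\{[2n],[(2n-1)1]\}$. Then $\ul{p}(\psi)=[1^{2n}]$ or $[21^{2n-2}]$, and the latter is not orthogonal because its even part $2$ has odd multiplicity. So only $[1^{2n}]$ survives, and we verify such small cases directly from the parity constraints on $\ul{p}(\psi)$.
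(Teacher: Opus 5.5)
Your proof is correct, but it takes a different route from the paper's.

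\textbf{Your route.} You derive the statement as a formal corollary of the forward direction of Theorem \ref{mainintropart1}. A generic member puts the regular partition into $\frak{p}^m(\pi)$. The $\eta$-bound then forces $\dim_{\frak{g}_n}\eta_{\hat{\frak{g}}_n,\frak{g}_n}(\ul{p}(\psi))$ to be maximal. Proposition \ref{prop identity} together with partition combinatorics then forces $\ul{p}(\psi)=[1^N]$.

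\textbf{The paper's route.} The paper does not pass through $\eta$ at all; it reruns the matching argument directly with the regular orbit. Since $x=s_\psi$, all signs are $+1$, and regular coefficients are $0$ or $1$, so $c_{\CO_0}(\wt{\Pi}_{\psi})\neq 0$. The $|t|$-scaling then produces $\CO_1,\CO_2$ on the twisted side with nonzero coefficients whose codimensions add up to $n=n_1+n_2$. This forces both to be regular. Non-temperedness, however, means $(\ul{p}(\psi^1)^t)_{{\RG_1'}^{\theta}}$ and $(\ul{p}(\psi^2)^t)_{{\RG_2'}^{\theta}}$ cannot both be regular, which contradicts Conjecture \ref{wfsbitorsor} for $\psi^1,\psi^2$.

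\textbf{What each buys.} The paper's argument needs only the fact that a regular orbit has codimension equal to the rank. It bypasses Lemma \ref{keylemma2} and Propositions \ref{prop:keylemma-v3-all} and \ref{prop identity}, the last of which has its proof omitted. Yours isolates the conceptual point that the enhanced Shahidi conjecture follows from the $\eta$-upper bound alone, so any independent proof of that bound (such as \cite{AC26}) yields it.

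\textbf{You can drop Proposition \ref{prop identity}.} Read Definition \ref{BV duality map} directly. Regularity of $\eta_{\hat{\frak{g}}_n,\frak{g}_n}(\ul{p})$ forces $\ul{p}^t=[N]$ when $\mathrm{G}_n=\SO_{2n+1}$, and $\ul{p}^t\in\{[N],[(N-1)1]\}$ otherwise. In the latter cases $\ul{p}=[21^{N-2}]$ is excluded because it is not orthogonal, exactly as in your $\SO_{2n}$ check.

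Your treatment of the converse, citing \cite{Art13, Mok15}, matches what the paper takes for granted.
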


\begin{proof}
Assume that $\psi = \bigoplus_{i=1}^r \phi_i \otimes S_{m_i} \otimes S_{n_i}$ is non-tempered, i.e., there exists $n_i >1$. As before, we assume that $\phi_i$ has dimension $k_i$, let $a_i=k_im_i$ and $b_i=n_i$. Then $\ul{p}_{\psi}=[b_1^{a_1} \cdots b_r^{a_r}]$ and by Theorem \ref{wfslinear}, $\frak{p}^m({\pi}_{\psi})=\{[b_1^{a_1} \cdots b_r^{a_r}]^t\}.$ Recall that $I=\{i|b_i \text{ odd}\}$, $J=\{i|b_i \text{ even}\}$, and 
$\psi^1=\sum_{i\in I}\psi_i$, $\psi^2 = \sum_{j \in J}\psi_j$.
Since $\psi$ is non-tempered, one can easily see that the two partitions
 {$(\ul{p}(\psi^1)^t)_{{\RG_1'}^{\theta}}$ and 
$(\ul{p}(\psi_2)^t)_{{\RG_2'}^{\theta}}$ } cannot be both parametrizing regular nilpotent orbits. 

Assume that there exists a generic representation $\pi_0 \in \widetilde{\Pi}_{\psi}$. 
Then there exists a regular nilpotent orbit $\CO_0 \in \CN_{\frak{g}_{n}}$ such that the coefficient $c_{\CO_0}(\pi_0)$ in the character expansion of $\pi_0$ is nonzero, indeed it is $1$ by the result in \cite{MW87} and the uniqueness of Whittaker models. Since for any $\pi \in \widetilde{\Pi}_{\psi}$, $c_{\CO_0}(\pi)$ is either one or zero and $s=1$,  
we can see that 
$\sum_{\pi \in \widetilde{\Pi}_{\psi}} c_{\CO_0}(\pi) \neq 0$. 

Applying similar arguments as in the proof of Theorem 
\ref{mainintropart1}, one can see that there exist nilpotent orbits  {$\CO_1 \in \CN_{\frak{g_1'}^{\theta}}$ and $\CO_2 \in \CN_{\frak{g_2'}^{\theta}}$ } such that both $c_{\CO_1}(\wt{\pi}_{\psi^1}) \hat{\mu}_{\CO_1}({{}\wt{f}^1_{\theta_{\widehat{\mathrm{G}}_1'}}})$ and  $c_{\CO_2}(\wt{\pi}_{\psi^2}) \hat{\mu}_{\CO_2}({{}\wt{f}^2_{\theta_{\widehat{\mathrm{G}}_2'}}})$ are nonzero, and 
\begin{align*}
\begin{split}
    &\, \dim(\frak{g}_{n}) - \dim_{\frak{g}_{n}}(\CO_0) \\
    = &\, \dim( {\frak{g_1'}^{\theta}} ) + \dim( {\frak{g_2'}^{\theta}} ) -\dim {_{\frak{g_1'}^{\theta}}} (\CO_1)-\dim {_{\frak{g_2'}^{\theta}}} (\CO_2).
    \end{split}
\end{align*}
Note that the regular orbit $\CO_0$ has maximal dimension of $\dim(\CO_0)= \dim(\frak{g}_{n}) -n$. Hence, $$\dim {_{\frak{g_1'}^{\theta}}} (\CO_1)+\dim {_{\frak{g_2'}^{\theta}}} (\CO_2) =\dim( {\frak{g_1'}^{\theta}} ) + \dim( {\frak{g_2'}^{\theta}} )-n_1-n_2,$$
since $n=n_1+n_2$. Therefore, both $\CO_1$ and $\CO_2$ are 
regular orbits, which is a contradiction by Conjecture \ref{wfsbitorsor}, and the discussion above. 

This completes the proof of Theorem \ref{mainintropart2}.
\end{proof}

At last, we prove Theorem \ref{mainintro}(3-b).

\begin{thm}\label{mainintropart3}
Let $\psi$ be a local Arthur parameter as in \eqref{lap}, with $\ul{p}(\psi) = [b_1^{a_1} b_2^{a_2} \cdots b_r^{a_r}]$ and $b_1 \geq b_2 \geq \cdots \geq b_r$. 
Let $$\underline{p}_1=\left[\big\lfloor \frac{b_1}{2} \big\rfloor^{a_1}
\big\lfloor \frac{b_2}{2} \big\rfloor^{a_2} \cdots \big\lfloor \frac{b_r}{2} \big\rfloor^{a_r}\right]^t,$$
and $n^{*}=\big\lfloor\frac{\sum_{b_i \text{ odd }} a_i}{2}\big\rfloor$. 
Assume that Conjecture \ref{wfsbitorsor} is true. 
    Then Conjecture \ref{cubmfclocal}(3) holds for the following cases.
    \begin{enumerate}
        \item When $\mathrm{G}_n=\Sp_{2n}$, and
        \begin{equation}\label{criterion_intro1_2}
    ([\underline{p}_1\underline{p}_1(2n^*)]^t)_{\Sp_{2n}}=([b_1^{a_1} \cdots b_r^{a_r}]^-)_{\Sp_{2n}}.
\end{equation}
In particular, if 
\begin{enumerate}
    \item[(i)] $a_r=b_r=1$ and $b_i$ are all even for $1 \leq i \leq r-1$,
    \item[(ii)] or, $b_i$ are all odd,
\end{enumerate}
then \eqref{criterion_intro1_2} holds and thus Conjecture \ref{cubmfclocal}(3) is valid. 
        \item When $\mathrm{G}_n=\SO_{2n+1}$, and
        \begin{equation}\label{criterion_intro2_2}
    ([\underline{p}_1\underline{p}_1(2n^*+1)]^t)_{\SO_{2n+1}}=([b_1^{a_1} \cdots b_r^{a_r}]^+)_{\SO_{2n+1}}.
\end{equation}
        In particular, if 
        \begin{enumerate}
            \item[(i)]  $b_1$ is even and $a_1=1$, and $b_i$ are all odd for $2 \leq i \leq r$,
            \item[(ii)] or, $b_i$ are all even,
        \end{enumerate}
       then \eqref{criterion_intro2_2} holds and thus Conjecture \ref{cubmfclocal}(3) is valid. 
        \item When    $\mathrm{G}_n=\SO_{2n}^{\alpha}$, and
         \begin{equation}\label{criterion_intro3_2}
    [\underline{p}_1\underline{p}_1(2n^*-1)1]^{\SO_{2n}}=([b_1^{a_1} \cdots b_r^{a_r}]^t)_{\SO_{2n}}.
 \end{equation}
        In particular, if all $b_i$ are of the same parity, then \eqref{criterion_intro3_2} holds and thus Conjecture \ref{cubmfclocal}(3) is valid.
    \end{enumerate}
\end{thm}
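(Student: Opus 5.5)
The plan is to take the explicit member $\sigma\in\wt{\Pi}_{\psi}$ constructed in Section \ref{nonvan} and show that $\eta_{\hat{\frak{g}}_n,\frak{g}_n}(\ul{p}(\psi))\in\frak{p}^m(\sigma)$. Recall that $\sigma$ is the Langlands quotient of $\rho\rtimes\sigma^{(t)}$, where $\sigma^{(t)}$ is tempered generic. The argument has two halves: a lower bound, showing that $\eta_{\hat{\frak{g}}_n,\frak{g}_n}(\ul{p}(\psi))$ lies in $\frak{p}(\sigma)$, and an upper bound, showing that nothing strictly bigger can occur in $\frak{p}^m(\sigma)$.

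\textbf{Lower bound.} By Lemma \ref{total} and Lemma \ref{raise}, the $\mathrm{G}_n$-expansion of the relevant partition lies in $\frak{p}(\sigma)$. That partition is $[\underline{p}_1\underline{p}_1(2n^*)]$, $[\underline{p}_1\underline{p}_1(2n^*+1)]$ or $[\underline{p}_1\underline{p}_1(2n^*-1)1]$, according as $\mathrm{G}_n=\Sp_{2n}$, $\SO_{2n+1}$ or $\SO_{2n}^{\alpha}$. Theorem \ref{mainthm1} then does the comparison with the dual partition. In the $\Sp_{2n}$ case, the criterion \eqref{criterion_intro1_2} is equivalent to \eqref{oriequ}, which says exactly that this expansion equals $\eta_{\hat{\frak{g}}_n,\frak{g}_n}(\ul{p}(\psi))$; the $\SO_{2n+1}$ case is identical using \eqref{criterion_intro2_2} and \eqref{oriequ2}. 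In the $\SO_{2n}^{\alpha}$ case, hypothesis \eqref{criterion_intro3_2} is literally the equality with $\eta_{\frak{o}_{2n},\frak{o}_{2n}}(\ul{p}(\psi))=([b_1^{a_1}\cdots b_r^{a_r}]^t)_{\SO_{2n}}$. The ``in particular'' clauses follow directly from the corresponding special cases proved in Theorem \ref{mainthm1}. In all cases we conclude that $\eta:=\eta_{\hat{\frak{g}}_n,\frak{g}_n}(\ul{p}(\psi))\in\frak{p}(\sigma)$.

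\textbf{Upper bound.} It remains to show that $\eta$ is maximal in $\frak{p}(\sigma)$. Since $\eta\in\frak{p}(\sigma)$ and $\frak{p}(\sigma)$ is finite, there is some $\ul{q}\in\frak{p}^m(\sigma)$ with $\ul{q}\geq\eta$. Assuming Conjecture \ref{wfsbitorsor}, Theorem \ref{mainintropart1} gives $\dim_{\frak{g}_n}(\ul{q})\leq\dim_{\frak{g}_n}(\eta)$. On the other hand, $\ul{q}\geq\eta$ in the dominance order on $\mathrm{G}_n$-partitions means $\CO_{\eta}\subseteq\overline{\CO_{\ul{q}}}$. If $\ul{q}\neq\eta$, this closure inclusion is strict and forces $\dim\CO_{\ul{q}}>\dim\CO_{\eta}$, which is a contradiction. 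Hence $\ul{q}=\eta$, so $\eta\in\frak{p}^m(\sigma)$, which is Conjecture \ref{cubmfclocal}(3).

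The main obstacle is the strict-inequality step in the upper bound: passing from $\ul{q}>\eta$ to $\dim\CO_{\ul{q}}>\dim\CO_{\eta}$. This relies on the dominance order on $\mathrm{G}_n$-partitions agreeing with the closure order of geometric orbits, which is \cite[Theorem 6.2.5]{CM93}. Two points need care. First, one must make sure the rational orbits involved are compared through their underlying partitions, so that the geometric statement applies. Second, for $\SO_{2n}^{\alpha}$, very even partitions correspond to two orbits of equal dimension; this is harmless here, since the argument only compares dimensions.
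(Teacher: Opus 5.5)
Your proposal is correct and follows the paper's proof. The lower bound $\eta_{\hat{\frak{g}}_n,\frak{g}_n}(\ul{p}(\psi))\in\frak{p}(\sigma)$ comes from Lemma \ref{raise} combined with Theorem \ref{mainthm1}, and maximality comes from Theorem \ref{mainintropart1} under Conjecture \ref{wfsbitorsor}; you simply make explicit the steps the paper's one-line appeal leaves implicit, namely choosing a maximal $\ul{q}\geq\eta$ in the finite set $\frak{p}(\sigma)$ and noting that strict dominance forces a strict increase in orbit dimension.
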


\begin{proof}
By Lemma \ref{raise} and Theorem \ref{mainthm1}, when \eqref{criterion_intro1_2} -- \eqref{criterion_intro3_2} hold, we have constructed an irreducible admissible representation $\sigma \in \wt{\Pi}_{\psi}$ such that $$\eta_{{\hat{\frak{g}}_n,\frak{g}_n}}(\ul{p}(\psi))\in \frak{p}(\sigma).$$
To prove Jiang's Conjecture \ref{cubmfclocal}(3), it remains to show that for any $\ul{p} > \eta_{{\hat{\frak{g}}_n,\frak{g}_n}}(\ul{p}(\psi))$, $\ul{p}\notin \frak{p}^m(\sigma)$; this is implied by Theorem \ref{mainintropart1} under the validity of Conjecture \ref{wfsbitorsor}.
\end{proof}


In following special cases of Theorem \ref{mainintropart3}, we only need to assume Conjecture \ref{shaconj2}. 

\begin{thm}
Let $\psi$ be a local Arthur parameter of $G_n$ as in \eqref{lap}. Assume that Conjecture \ref{shaconj2} is true. 
    Then Conjecture \ref{cubmfclocal} holds for the following cases.
    \begin{enumerate}
        \item When $\mathrm{G}_n=\Sp_{2n}$,
       $a_1=1$, $b_1=3$, and $b_i=1$ for $2 \leq i \leq r$. 
        \item When $\mathrm{G}_n=\SO_{2n+1}$,
        $a_1=1$, $b_1=2$, and $b_i=1$ for $2 \leq i \leq r$. 
        \item When    $\mathrm{G}_n=\SO_{2n}^{\alpha}$, 
       $a_1=1$, $b_1=3$, and $b_i=1$ for $2 \leq i \leq r$.  
    \end{enumerate}
\end{thm}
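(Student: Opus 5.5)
For each of the three families of parameters, I will show Parts (1), (2) and (3) of Conjecture \ref{cubmfclocal} separately. The plan is to use the enhanced Shahidi conjecture, Conjecture \ref{shaconj2}, in place of Conjecture \ref{wfsbitorsor}. In each case $\ul{p}(\psi)$ is $[3\,1^{2n-2}]$, $[2\,1^{2n-2}]$ or $[3\,1^{2n-3}]$, and the first step is to compute $\eta_{\hat{\frak{g}}_n,\frak{g}_n}(\ul{p}(\psi))$ explicitly from Definition \ref{BV duality map}.

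\textbf{Case $\Sp_{2n}$.} Here $\ul{p}(\psi)=[3\,1^{2n-2}]$, so $\ul{p}(\psi)^-=[3\,1^{2n-3}]$. Its $\Sp_{2n}$-collapse is $[2^2 1^{2n-4}]$, and the transpose gives $\eta=[2n-2,\,2]$.

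\textbf{Case $\SO_{2n+1}$.} Here $\ul{p}(\psi)=[2\,1^{2n-2}]$, so $\ul{p}(\psi)^+=[3\,1^{2n-2}]$. This is already orthogonal, and transposing gives $\eta=[2n-1,\,1^2]$.

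\textbf{Case $\SO_{2n}^{\alpha}$.} Here $\ul{p}(\psi)=[3\,1^{2n-3}]$, whose transpose is $[2n-2,1,1]$. Its $\SO_{2n}$-collapse is $[2n-3,3]$.

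In every case $\eta$ is a subregular special orbit, or the largest special orbit below the regular one. Thus the only partitions that are either $>\eta$ or unrelated to $\eta$ and of larger or equal height will be the regular partition together with the finitely many partitions lying strictly between $\eta$ and the regular orbit. I will check case by case that any $\ul{p}$ not $\le\eta$ must dominate $\eta$ or equal the regular one. It then suffices to handle the regular orbit and the (special) orbits between; by Moeglin's result \cite{Moe96}, elements of $\frak{p}^m(\pi)$ are special, so only special partitions need to be excluded.

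\textbf{Parts (1) and (2).} For the regular orbit: $\psi$ is non-tempered, so by Conjecture \ref{shaconj2} the packet $\wt{\Pi}_{\psi}$ has no generic member. Hence the regular partition is not in $\frak{p}^m(\pi)$ for any $\pi\in\wt{\Pi}_{\psi}$. It remains to check that no special partition lies strictly between $\eta$ and the regular orbit. In type $C$, the partitions above $[2n-2,2]$ are $[2n-1,1]$, which is not symplectic, and $[2n]$. In type $B$, the partitions above $[2n-1,1,1]$ are $[2n,1]$, which is not orthogonal, and $[2n+1]$. In type $D$, those above $[2n-3,3]$ are $[2n-2,2]$, which is not orthogonal, $[2n-2,1,1]$, which is orthogonal but non-special, and $[2n-1,1]$, the regular orbit. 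This is exactly where speciality must be invoked, and this non-special case is the step I expect to be most delicate: one has to argue that $[2n-2,1,1]$ cannot be maximal. I would argue this via \cite{Moe96} (maximal orbits are admissible, hence special for classical groups).

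\textbf{Part (3).} I will use Theorem \ref{mainthm1}. Case $\Sp_{2n}$ has $b_1=3$ and the other $b_i=1$, all odd, so criterion (ii) of part (1) applies. Case $\SO_{2n+1}$ has $b_1=2$ with $a_1=1$ and the rest odd, so criterion (i) of part (2) applies. Case $\SO_{2n}$ has all $b_i$ odd, so part (3) applies. Together with Lemma \ref{raise}, this produces $\sigma\in\wt{\Pi}_{\psi}$ with $\eta\in\frak{p}(\sigma)$. Since the regular orbit is excluded for $\sigma$ by Conjecture \ref{shaconj2}, and the only orbits above $\eta$ are non-special or regular, $\eta\in\frak{p}^m(\sigma)$.
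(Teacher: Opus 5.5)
Cases (1) and (2) of your proposal are correct and follow the paper's route. There $\eta=[2n-2,2]$, resp. $[2n-1,1^2]$, is the subregular partition, and every non-regular symplectic (resp. orthogonal) partition is dominated by it. Conjecture \ref{shaconj2} removes the regular orbit, and Theorem \ref{mainthm1} with Lemma \ref{raise} gives $\sigma$ with $\eta\in\frak{p}(\sigma)$, hence $\eta\in\frak{p}^m(\sigma)$. Speciality is not even needed in these two cases.

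Case (3) has a genuine gap, starting with the value of $\eta$.

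\textbf{The collapse is miscomputed.} $[2n-3,3]$ is not dominated by $[2n-2,1,1]$: the sums of the first two parts are $2n>2n-1$. So it cannot be the $\SO_{2n}$-collapse of $[3\,1^{2n-3}]^t=[2n-2,1,1]$. The recipe of \cite[Lemma 6.3.8]{CM93} runs $2n-2\mapsto 2n-3$, $1\mapsto 2$, then $2\mapsto 1$ and appends a $1$. This gives $\eta=[2n-3,1^3]$.

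\textbf{Your Part (3) contradicts your own $\eta$.} Theorem \ref{mainthm1}(3) returns exactly $[2n-3,1^3]$ here: $\ul{p}_1=[1]$, $2n^*=2n-2$, and $[\ul{p}_1\ul{p}_1(2n^*-1)1]=[2n-3,1^3]$ is already special. So invoking that theorem is inconsistent with your value $[2n-3,3]$.

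\textbf{The ``delicate'' step does not arise.} $[2n-2,1,1]$ is not orthogonal at all, since its even part $2n-2$ has multiplicity one.

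\textbf{What remains unexcluded with the correct $\eta$.} For $n\ge 3$, the subregular partition $[2n-3,3]$ lies strictly above $\eta$ and is special, since its transpose $[2^3 1^{2n-6}]$ is symplectic. For $n\ge 4$ there are also special two-row partitions unrelated to $\eta$, e.g. $[4,4]$ when $n=4$, or $[2n-5,5]$ when $n\ge 5$. Conjecture \ref{shaconj2} only excludes the regular orbit, and \cite{Moe96} only excludes non-special ones. So nothing in your argument rules these partitions out for Parts (1)--(2) of Conjecture \ref{cubmfclocal}. For Part (3), you cannot pass from $\eta\in\frak{p}(\sigma)$ to $\eta\in\frak{p}^m(\sigma)$, because $[2n-3,3]$ might lie in $\frak{p}(\sigma)$.

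The paper's own proof asserts that $\eta$ is subregular in all three cases, so it has the same gap. As written, the argument (yours or the paper's) establishes only cases (1) and (2). Case (3) needs additional input that controls $[2n-3,3]$ and the unrelated two-row orbits.
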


\begin{proof}
First, note that with the above assumptions in Cases (1-3), 
$\eta_{{\hat{\frak{g}}_n,\frak{g}_n}}(\ul{p}(\psi))$ will be exactly the partition parametrizing the subregular nilpotent orbits of $\frak{g}_n$. 
On the other hand, by Lemma \ref{raise} and Theorem \ref{mainthm1}, we have constructed an irreducible admissible representation $\sigma \in \wt{\Pi}_{\psi}$ such that $\eta_{{\hat{\frak{g}}_n,\frak{g}_n}}(\ul{p}(\psi))\in \frak{p}(\sigma)$. Hence, it suffices to show that there is no generic element in the local 
Arthur packet corresponding to $\psi$. Since $\psi$ is a non-tempered local Arthur parameter, this is clear from Conjecture \ref{shaconj2}.
\end{proof}

\appendix

\section{On central characters of representations in local Arthur packets, by Alexander Hazeltine, Baiying Liu, Chi-Heng Lo, and Freydoon Shahidi}\label{appendix}

In this appendix, we show that all representations in local Arthur packets have the same central characters, which has its own interests. 

Let $F$ be a non-Archimedean local field of characteristic zero and $V$ a finite dimensional vector space. Let
\[ \langle -,- \rangle: V\times V \to F\]
be a non-degenerate $\epsilon$-symmetric form ($\epsilon \in \{\pm 1\} \subset F^{\times}$):
\begin{align*}
    \langle \alpha v+\beta w, u \rangle&= \alpha \langle v,u\rangle +  \beta \langle w, y \rangle,\\
    \langle v,u \rangle &=\epsilon \langle u,v \rangle.
\end{align*}
We consider $\RG(V) \subset \GL(V)$ be the algebraic subgroup of elements $T$ in $\GL(V)$, which preserve the form $\langle -, - \rangle $:
\[ \langle Tv, Tu\rangle= \langle v,u \rangle. \]
Then $\RG(V)= \RO(V)$ when $\epsilon=1$ and $\RG(V)= \Sp(V)$ when $\epsilon=-1$. We take $\SO(V)$ be the connected component of $\RO(V)$ consisting of elements $T$ with determinant $+1$.

Let $\RG$ be a connected classical group over $F$, and denote $K$ be the splitting field of $\RG(V)$. We consider the $L$-group of $\RG(V)$ given by
\[ {}^{L}\RG:= \widehat{\RG}(\BC) \rtimes \Gal(K/F).\]
The cases we consider are given explicitly in the following table.
\begin{center}
    \begin{tabular}{ |c|c|c|c|}
\hline
     $\RG(V)$& $\widehat{\RG}(\BC)$ & $K$ & ${}^L \RG$  \\
     \hline
     $\Sp(V),$& $\SO_{2n+1}(\BC)$& $F$& $\SO_{2n+1}(\BC)$\\
     $\dim(V)=2n$&&&\\
     \hline
     $\SO(V),$& $\Sp_{2n}(\BC)$& $F$& $\Sp_{2n}(\BC)$\\
     $\dim(V)=2n+1$&&&\\
     \hline
     $\SO(V),$& $\SO_{2n}(\BC)$& $F(\sqrt{\disc(V)})$& $\RO_{2n}(\BC)$ ($\disc(V) \not\in F^{\times2}$) \\
     $\dim(V)=2n$&&&$\SO_{2n}(\BC)$ ($\disc(V) \in F^{\times2}$)\\
     \hline
\end{tabular}
\end{center}
An $L$-parameter $\varphi$ of $\RG(V)$ is an admissible homomorphism from $W_F \times \SL_2(\BC)$ to $ {}^{L} \RG$. The local Langlands correspondence associates to each $\varphi$ a set of isomorphism class of irreducible representation $\Pi_{\varphi}$. 

It is expected that all representations in $\Pi_{\varphi}$ share the same central character $\omega_{\varphi}$. Indeed, in the cases we consider this is known by \cite{GR10, Xu16}. Indeed, the central character is defined by \cite[\S8]{GR10}. Xu verified the representations in a tempered $L$-packet share this central character (\cite[Proposition 6.27]{Xu16}). In particular, this is true for simple $L$-parameters. By \cite[Proposition 2.4]{Xu16} and the Langlands classification, for an $L$-parameter $\varphi$ of $G$, it follows that all representations in $\Pi_\varphi$ share the same central character $\omega_\varphi.$ 

When $\dim(V)=2n$ ($n \geq 2$), the center of $G(V)$ is $Z(k)=\{\pm 1\}$. The following proposition gives $\omega_{\varphi}(-1)$ in these cases. This follows from \cite[\S8]{GR10} (see also \cite[\S 10]{GGP12}).

\begin{prop}\cite[\S 10]{GGP12}\label{prop GGP12}
Fix a nontrivial additive character $\psi$ of $F$ and assume $\dim(V)=2n$.  
\begin{enumerate}
    \item When $\RG(V)=\Sp(V)$, we have $\omega_\varphi (-1)= \epsilon(\varphi, \psi)$.
    \item When $\RG(V)=\SO(V)$, we have $\omega_\varphi (-1)= \epsilon(\varphi, \psi)/ \epsilon(\det \varphi, \psi)$.
\end{enumerate}
\end{prop}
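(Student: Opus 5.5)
The plan is to reduce to tempered, and then discrete, parameters using the Langlands classification. The remaining discrete case is then handled by comparing two normalizations of a local factor. All values are computed at the central element $-1 \in Z(F)=\{\pm 1\}$, with $\dim(V)=2n$.

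\textbf{Step 1 (reduction to tempered).} Write an arbitrary $L$-parameter as
$$\varphi=\varphi_0\oplus\bigoplus_i\left(\rho_i|\cdot|^{s_i}\oplus\rho_i^\vee|\cdot|^{-s_i}\right),$$
with $\varphi_0$ tempered and $s_i>0$. Each $\pi\in\Pi_\varphi$ is the Langlands quotient of $\tau_1|\det|^{s_1}\times\cdots\times\tau_k|\det|^{s_k}\rtimes\pi_0$, where $\pi_0\in\Pi_{\varphi_0}$ and $\tau_i=r(\rho_i)$. The element $-1$ of $G(V)$ lies in the Levi subgroup $\GL_{d_1}\times\cdots\times\GL_{d_k}\times \RG(V_0)$ as $(-I_{d_1},\ldots,-I_{d_k},-1)$. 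Hence
$$\omega_\pi(-1)=\omega_{\pi_0}(-1)\prod_i\omega_{\tau_i}(-1)=\omega_{\pi_0}(-1)\prod_i\det\rho_i(-1),$$
the last equality by local class field theory. On the Galois side, the identity $\epsilon(\rho|\cdot|^{s},\psi)\,\epsilon(\rho^\vee|\cdot|^{-s},\psi)=\det\rho(-1)$ gives
$$\epsilon(\varphi,\psi)=\epsilon(\varphi_0,\psi)\prod_i\det\rho_i(-1).$$
Each block $\rho_i|\cdot|^{s_i}\oplus\rho_i^\vee|\cdot|^{-s_i}$ has trivial determinant, so $\det\varphi=\det\varphi_0$ and the factor $\epsilon(\det\varphi,\psi)$ is unchanged. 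Both formulas in the proposition are therefore compatible with this reduction, and it suffices to treat tempered $\varphi$. The same argument with parabolic induction from discrete series, using Xu's compatibility \cite[Proposition 2.4]{Xu16}, reduces further to discrete parameters.

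\textbf{Step 2 (choice of member).} For tempered $\varphi$, all members of $\Pi_\varphi$ share one central character \cite[Proposition 6.27]{Xu16}. By the Shahidi conjecture (Conjecture \ref{shaconj}, known for these groups), $\Pi_\varphi$ contains a $\psi$-generic member $\pi$. It therefore suffices to compute $\omega_\pi(-1)$ for this generic $\pi$.

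\textbf{Step 3 (discrete generic case; expected main obstacle).} Here one must identify $\omega_\pi(-1)$ with an $\epsilon$-factor of the parameter. I would use the doubling method of Piatetski-Shapiro--Rallis in the normalization of Lapid--Rallis, together with the Gan--Ichino/Gan--Takeda identification $\gamma^{\mathrm{doub}}(s,\pi\times\chi,\psi)=\gamma(s,\varphi\otimes\chi,\psi)$ for generic $\pi$. The doubling gamma factor carries an explicit normalizing factor $\omega_\pi(-1)$, coming from the identification of $\pi^\vee$ with $\pi^{\delta}$, together with a $\chi$-dependent constant. I would take $\chi$ trivial and evaluate at the center $s=\tfrac12$. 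There the $L$-factor quotient is $1$ because $\varphi$ is self-dual and $L(s,\varphi)L(1-s,\varphi)^{-1}$ cancels. Comparing the two sides should give $\omega_\pi(-1)=\epsilon(\tfrac12,\varphi,\psi)$ in the symplectic case.

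In the even orthogonal case, the Weil constant of the quadratic space $V$ appears in the doubling factor. By Hasse-invariant bookkeeping this constant is $\epsilon(\tfrac12,\det\varphi,\psi)$, since $\det\varphi$ is the discriminant character, and this yields the stated ratio.

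The delicate point is tracking the sign conventions exactly: the Weil index, and the choice of $\psi$ versus $\psi_a$, which the formulas must be independent of up to $\det\varphi(a)$. As a check, I would first verify these conventions on $\Sp_2=\SL_2$ and on split $\SO_4$, where the local Langlands correspondence is classical. I would then cite the general normalization from \cite[\S8]{GR10} and \cite[\S10]{GGP12} to conclude.
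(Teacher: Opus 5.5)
Your Steps 1 and 2 are sound, provided $\epsilon(\cdot,\psi)$ is read as the centred value $\epsilon(\tfrac12,\cdot,\psi)$; that normalization is what makes $\epsilon(\rho|\cdot|^{s},\psi)\,\epsilon(\rho^\vee|\cdot|^{-s},\psi)=\det\rho(-1)$ true.

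The gap is Step 3. All the content lies there, and you only describe what ``should'' happen. Grant the identification of the Lapid--Rallis factor with $\gamma(s,\varphi\otimes\chi,\psi)$, and put $\chi=\mathbf 1$, $s=\tfrac12$. You then obtain only
\[
\omega_\pi(-1)\cdot c(\pi)=\epsilon(\tfrac12,\varphi,\psi).
\]
Here $c(\pi)$ is everything else in the normalized doubling factor at the centre. It is the central value of the unnormalized factor defined by the functional equation of the zeta integrals (the scalar through which the normalized intertwining operator acts), times the explicit normalizing factor. Your remark that $L(s,\varphi)/L(1-s,\varphi)$ cancels concerns the Galois side only. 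It says nothing about $c(\pi)$, which a priori depends on $\pi$.

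To isolate $\omega_\pi(-1)$ you need two further steps:
\begin{enumerate}
\item Show that $c(\pi)$ is independent of $\pi$. This is automatic if the normalized intertwining operator acts by a scalar on the degenerate principal series $I(0,\mathbf 1)$ of the doubled group, so the reducibility of $I(0,\mathbf 1)$ has to be examined.
\item Fix that constant on a test representation. For example, take a principal series induced from unitary characters $\mu_i$ (for non-split $\SO(V)$, together with the trivial character of the anisotropic torus). By your Step 1, both $\omega_\pi(-1)$ and the right-hand side equal $\prod_i\mu_i(-1)$ there.
\end{enumerate}
Without these steps the comparison determines $\omega_\pi(-1)$ only up to an unknown, possibly $\pi$-dependent, sign.

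The even orthogonal case has the same gap. It also rests on the unproved claim that a Weil constant of $V$ enters the doubling factor and equals $\epsilon(\tfrac12,\det\varphi,\psi)$. No argument is given, and the doubled space $V\oplus(-V)$ is split. Once the constant-plus-test-case argument is in place, this bookkeeping is unnecessary.

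Your last step, citing \cite[\S8]{GR10} and \cite[\S 10]{GGP12} ``to conclude'', is circular. That citation is exactly the paper's justification of this proposition, and checks on $\SL_2$ and split $\SO_4$ do not replace it.

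For comparison, the paper gives no argument of its own. The content behind its citation is arithmetic rather than analytic. $\omega_\varphi$ is essentially the Gross--Reeder character of $Z(F)=\{\pm1\}$ attached to the obstruction to lifting $\varphi$ through the spin cover of the dual group, i.e.\ a second Stiefel--Whitney class $w_2(\varphi)$. Deligne's theorem on orthogonal root numbers gives $w_2(\varphi)=\epsilon(\varphi,\psi)/\epsilon(\det\varphi,\psi)$, with $\det\varphi=\mathbf 1$ for $\Sp(V)$. Xu's results then identify $\omega_\varphi$ with the actual central character on each $L$-packet. Your doubling route could give an independent representation-theoretic proof, but only once the missing central-value step is supplied.
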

We remark that in both cases, $\omega_{\varphi}(-1)$ is independent of the choice of $\psi$ (see \cite[Proposition 5.1(1)]{GGP12}). We recall the definition of the local root number $\epsilon(\varphi,\psi)$ for a representation $\varphi$ of the Weil-Deligne group $W_F\times \SL_2(\BC)$.

\begin{defn}\cite[\S 5]{GGP12}
For a representation $\varphi$ of $W_F \times \SL_2(\BC)$, we decompose
\[ \varphi= \sum_{i \geq 1} \rho_i \otimes S_{i}.  \]
Then we define
\[ \epsilon(\varphi, \psi):= \prod_{i \geq 1} \epsilon_L(\rho_i,\psi)^{i} \det(-Fr|_{\rho_i^{I}})^{n-1}, \]
where $\epsilon_L(\rho_i, \psi)$ is the local constant in \cite[(3.6.1)]{Tat79}, and $Fr$ is the Frobenius element in $W_F$.
\end{defn}

For each representation $\varphi$ of $W_F \times \SL_2(\BC)$, we may associate a representation $\lambda_{\varphi}$ of $W_F$ by
\[ \lambda_\varphi (w)= \varphi \left( w, \begin{pmatrix} |w|^{1/2} & \\ & |w|^{-1/2} \end{pmatrix}  \right). \]
When $\varphi$ is an $L$-parameter of a group $\RG(V)$, then $\lambda_{\varphi}$ is an \emph{infinitesimal parameter}. We compare $\epsilon(\varphi, \psi)$ and $\epsilon(\lambda_{\varphi}\otimes S_1, \psi)$ when $\varphi$ is orthogonal in the following lemma.

\begin{lem}\label{lem epsilon}
Suppose $\varphi$ is an orthogonal representation of $W_F \times \SL_2(\BC)$. Then we have 
\[ \epsilon(\varphi,\psi)= \epsilon(\lambda_{\varphi}\otimes S_1, \psi). \]
\end{lem}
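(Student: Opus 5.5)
We need to prove: for orthogonal $\varphi$, $\epsilon(\varphi,\psi)=\epsilon(\lambda_\varphi\otimes S_1,\psi)$. The plan is to reduce to irreducible constituents and then use the standard formula for the epsilon factor of $\rho\otimes S_a$.

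\textbf{Step 1: decompose and compute $\lambda_\varphi$.} Decompose $\varphi=\bigoplus_a \rho_a\otimes S_a$. Both sides of the claimed identity are multiplicative in direct sums, since $\lambda_{\varphi\oplus\varphi'}=\lambda_\varphi\oplus\lambda_{\varphi'}$. For a single summand we have
\[
\lambda_{\rho\otimes S_a}=\bigoplus_{j=0}^{a-1}\rho|\cdot|^{\frac{a-1}{2}-j}.
\]

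\textbf{Step 2: the one-summand formula.} The standard formula (Tate, GGP \S5) is
\[
\epsilon(\rho\otimes S_a,\psi)=\prod_{j=0}^{a-1}\epsilon_L\bigl(\rho|\cdot|^{\frac{a-1}{2}-j},\psi\bigr)\cdot\det\bigl(-Fr\,|\,\rho^{I}\bigr)^{a-1}\cdot(\text{correction}),
\]
where the correction is the ratio of $L$-factors, and only the inertia-fixed part contributes to it. Comparing this with $\epsilon(\lambda_\varphi\otimes S_1,\psi)=\prod_j\epsilon_L(\rho|\cdot|^{\frac{a-1}{2}-j},\psi)$, the discrepancy for one summand is
\[
\det\bigl(-Fr\,|\,\rho^{I}\bigr)^{a-1}\prod_{j=1}^{a-1}\frac{L\bigl(\cdot,\rho^{\vee}|\cdot|^{\ast}\bigr)}{L\bigl(\cdot,\rho|\cdot|^{\ast}\bigr)}
\]
evaluated at $s=1/2$. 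This is a Deligne-style monodromy correction, built from $\det(-Fr\cdot q^{-s}\,|\,\rho^I)$-type terms. It is trivial unless $\rho^I\neq 0$; the $\epsilon_L$ factors also involve $q^{-s}$ shifts via $\epsilon_L(\rho|\cdot|^{t},\psi)=\epsilon_L(\rho,\psi)q^{-t\,c}$, which must be tracked together with it.

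\textbf{Step 3: use orthogonality to cancel the discrepancy.} Pair $\rho\otimes S_a$ with its dual $\rho^\vee\otimes S_a$. Since $\varphi$ is orthogonal, either $\rho\simeq\rho^\vee$ with a sign compatible with $a$, or $\rho\otimes S_a$ and $\rho^\vee\otimes S_a$ appear together.
\begin{itemize}
\item In the paired case, the discrepancy terms multiply to $\det(-Fr\,|\,\rho^I)^{a-1}\det(-Fr\,|\,\rho^{\vee I})^{a-1}=\det(\rho^I(Fr))^{0}$-type expressions. Concretely, $\det(-Fr|V)\det(-Fr|V^\vee)=1$.
\item In the self-dual case, the discrepancy is the determinant of $-Fr$ on $\rho^I$ raised to $a-1$. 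When $\rho$ is symplectic, $a$ is even; the fixed space $\rho^I$ is a symplectic $Fr$-module, so $\det(-Fr)=1$. When $\rho$ is orthogonal, $a$ is odd, so the exponent $a-1$ is even and $\det(-Fr\,|\,\rho^I)=\pm1$ squares away.
\end{itemize}
The shifts $q^{-tc}$ coming from the $\epsilon_L$ factors cancel because the exponents $t=\frac{a-1}{2}-j$ are symmetric about $0$.

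\textbf{Main obstacle.} The delicate step is getting exactly the right monodromy correction term: the Deligne formula relating $\epsilon(\rho\otimes S_a)$ to the product of $\epsilon_L$ values, including the $L$-factor ratio at $s=1/2$. The risk is that $\det(-Fr\,|\,\rho^I)^{a-1}$ in the definition is exactly this correction, so that the statement reduces to an identity among $\epsilon_L$'s. I would first verify the unramified-character case $\rho=\chi$ by hand, then use the symmetric pairing argument above to finish.
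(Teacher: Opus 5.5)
Your Steps 1 and 3 are essentially the paper's proof. You reduce to a self-dual orthogonal block $\rho\otimes S_a$ or a dual pair $\rho\otimes S_a\oplus\rho^\vee\otimes S_a$. You then use the symmetry of the exponents $t_j=\frac{a-1}{2}-j$, i.e. $\epsilon_L(\rho|\cdot|^{t},\psi)\epsilon_L(\rho|\cdot|^{-t},\psi)=\epsilon_L(\rho,\psi)^2$, to get $\prod_j\epsilon_L(\rho|\cdot|^{t_j},\psi)=\epsilon_L(\rho,\psi)^a$. Finally, you kill $\det(-Fr\,|\,\rho^I)^{a-1}$ by parity and symplecticity in the self-dual case, and by $\det(-Fr\,|\,V)\det(-Fr\,|\,V^\vee)=1$ in the paired case.

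The one thing to correct is Step 2: there is no additional ratio of $L$-factors. The definition recalled just before the lemma gives
\[
\epsilon(\rho\otimes S_a,\psi)=\epsilon_L(\rho,\psi)^a\det(-Fr\,|\,\rho^I)^{a-1},\qquad \epsilon(\lambda_{\rho\otimes S_a}\otimes S_1,\psi)=\prod_j\epsilon_L(\rho|\cdot|^{t_j},\psi).
\]
So the factor $\det(-Fr\,|\,\rho^I)^{a-1}$ already is Deligne's monodromy correction evaluated at $s=\tfrac12$; the powers of $q$ in that correction cancel by the same symmetry of exponents, exactly as you suspected in your last paragraph. As literally written, your "discrepancy" carries an extra $L$-ratio that Step 3 never shows to be $1$. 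Simply delete it: the comparison is then just the two displayed expressions, Step 3 disposes of both differences, and the hand check of unramified characters is unnecessary.
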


\begin{proof}
 If $\rho \otimes S_{n}$ is an irreducible subrepresentation of $\varphi$, then either $\rho\otimes S_n$ is orthogonal and self-dual, or $\rho^{\vee} \otimes S_n$ is also a subrepresentation of $\varphi$. Therefore, we may decompose $\varphi$ as follows
 \[ \varphi= \bigoplus_{i \in I_{bp}} (\rho_i  \otimes S_{n_i} \oplus \rho_i^{\vee} \otimes S_{n_i}) \bigoplus_{i \in I_{gp}} \rho_i \otimes S_{n_i}, \]
 where $\rho_i \otimes S_{n_i}$ are all self-dual and orthogonal for all $i \in I_{gp}$.  As a consequence, it suffices to consider the following cases.
 \begin{enumerate}
     \item [(a)] $\varphi= \rho\otimes S_n$.
     \item [(b)] $\varphi=\rho  \otimes S_{n} \oplus \rho^{\vee} \otimes S_{n}$.
 \end{enumerate}
 For Case (a), we have by definition that
\[ \epsilon(\rho \otimes S_{n}, \psi)= \epsilon_L(\rho,\psi)^{n} \det( -Fr |_{\rho^{I}} )^{n-1}=\epsilon(\rho,\psi)^{n}. \]
The last equality follows from the fact that $n-1$ is odd if and only if $\rho$ is symplectic, and hence $ \det(-Fr|_{\rho^I})=1$.

On the other hand, we have
\[ \lambda_{\rho \otimes S_n}= \rho|\cdot|^{\frac{n-1}{2}}\oplus \rho|\cdot|^{\frac{n-3}{2}} \oplus \cdots \oplus \rho |\cdot|^{\frac{1-n}{2}}, \]
and hence
\begin{align*}
    \epsilon( \lambda_{\rho \otimes S_n}\otimes S_1, \psi)= \epsilon(\rho,\psi)^{n} 
\end{align*}
based on the general fact that 
\begin{align}\label{eq epsilon_L}
     \epsilon_L(\rho |\cdot|^{s},\psi) \epsilon_L(\rho|\cdot|^{-s},\psi)= \epsilon_L(\rho,\psi)^2
\end{align}
for general representation $\rho$ and $s \in \BC$ (see \cite[(3.6.5)]{Tat79}). This proves Case (a). Case (b) follows from the same computation using \eqref{eq epsilon_L} and
\[ \det(-Fr|_{\rho^{I} \oplus (\rho^{\vee})^{I}})=1. \]
This completes the proof of the lemma.
\end{proof}

\begin{prop}\label{prop central characters}
Let $\RG$ be the group $\Sp(V)$ or $\SO(V)$, where $\dim(V)$ is even. Suppose $\varphi_1$ and $\varphi_2$ are $L$-parameters $G$ such that $\lambda_{\varphi_1}=\lambda_{\varphi_2}$. Then we have $\omega_{\varphi_1}= \omega_{\varphi_2}$. In particular, all representations in any given local Arthur packet of $\RG$ share the same central character. 
\end{prop}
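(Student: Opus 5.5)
The plan is to reduce everything to the value $\omega_\varphi(-1)$ and then use Lemma \ref{lem epsilon} to express that value in terms of the infinitesimal parameter alone.

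\textbf{Reduction to $\omega_\varphi(-1)$.} When $\dim(V)=2n$ with $n\ge 2$, the center of $G$ is $\{\pm1\}$. So the central character $\omega_\varphi$ is determined by $\omega_\varphi(-1)$, and it suffices to show $\omega_{\varphi_1}(-1)=\omega_{\varphi_2}(-1)$. The small-rank cases ($n=1$, where $\SO(V)$ is a torus) will be handled separately or excluded as degenerate. In those cases one can argue directly: there the $L$-parameter is a sum of characters, so $\varphi=\lambda_\varphi\otimes S_1$ up to conjugacy, and nothing is needed.

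\textbf{The symplectic case.} For $\RG=\Sp(V)$, the dual group is $\SO_{2n+1}(\BC)$, so $\varphi_i$ is an orthogonal representation. Proposition \ref{prop GGP12}(1) gives $\omega_{\varphi_i}(-1)=\epsilon(\varphi_i,\psi)$. Lemma \ref{lem epsilon} then gives $\epsilon(\varphi_i,\psi)=\epsilon(\lambda_{\varphi_i}\otimes S_1,\psi)$, and the right-hand side depends only on $\lambda_{\varphi_i}$. Since $\lambda_{\varphi_1}=\lambda_{\varphi_2}$, the two values agree.

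\textbf{The even orthogonal case.} For $\RG=\SO(V)$, $\varphi_i$ is orthogonal of dimension $2n$. The first point is that $\det\varphi_i$ depends only on $\lambda_{\varphi_i}$. To see this, restrict $\det\varphi$ to $W_F$, where it is trivial on $\SL_2$; it equals $\det\lambda_\varphi$, because $\det S_n$ restricted to the diagonal torus is trivial. Hence $\epsilon(\det\varphi_i,\psi)$ agrees for $i=1,2$. Combining this with Lemma \ref{lem epsilon} and Proposition \ref{prop GGP12}(2) gives the equality of $\omega_{\varphi_i}(-1)$. The case $\dim V$ odd ($\SO_{2n+1}$) is not covered by the statement, and in any case its center is trivial.

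\textbf{Local Arthur packets.} Let $\pi\in\wt\Pi_\psi$. By Mœglin's construction, $\pi$ has an $L$-parameter $\varphi_\pi$ whose infinitesimal parameter equals $\lambda_{\phi_\psi}$. This is the standard fact that all members of an Arthur packet share the infinitesimal character $\lambda_{\phi_\psi}$, which one sees, for example, from the character identity in Theorem \ref{arthurclassification} and the Bernstein center, or from Mœglin's explicit construction via Jacquet modules.

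Then $\lambda_{\varphi_\pi}=\lambda_{\phi_\psi}$ for every $\pi\in\wt\Pi_\psi$. By the first part of the proposition, $\omega_{\varphi_\pi}=\omega_{\phi_\psi}$. Finally, $\omega_\pi=\omega_{\varphi_\pi}$ by the discussion preceding the proposition, which uses \cite{GR10, Xu16}.

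\textbf{Main obstacle.} The step I expect to be hardest is justifying that every member of $\wt\Pi_\psi$ has infinitesimal parameter $\lambda_{\phi_\psi}$. To establish it, I would first try to invoke Mœglin's construction as refined by Atobe, in which each member is built from the same cuspidal support.
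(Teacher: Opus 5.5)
Your proposal is correct and follows the paper's proof. The first part combines Proposition \ref{prop GGP12} with Lemma \ref{lem epsilon}. For the Arthur-packet part, the paper simply cites M{\oe}glin's result \cite[Proposition 4.1]{Moe09b} that all members of a packet share the infinitesimal parameter, which settles the step you flagged as the main obstacle. The only minor difference is the determinant step: you get $\det\varphi_1=\det\varphi_2$ from the direct computation $\det\varphi|_{W_F}=\det\lambda_\varphi$, whereas the paper notes that $\det\varphi_i$ is fixed by $\disc(V)$ via \cite[Theorem 8.1 (i)]{GGP12}; both work.
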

\begin{proof}
In either case, $\varphi_1, \varphi_2$ are orthogonal representations of $W_F \times \SL_2(\BC)$. Moreover, when $\RG=\SO(V)$, we have $\det (\varphi_1)= \det(\varphi_2)$, which is determined by $\disc(V)$ (See \cite[Theorem 8.1 (i)]{GGP12}). Therefore, Proposition \ref{prop GGP12} implies that $\omega_{\varphi_1}=\omega_{\varphi_2}$ if and only if $\epsilon(\varphi_1,\psi)= \epsilon(\varphi_2, \psi)$, which is shown by Lemma \ref{lem epsilon}.

Since M{\oe}glin proved that all representations in a fixed local Arthur packet share the same infinitesimal character (see \cite[Proposition 4.1]{Moe09b}), the second part follows. This completes the proof of the proposition.
\end{proof}


\begin{thebibliography}{dihuajiang}
\bibitem[Ach03]{Ach03}
P. Achar,
An order-reversing duality map for conjugacy classes in Lusztig's canonical quotient.
{\it Transform. Groups} \textbf{8} (2003), no. 2, 107--145.
begin 

\bibitem[AC26]{AC26}
{H. Atobe and D. Ciubotaru,
Endoscopic transfer and the wavefront upper bound conjecture.
Preprint. 2026.
} 

\bibitem[Art13]{Art13}
J. Arthur,
{\it The endoscopic classification of representations: Orthogonal and Symplectic groups.}
{\it Colloquium Publication}
Vol. \textbf{61}, 2013,
American Mathematical Society.

\bibitem[Ato20]{Ato20} H. Atobe, Construction of local A-packets. {\em J. Reine Angew. Math.}. \textbf{790} pp. 1-51 (2022).

\bibitem[Ato23]{Ato23} H. Atobe, The set of local A-packets containing a given representation. {\em J. Reine Angew. Math.}. \textbf{804} pp. 263-286 (2023).

\bibitem[AM20]{AM20} H. Atobe and A. M{\'i}nguez, The explicit Zelevinsky-Aubert duality. {\em Compos. Math.}. \textbf{159}, 380-418 (2023).


\bibitem[BV85]{BV85}
D. Barbasch and D. Vogan,
 Unipotent representations of complex semisimple groups.
{\it Ann. of Math.} (2) \textbf{121} (1985), no. 1, 41--110.

\bibitem[BZ77]{BZ77}
I.N. Bernstein and A.V.  Zelevinski, 
 Induced representations of reductive p-adic groups I.
 {\it Ann. Scient. Ec. Norm. Sup.}, 10(4):441--472, 1977.



\bibitem[CFK18]{CFK18}
Y. Cai, S. Friedberg and E. Kaplan,
 Doubling constructions: local and global theory, with an application to global functoriality for non-generic cuspidal representations. 
Preprint.2018. arXiv:1802.02637.


\bibitem[CMBO21]{CMBO21} D. Ciubotaru, L. Mason-Brown, and E. Okada,
Wavefront Sets of Unipotent Representations of Reductive p-adic Groups I. American Journal of Mathematics (to appear).

\bibitem[CMBO24]{CMBO24} D. Ciubotaru, L. Mason-Brown, and E. Okada, Some unipotent Arthur
packets for reductive p-adic groups. {\em Int. Math. Res. Not. IMRN.} IMRN 2024, no. 9, 7502-7525.

\bibitem[CMBO25]{CMBO25} D. Ciubotaru, L. Mason-Brown, and E. Okada,
Wavefront Sets of Unipotent Representations of Reductive p-adic Groups II. {\em J. Reine Angew. Math.} {\bf 823} (2025), 191--253.

\bibitem[Clo87]{Clo87}
L. Clozel, 
 Characters of nonconnected, reductive p-adic groups.
{\it Canad. J. Math.} 39 (1987), no. 1, 149--167.

\bibitem[CKPSS04] {CKPSS04}
J. Cogdell, H. Kim, I. Piatetski-Shapiro and F. Shahidi,
 Functoriality for the classical groups.
{\it Publ. Math. Inst. Hautes Etudes Aci.} No. 99 (2004),
163--233.

\bibitem[CPSS11] {CPSS11}
J. Cogdell, I. Piatetski-Shapiro and F. Shahidi,
 {\it Functoriality for the quasisplit classical groups}. {\it On certain L-functions}, 117--140, Clay Math. Proc., 13, Amer. Math. Soc., Providence, RI, 2011.

\bibitem[CM93]{CM93}
D. Collingwood and W. McGovern,
{\it Nilpotent orbits in semisimple Lie algebras.}
Van Nostrand Reinhold Mathematics Series. Van Nostrand Reinhold Co., New York, 1993. xiv+186 pp.


\bibitem[GGP12]{GGP12} Gan, W., Gross, B. \& Prasad, D. Symplectic local root numbers, central critical L values, and restriction problems in the representation theory of classical groups. {\em Astérisque}. pp. 1-109 (2012), Sur les conjectures de Gross et Prasad.

\bibitem[GGP20]{GGP20} W. T. Gan, B. H. Gross and D. Prasad, Branching laws for classical groups: The non-tempered case, Compos. Math. 156(11) (2020), 2298–2367.





%







\bibitem[GRS11]{GRS11}
D. Ginzburg, S. Rallis and D. Soudry,
{\it The descent map from automorphic representations of {${\rm GL}(n)$} to classical groups.} World Scientific, Singapore, 2011. v+339 pp.

\bibitem[GGS17]{GGS17}
R. Gomez, D. Gourevitch and S. Sahi,
 Generalized and degenerate Whittaker models.
{\it Compositio Math.} \textbf{153} (2017) 223–-256.

\bibitem[GGS21]{GGS21}
R. Gomez, D. Gourevitch and S. Sahi,
 Whittaker supports for representations of reductive groups. 
{\it Annales de l'Institut Fourier}, Volume 71 (2021) no. 1, pp. 239--286.

\bibitem[GR10]{GR10}Gross, B. \& Reeder, M. Arithmetic invariants of discrete Langlands parameters. {\em Duke Math. J.} \textbf{154}, 431--508 (2010).

\bibitem[HC78]{HC78}
Harish-Chandra, 
{\it Admissible invariant distributions on reductive p-adic groups.} Lie theories and their applications (Proc. Ann. Sem. Canad. Math. Congr., Queen's Univ., Kingston, Ont., 1977), pp. 281--347. Queen's Papers in Pure Appl. Math., No. 48, Queen's Univ., Kingston, Ont., 1978.

\bibitem[HLL22]{HLL22}
A. Hazeltine, B. Liu, and C. Lo,
On the intersection of local Arthur packets for classical groups.
Preprint. 2022. arXiv:2201.10539.

\bibitem[HLLS24]{HLLS24}
A. Hazeltine, B. Liu, C. Lo, and F. Shahidi,
On the upper bound of wavefront sets of representations of $p$-adic groups 
Submitted. 2024.  

\bibitem[HLLZ22]{HLLZ22} A. Hazeltine, B. Liu, C. Lo, and Q. Zhang,
The closure ordering conjecture on local $L$-parameters in local Arthur packets of classical groups. (2022), Preprint.


\bibitem[JL14]{JL14}
C. Jantzen and B. Liu,
 The generic dual of p-adic split $\mathrm{SO}_{2n}$ and local Langlands parameters. 
{\it Israel J. Math.} 204 (2014), no. 1, 199--260.

\bibitem[JL22]{JL22}
C. Jantzen and B. Liu,
 The generic dual of p-adic groups and Local Langlands parameters.
Preprint. 2022.  

\bibitem[Jia14]{Jia14}
D. Jiang,
{\it Automorphic Integral transforms for classical groups I: endoscopy correspondences}.
{\it Automorphic Forms: L-functions and related geometry: assessing the legacy of I.I. Piatetski-Shapiro}, 179--242,
{\it Comtemp. Math.} \textbf{614}, 2014, AMS.




\bibitem[JLS16]{JLS16}
D. Jiang, B. Liu and G. Savin,
Raising nilpotent orbits in wave-front sets.
{\it Representation Theory} 20 (2016), 419--450.


\bibitem[JNQ10]{JNQ10}
D. Jiang, C. Nien and Y. Qin,
Symplectic supercuspidal representations of $GL(2n)$ over $p$-adic fields. 
{\it Pacific J. Math.} 245 (2010), no. 2, 273--313. 

\bibitem[JS04]{JS04}
D. Jiang and D. Soudry,
{\it Generic representations and local Langlands reciprocity law for p-adic $\SO_{2n+1}$.} {\it Contributions to automorphic forms, geometry, and number theory}, 457--519, Johns Hopkins Univ. Press, Baltimore, MD, 2004.

\bibitem[JS12]{JS12}
D. Jiang and D. Soudry,
Appendix: On the local descent from GL(n) to classical groups [appendix to MR2931222]. {\it Amer. J. Math.} 134 (2012), no. 3, 767--772.





\bibitem[KK04]{KK04}
H. H. Kim and M. Krishnamurthy, 
{\it Base change lift for odd unitary groups}, Functional analysis VIII, Various Publ. Ser. (Aarhus), vol. 47, Aarhus Univ., Aarhus, 2004, pp. 116--125.

\bibitem[KK05]{KK05}
H. H. Kim and M. Krishnamurthy, 
 Stable base change lift from unitary groups to GLn,
{\it IMRP Int. Math. Res. Pap.} 1 (2005), 1--52.

\bibitem[Kon02]{Kon02}
T. Konno, 
 Twisted endoscopy and the generic packet conjecture.
{\it Israel J. Math.} 129 (2002), 253--289. 

\bibitem[KS99]{KS99}
R. E. Kottwitz and D. Shelstad, 
{\it Foundations of twisted endoscopy.}
Ast{\'e}risque No. 255 (1999), vi+190 pp.








\bibitem[LS87]{LS87}
R. Langlands and D. Shelstad,
On the definition of transfer factors.
{\it Math. Ann.} 278 (1987), 219-271. 

\bibitem[Liu11]{Liu11}
B. Liu,
Genericity of representations of p-adic $Sp_{2n}$ and local Langlands parameters.
{\it Canad. J. Math.} \textbf{63} (2011), 1107--1136.


\bibitem[Moe96]{Moe96}
C. M{\oe}glin, 
Front d\'onde de\'s repr\'esentations des groupes classiques p-adiques (French,
with French summary), {\it Amer. J. Math.} 118 (1996), no. 6, 1313--1346.






\bibitem[Moe06a]{Moe06a}
{C. M{\oe}glin}, 
Paquets d'Arthur pour les groupes classiques; point de vue combinatoire.
arXiv:math/0610189v1. 

\bibitem[Moe06b]{Moe06b}
{C. M{\oe}glin}, 
Sur certains paquets d'Arthur et involution d'Aubert-Schneider-Stuhler g{\'e}n{\'e}ralis{\'e}e. 
\emph{Represent. Theory}  {\bf10}, (2006), 86--129.

\bibitem[Moe09]{Moe09}
{C. M{\oe}glin}, 
\emph{Paquets d'Arthur discrets pour un groupe classique $p$-adique.} 
{\it Automorphic forms and L-functions II. Local aspects}, 179--257, 
\emph{Contemp. Math.}, {\bf489}, \emph{Israel Math. Conf. Proc.}, 
{\it Amer. Math. Soc., Providence, RI}, 2009.


\bibitem[M{\oe}09b]{Moe09b}
C. M{\oe}glin, Comparaison des paramètres de Langlands et des exposants à l'intérieur d'un paquet d'Arthur. {\em J. Lie Theory}. \textbf{19}, 797-840 (2009).



\bibitem[Moe10]{Moe10}
{C. M{\oe}glin}, 
Holomorphie des op{\'e}rateurs d'entrelacement normalis{\'e}s {\`a} l'aide des param{\`e}tres d'Arthur. 
\emph{Canad. J. Math.} {\bf62} (2010), no.~6, 1340--1386.

\bibitem[Moe11]{Moe11}
{C. M{\oe}glin}, 
\emph{Multiplicit{\'e} $1$ dans les paquets d'Arthur aux places $p$-adiques.} 
\emph{On certain $L$-functions}, 333--374, 
\emph{Clay Math.~Proc.,} {\bf13}, {\it Amer.~Math.~Soc., Providence, RI,} 2011.

\bibitem[MW87]{MW87}
C. M{\oe}glin and J.-P. Waldspurger,
 Modèles de Whittaker dégénérés pour des groupes p-adiques.
{\it Math. Z.} \textbf{196} (1987), no. 3, 427--452.



\bibitem[Mok15]{Mok15}
C. Mok,
{\it Endoscopic classification of representations of quasi-split unitary groups.} 
Mem. Amer. Math. Soc. 235 (2015), no. 1108, vi+248 pp. 







\bibitem[Oka21]{Oka21}
E. T. Okada,
The wavefront set of spherical Arthur
representations. Preprint. 2021. arXiv:2107.10591. 




\bibitem[Sha90]{Sha90}
F. Shahidi,
A proof of Langlands' conjecture on Plancherel measures; complementary series for p-adic groups.
{\it Ann. of Math.} (2) 132 (1990), no. 2, 273--330.


\bibitem[Sha11]{Sha11}
F. Shahidi, 
Arthur packets and the Ramanujan conjecture. {\it Kyoto J. Math.} 51 (2011), no. 1, 1–23.


\bibitem[ST15]{ST15}
D. Soudry and Y. Tanay,
On local descent for unitary groups.
{\it J. Number Theory} 146 (2015), 557--626.



\bibitem[Tat79] {Tat79} 
J. Tate,
{\it Number theoretic background}, Automorphic
forms, representations and $L$-functions, Proc. Sympos. Pure Math.,
33, Part 2, pp. 3--26, Amer. Math. Soc., Providence, R. I., 1979.

\bibitem[Tsa24]{Tsa24} Cheng-Chiang Tsai. Geometric wave-front set may not be a singleton. J. Amer. Math. Soc.   37 (2024), no. 1, 281--304.

\bibitem[Var14]{Var14}
S. Varma,
On a result of Moeglin and Waldspurger in residual characteristic 2, 
{\it Math. Z.} 277, no. 3--4,
1027--1048 (2014).

\bibitem[Var17]{Var17}
S. Varma,
On descent and the generic packet conjecture. 
{\it Forum Math.} 29 (2017), no. 1, 111--155. 

\bibitem[Wal01]{Wal01}
J.-L. Waldspurger,
{\it Int\'egrales orbitales nilpotentes et endoscopie pour les groupes classiques non ramifi\'es}. Ast\'erisque \textbf{269}, 2001.

\bibitem[Wal18]{Wal18}
J.-L. Waldspurger,
Repr\'esentations de r\'eduction unipotente pour $SO_{2n+1}$, III: exemples de fronts d\'onde. 
{\it Algebra Number Theory} 12.5 (2018),
pp. 1107--1171.

\bibitem[Wal20]{Wal20}
J.-L. Waldspurger, 
Fronts d\'onde des repr\'esentations temp\'er\'ees et de
r\'eduction unipotente pour SO(2n + 1). {\it Tunis. J. Math.}, 2(1):43--95, 2020.

\bibitem[Xu16]{Xu16} B. Xu,
On a lifting problem of L-packets. 
{\it Compos. Math.} 152 (2016), no. 9, 1800–1850.

\bibitem[Xu17]{Xu17} B. Xu, On Mœglin's parametrization of Arthur packets for p-adic quasisplit Sp(N) and SO(N). {\em Canad. J. Math.}. \textbf{69}, 890-960 (2017).




\end{thebibliography}
\end{document}